\theoremstyle{plain}
\theoremstyle{plain}
\newtheorem{theorem}{Theorem}[section]
\newtheorem{proposition}[theorem]{Proposition}
\theoremstyle{definition}
\newtheorem{definition}[theorem]{Definition}
\theoremstyle{remark}
\newtheorem{remark}[theorem]{Remark}
\theoremstyle{definition}
\theoremstyle{remark}
\numberwithin{equation}{section}
\newcommand{\myequation}{\begin{equation}}
\newcommand{\myendequation}{\end{equation}}
\newlist{steps}{enumerate}{2}
\setlist[steps, 1]{wide=0pt, leftmargin=0cm, itemindent=15pt, listparindent=\parindent, label=\ul{\ul{\itshape Step \arabic*}}, ref={\itshape Step~\arabic*}}
\setlist[steps, 2]{wide=0pt, leftmargin=0cm, itemindent=20pt, listparindent=\parindent, label=\ul{\itshape Substep \arabic{stepsi}.\arabic*}, ref={\itshape Substep~\arabic{stepsi}.\arabic*}}
\newcommand{\R}{\mathbb{R}}
\newcommand{\N}{\mathbb{N}}
\mathchardef\emptyset="001F
\renewcommand{\d}{\mathrm{d}}
\newcommand{\de}{\partial}
\newcommand{\e}{\varepsilon}
\renewcommand{\tilde}{\widetilde}
\newcommand{\x}{{\times}}
\newcommand{\ul}{\underline}
\newcommand{\Tr}{\mathrm{Tr}}
\newcommand{\Lap}{\Delta}
\newcommand{\A}{\mathscr{A}} 
\renewcommand{\L}{\mathscr{L}}
\DeclareMathOperator{\Lip}{Lip}
\newcommand{\mynorm}{{\vert\kern-0.25ex\vert\kern-0.25ex\vert}}
\newcommand{\ie}{{\itshape i.e.}}
\newcommand{\eg}{{\itshape e.g.}}
\author[G. M. Coclite]{Giuseppe Maria Coclite}
\address[G. M. Coclite]{Politecnico di Bari, Dipartimento di Meccanica, Matematica e Management, Via E. Orabona 4, 70125 Bari,  Italy.}
\email[]{giuseppemaria.coclite@poliba.it}
\author[N. De Nitti]{Nicola De Nitti}
\address[N. De Nitti]{EPFL, Institut de Mathématiques, Station 8, 1015 Lausanne, Switzerland.}
\email{nicola.denitti@epfl.ch}
\author[F. Maddalena]{Francesco Maddalena}
\address[F. Maddalena]{Politecnico di Bari, Dipartimento di Meccanica, Matematica e Management, Via E. Orabona 4, 70125 Bari,  Italy.}
\email[]{francesco.maddalena@poliba.it}
\author[G. Orlando]{Gianluca Orlando}
\address[G. Orlando]{Politecnico di Bari, Dipartimento di Meccanica, Matematica e Management, Via E. Orabona 4, 70125 Bari,  Italy.}
\email[]{gianluca.orlando@poliba.it}
\author[E. Zuazua]{Enrique Zuazua}
\address[E. Zuazua]{Friedrich-Alexander-Universit\"at Erlangen-N\"urnberg, Department of Data Science, Chair for Dynamics, Control and Numerics (Alexander von Humboldt Professorship), Cauerstr. 11, 91058 Erlangen, Germany. 
	\newline \indent 	
	Chair of Computational Mathematics, Fundación Deusto,	Avenida de las Universidades, 24, 48007 Bilbao, Basque Country, Spain. 
	\newline \indent 
	Universidad Autónoma de Madrid, Departamento de Matemáticas, Ciudad Universitaria de Cantoblanco, 28049 Madrid, Spain.}
\email[]{enrique.zuazua@fau.de}
\title[]{Exponential convergence to steady-states for trajectories of a damped dynamical system modelling adhesive strings}
\keywords{Damped wave equation; adhesion phenomena; long-time asymptotics; decay.}
\subjclass[2020]{35L71, 35B40, 74H40, 74M15}
\begin{document}

\begin{abstract}
We study the global well-posedness and asymptotic behavior for a semilinear damped wave equation with Neumann boundary conditions, modelling a one-dimensional linearly elastic body interacting with a rigid substrate through an adhesive material.  The key feature of of the problem is that the interplay between the nonlinear force and the boundary conditions allows for a continuous set of equilibrium points. We prove an exponential rate of convergence for the solution towards a (uniquely determined) equilibrium point.
\end{abstract}

\maketitle

\setcounter{tocdepth}{1}
\tableofcontents 

\section{Introduction}
\label{sec:intro}

% \RRR
% \begin{itemize}
%   \item JEE (Gianluca Crippa)?
%   \item JDE (Eduard Feireisl, Roger Temam)?
% \end{itemize}
% \EEE

The qualitative and quantitative  study of the long-time behavior of solutions to dynamical systems has always been a challenging mathematical issue~\cite{Daf78}. The by now deep-rooted machinery for these problems can now deal with models arising from real phenomena. As usual, taking into account relevant physical features exposes to unavoidable additional difficulties. This is the case for the problem studied in this paper. 

Our problem falls into the class of semilinear evolution equations of the form
\[
  \de_{tt}^2 u(t,x) + \de_t u(t,x) - \Delta u(t,x) + f(u(t,x)) = 0 \, .
\]
In particular, we focus on the global well-posedness and the long-time behavior as $t \to +\infty$ of its solutions. More precisely, we intend to prove compactness of the trajectories in a suitable strong sense and deduce a precise estimate on the rate of the convergence to the limit trajectory. 

The peculiar structure of the nonlinearity $f(u)$ treated in the present paper is the novelty and the source of the distinguishing features and the related mathematical difficulties of our problem. These do not arise when polynomial nonlinearities are considered (see~\cite{Zua90, Zua91, Mar00}).  

In modelling the dynamics of an elastic string interacting with a rigid substrate through an adhesive layer, we are lead to analyze a nonlinearity mimicking possible attachment--detachment regimes. To take into account this kind of interactions, one considers a force $f(u)$ which vanishes when $u$ overcomes a critical threshold $u_*$ (see~\cite{MadPerPugTru09, CocFloLigMad17, CocFloLigMad19, CocDevFloLigMad23}). Specifically, we let $f(u) = \Phi'(u)$, where the potential $\Phi(u)$ is depicted in Figure~\ref{fig:Phik};  see~\eqref{def:Phik} for its precise definition.

\begin{figure}[H]
  \centering
  \includegraphics{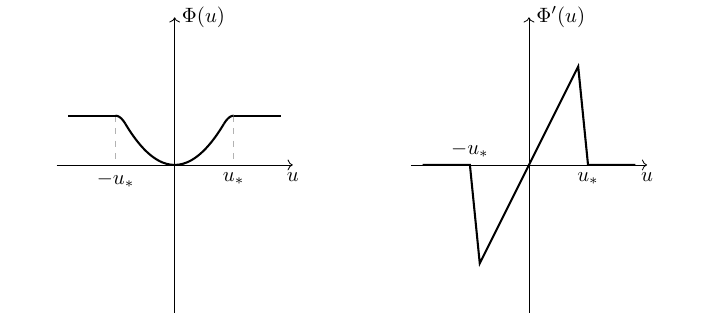}
	\caption{Plot of the potential $\Phi$ and the force $\Phi'$ as functions of $u$.}
	\label{fig:Phik}
\end{figure}

The model under investigation is ruled by the following initial boundary value problem:
\[  \label{intro:PDE}
\begin{cases}
\de^2_{tt} u(t,x) + \de_t u(t,x) -  \de^2_{xx} u(t,x)  + \Phi'(u(t,x)) = 0 \, , & (t,x) \in  (0,+\infty) \x (0,L)   \, , \\
 \de_x u(t,0) = \de_x u(t,L) = 0  \, , & t \in   (0,+\infty)  \, , \\
u(0,x) = u_0(x) \, , \  \de_t u(0,x) = v_0(x) \, , & x \in (0,L)\, .
\end{cases}
\]  
This describes the damped dynamics of a one-dimensional linearly elastic body, whose reference configuration is $(0,L)$, interacting with a rigid substrate through an adhesive material, acting through the force $\Phi'(u)$. If the displacement $u$ is small compared to $u_*$, this force is purely elastic; otherwise, if $|u| \geq u_*$, the adhesive material ceases to act on the elastic body. We stress that the adhesion process described here is reversible. In~\eqref{intro:PDE}, initial and Neumann boundary conditions are imposed.

The attachment--detachment process ruled by the nonlinear force $\Phi'(u)$ induces the natural question about the long-time behavior of such dynamics and, more specifically, whether a convergence to a stationary state occurs or switching between the two states persists. The presence of the damping term in~\eqref{intro:PDE} suggests that the former case should be the reasonable one. We answer this question quantitatively in the present paper as we explain below.

Our first task is to show global well-posedness of~\eqref{intro:PDE} in the energy space. This is achieved in Theorem~\ref{prop:well-posedness approximation initial data in energy space} by standard techniques. Note that the result is, in fact, obtained in arbitrary spatial dimension $d$. 

Our next task is to show convergence of trajectories as $t \to +\infty$. The structure of the dynamical system~\eqref{intro:PDE} suggests to move our steps in the framework of LaSalle's invariance principle (see~\cite{Las67, Hal69} and also~\cite[Theorem 9.2.3]{CazHar}). In this perspective, one expects that $u(t) \to u_\infty$ as $t \to +\infty$, where $u_\infty$ is a solution to the stationary equation 
\[ \label{eqintro:stationary}
  \begin{cases}
     -  \de^2_{xx} u_\infty (x) + \Phi'(u_\infty(x)) = 0 \, , & x \in  (0,L) \, , \\
     \de_x u_\infty(0) = \de_x u_\infty(L) = 0  \, .
    \end{cases}
\]
In fact, the first step in this direction is reached in Proposition~\ref{prop:omega-limit set} (proven in any dimension $d \geq 1$), where we show that the accumulation points of the trajectories $\{u(t)\}_{t \geq 0}$ satisfy~\eqref{eqintro:stationary} via a compactness argument.
We bring the reader's attention to the decisive role played here by the interplay between the set of critical points of $\Phi$ and the kernel of the Laplace operator with Neumann boundary conditions, which makes the arguments intriguing. Indeed, if homogeneous Dirichlet conditions were imposed instead, the stationary problem 
\[ 
  \begin{cases}
     -  \de^2_{xx} u_\infty (x) + \Phi'(u_\infty(x)) = 0 \, , & x \in  (0,L) \, , \\
     u_\infty(0) = u_\infty(L) = 0  \, ,
    \end{cases} 
\]
would admit a unique trivial solution---the constant value 0. The same would occur in the case of Neumann boundary conditions with a force only vanishing at $0$, \eg, 
\[ 
  \begin{cases}
     -  \de^2_{xx} u_\infty (x) + u_\infty |u_\infty|^{p-1} = 0 \, , & x \in  (0,L) \, , \\
     \de_x u_\infty(0) = \de_x u_\infty(L) = 0  \, ,
    \end{cases} \, , \quad p > 1 \, .
\]
In those cases, uniqueness of the accumulation point, combined with the compactness argument, would force the desired convergence; see~\cite{Zua90, Zua91, Mar00} for results in that direction. The same conclusion holds true also in the case of multiple, yet discrete, rest points, as pointed out in~\cite{Dic76, Bal78, Web79, Bal04}. Here, in contrast, the set of solutions to problem~\eqref{eqintro:stationary} is given by constant functions valued in $\{\Phi' = 0\} = (-\infty, -u_*] \cup \{0\} \cup [u_*, +\infty)$, allowing for a continuous set of possible choices for the limit profiles. Therefore, the compactness argument alone does not suffice to infer convergence of the whole trajectory as $t \to +\infty$ to a uniquely determined limit profile. Even when proving this convergence in the case of a continuous set of equilibria is possible, finer techniques are required, see, \eg, \cite{Zua88} for this phenomenon arising in a dissipative system when the Laplacian is replaced by a differential operator with a one-dimensional kernel. Here, in Theorem~\ref{prop:long-time limit}, we accomplish to prove that the initial conditions~\eqref{intro:PDE} enforce the selection of a unique limit profile $u_\infty \in \{\Phi' = 0\}$, thus allowing us to conclude the convergence $u(t) \to u_\infty$ as $t \to +\infty$. The proof of Theorem~\ref{prop:long-time limit} is carried out in any dimension $d \geq 1$ and its main ingredient is the existence of an auxiliary Lyapunov functional, introduced in~\eqref{def:J}, inspired by~\cite{Har86}.

Our last, yet fundamental, task is to quantify the rate of convergence for such asymptotic dynamics. The interplay of nonlinearities and differential operators with a nontrivial kernel might affect the rate of decay of dissipative dynamical systems, as shown in some cases with the existence of slow solutions decaying polynomially fast in~\cite{Har05,GhiGobHar16-2,GhiGobHar16-1}.  In Theorem~\ref{prop:decay-rate}, we prove that this phenomenon is ruled out in the model studied in this paper, by showing that the convergence $u(t) \to u_\infty$ as $t \to +\infty$ occurs in an exponential fashion, \ie, $\|u(t,\cdot) - u_\infty\|_{H^1((0,L))} \leq M e^{- \kappa t}$. The arguments used in the proof rely on the compact embedding $H^1((0,L)) \subset \subset C([0,L])$, true only in dimension $d = 1$. This allows us to single out only one of the two possible attachment--detachment regimes for time large enough, thus allowing us to study separately the decay rate of solutions to the equations 
  \begin{alignat*}{2}
    & \partial_{tt}^2 u(t,x) + \de_t u(t,x) - \de_{xx}^2 u(t,x)  = 0, \qquad && (t,x) \in (0,+\infty) \x (0,L)  \, , \\
    & \partial_{tt}^2 u(t,x) + \de_t u(t,x) - \de_{xx}^2 u  + 2 u(t,x) = 0, \qquad && (t,x) \in (0,+\infty)\x (0,L) \, .
  \end{alignat*}
Trajectories of both dynamics share the exponential decay rate to the equilibrium points. This result is proved through Gr\"onwall type inequalities on suitable perturbations of the energy functionals, in the spirit of~\cite{HarZua88}, giving a positive answer to the question raised in the paper.  

We conclude by commenting on an interesting question concerning the decay rate and the shape of the force $\Phi'(u)$. If the rate of the exponential decay $M e^{-\kappa t}$ is uniform with respect to the slope of the decreasing part of the force $\Phi'(u)$, one can think of treating the degenerate case of discontinuous force, \eg, $\Phi'(u) = 2u$ for $|u| < u_*$ and $\Phi'(u) = 0$ for $|u| \geq u_*$. This models an abrupt  attachment--detachment phenomenon. Answering to this question is not in the scope of the paper. However, in this perspective, we study an ODE related to this phenomenon in Appendix~\ref{app:ODE}, showing that the  uniform exponential decay occurs.

All the precise statements of the results contained in the paper are collected in Section~\ref{sec:main}.

\section{Main results}
\label{sec:main}

In this section we present the model and the main results proven in the paper. 

\subsection{The model} Let $d \geq 1$ be the dimension. We will explicitly declare for which results $d = 1$ is needed. We fix $\Omega \subset \R^d$, a bounded, open, and connected set, additionally with $C^2$ boundary when $d \geq 2$.\footnote{This regularity requirement is motivated by the need to define traces on $\de \Omega$, exploit the compact embedding $H^1(\Omega) \subset L^2(\Omega)$, and apply elliptic regularity theory up to the boundary.} We let $\nu$ denote the exterior normal derivative to $\de \Omega$. 

We give a precise definition of the nonliner term in the equation. Let us fix a threshold $u^* \in (0,+\infty)$ and a potential $\Phi \in C^1(\R;\R)$ defined as follows:
\[ \label{def:Phik}
\Phi(u)  := \begin{dcases}
u^2 \, , & \text{if } |u| \leq u_* - \frac{1}{\sigma} \, , \\
u_*\Big(u_* - \frac{1}{\sigma}\Big) - (u_* \sigma-1) (u_*-u)^2  \, , & \text{if } u_* - \frac{1}{\sigma} \leq u \leq u_* \, , \\
u_*\Big(u_* - \frac{1}{\sigma}\Big) - (u_* \sigma-1) (u_*+u)^2  \, , & \text{if } - u_* \leq u \leq -u_* + \frac{1}{\sigma} \, , \\
u_*\Big(u_* - \frac{1}{\sigma}\Big) \, , & \text{if }  u_* \leq |u| \, ; 
\end{dcases}
\]
see Figure~\ref{fig:Phik}. The specific form of the potential $\Phi$ is not significant; the following properties are the relevant ones: 
\begin{enumerate}[label=(P\arabic*)]
	\item \label{it:p1} $|\Phi| \leq u_*^2$;
	\item \label{it:p2} $|\Phi'| \leq 2$ and $\Phi'$ is globally Lipschitz with $\Lip(\Phi') \leq 2(u_*\sigma-1)$;
	\item \label{it:p3} $\Phi(u) - u^2$ is concave;
	\item \label{it:p5} $u \Phi'(u) \geq 0$.
\end{enumerate}

We study the following initial boundary value problem
\[  \label{eq:PDE}
\begin{cases}
\de^2_{tt} u - \Lap u + \de_t u + \Phi'(u) = 0 \, , & (t,x) \in  (0,+\infty) \x \Omega   \, , \\
\nabla u \cdot \nu = 0  \, , & (t,x) \in   (0,+\infty) \x \de \Omega \, , \\
u(0,x) = u_0(x) \, , \  \de_t u(0,x) = v_0(x) \, , & x \in  \Omega   \, .
\end{cases}
\]

\subsection{Global well-posedness}

To establish the well-posedness result, it is convenient to interpret \eqref{eq:PDE}  as the system of ODEs in $H^1(\Omega) \x L^2(\Omega)$ 
\[   \label{eq:recast PDE full}
\begin{dcases}
\frac{\d u}{\d t}(t) - v(t) = 0 \, , &  t \in (0,+\infty) \, ,  \\
\frac{\d v}{\d t}(t) - \Lap u(t) + v(t) + u(t) - u(t)  + \Phi'(u(t)) =  0 \, , & t \in (0,+\infty)  \, ,   \\
u(0) = u_0 \, , \  v(0) = v_0 \, .
\end{dcases}
\] 

In Section~\ref{sec:wp-approx} we prove the following result.

\begin{theorem}[Well-posedness with initial  data in the energy space]
	\label{prop:well-posedness approximation initial data in energy space}
	Let $d \geq 1$. Let $U_0 = (u_0,v_0)$ and assume that 
	\[ \label{eq:initial data in energy space}
	u_0 \in H^1(\Omega) \quad \text{ and } \quad v_0 \in L^2(\Omega) \, .
	\]
	Then there exists a unique solution $(u,v) \in C\big([0,+\infty); H^1(\Omega) \times L^2(\Omega)\big)$ to~\eqref{eq:recast PDE full} with initial datum $u(0) = u_0$ and $v(0) = v_0$ (see Proposition~\ref{prop:well posedness nonlinear abstract} below for the precise definition of solution). 
\end{theorem}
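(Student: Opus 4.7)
The plan is to apply semigroup theory to the recast first-order system~\eqref{eq:recast PDE full}. The artificial $+u-u$ appearing there is suggestive: the $+u$ is absorbed into the linear part, producing the coercive operator $-\Delta+I$ on $L^2(\Omega)$ with homogeneous Neumann conditions, while the remaining $-u+\Phi'(u)$ is treated as a globally Lipschitz perturbation. In this way the problem is reduced to the standard framework of semilinear evolution equations in a Hilbert space.

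I would introduce the energy space $\mathcal H:=H^1(\Omega)\times L^2(\Omega)$ and the linear operator
\[
\mathcal A \begin{pmatrix} u \\ v \end{pmatrix} := \begin{pmatrix} v \\ \Delta u - u - v \end{pmatrix},
\]
with domain $D(\mathcal A):=\{(u,v)\in H^2(\Omega)\times H^1(\Omega):\nabla u\cdot\nu = 0 \text{ on }\partial\Omega\}$. Endowing $\mathcal H$ with the inner product $\langle (u,v),(u',v')\rangle_{\mathcal H}:=\int_\Omega(\nabla u\cdot\nabla u' + uu' + vv')\,\d x$, an integration by parts that makes essential use of the Neumann condition yields $\langle\mathcal A U,U\rangle_{\mathcal H}=-\|v\|_{L^2(\Omega)}^2\leq 0$, so $\mathcal A$ is dissipative. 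For the range condition, solving $(I-\mathcal A)(u,v)=(f,g)$ for $(f,g)\in\mathcal H$ reduces to the elliptic equation $-\Delta u + 3u = g+2f$ with Neumann boundary data: Lax--Milgram supplies a unique $H^1$ weak solution, and elliptic regularity up to the boundary (using the $C^2$ assumption on $\partial\Omega$ when $d\geq 2$) lifts it to $u\in H^2$ with vanishing Neumann trace, so $(u,u-f)\in D(\mathcal A)$. Since $D(\mathcal A)$ is dense in $\mathcal H$ (a standard fact about fractional powers of the self-adjoint Neumann operator $-\Delta+I$), Lumer--Phillips implies that $\mathcal A$ generates a $C_0$-semigroup of contractions on $\mathcal H$.

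Next, I define the nonlinearity $\mathcal F:\mathcal H\to\mathcal H$ by $\mathcal F(u,v):=(0,u-\Phi'(u))$. Property~\ref{it:p2} says that $\Phi'$ is globally Lipschitz on $\R$, so the Nemytskii map $u\mapsto u-\Phi'(u)$ is globally Lipschitz from $L^2(\Omega)$ into itself, and hence $\mathcal F$ is globally Lipschitz from $\mathcal H$ into $\mathcal H$ (with zero first component). The recast system~\eqref{eq:recast PDE full} then becomes the abstract Cauchy problem $\d U/\d t=\mathcal A U+\mathcal F(U)$, $U(0)=(u_0,v_0)\in\mathcal H$, which falls squarely within the classical theory of semilinear evolution equations (\cf~\cite{CazHar}). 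This yields a unique mild solution $U\in C([0,T];\mathcal H)$ for every $T>0$; because $\mathcal F$ is Lipschitz on the whole space (not merely locally), no blow-up criterion is triggered and the solution extends globally.

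The main obstacle I expect is essentially bookkeeping: choosing the correct graph-norm Hilbert structure on $\mathcal H$ so that both the energy identity for $\mathcal A$ and the range condition come out cleanly, and justifying the elliptic regularity up to the boundary. Once these two ingredients are in place, the global Lipschitz character of $\mathcal F$ coming from~\ref{it:p2} lets the abstract semilinear well-posedness theorem close the argument and deliver Theorem~\ref{prop:well-posedness approximation initial data in energy space}.
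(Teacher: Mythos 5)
Your proposal is correct and follows essentially the same route as the paper: the same first-order recasting on $H^1(\Omega)\times L^2(\Omega)$, the same absorption of $+u$ into the linear part to get a maximal monotone (equivalently, dissipative with the range condition) operator via integration by parts and the elliptic problem $-\Delta u+3u=2f+g$, and the same treatment of $-u+\Phi'(u)$ as a globally Lipschitz perturbation handled by the standard semilinear fixed-point theorem. The only cosmetic differences are sign conventions and your invocation of fractional powers for the density of the domain, where the paper simply cites that maximal monotone operators have dense domain.
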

  
\begin{remark} \label{rem:weak solution}
	A solution $u$ in the sense of Theorem~\ref{prop:well-posedness approximation initial data in energy space} is also a weak solution to~\eqref{eq:recast PDE full} in the following sense: $u \in C([0,+\infty);H^1(\Omega)) \cap C^1([0,+\infty);L^2(\Omega))$, and it satisfies the ODE 
	\[ \label{eq:ODE in L2}
	\begin{dcases}
	\frac{\d u}{\d t}(t) - v(t) = 0 \, , & \text{for } t \in (0,+\infty) \, ,  \\
	u(0) = u_0 \, ,
	\end{dcases}
	\]
	and 
	\[ \label{eq:weak vk}
	\begin{split}
	0 & = \langle v_0, \varphi(0) \rangle_{L^2(\Omega)} +  \int_0^{+\infty}\Big\langle v(t), - \frac{\d \varphi}{\d t}(t) + \varphi(t)\Big\rangle_{L^2(\Omega)}  \, \d t + \int_0^{+\infty} \langle \nabla u(t), \nabla \varphi(t) \rangle_{L^2(\Omega)} \, \d t \\
	& \quad + \int_0^{+\infty} \langle \Phi'(u(t)) , \varphi(t) \rangle_{L^2(\Omega)} \, \d t \, ,
	\end{split}
	\]
	for every $\varphi \in C^\infty_c(\R \x \R^d)$. We show this additional fact in the proof of Theorem~\ref{prop:well-posedness approximation initial data in energy space}.
\end{remark}

\subsection{Energy balances} We stress that a key role in our analysis is played by the \emph{energy functional} $E \colon H^1(\Omega) \x L^2(\Omega) \to [0,+\infty)$ defined~by 
\[ \label{def:E}
E(u,v) := \frac{1}{2}\|v\|^2_{L^2(\Omega)} + \frac{1}{2} \|\nabla u\|^2_{L^2(\Omega)} + \|\Phi(u)\|_{L^1(\Omega)}  \, , \quad \text{for all } (u,v) \in H^1(\Omega) \x L^2(\Omega) \, 
\]
and the auxiliary functional $J \colon H^1(\Omega) \x L^2(\Omega) \to [0,+\infty)$ defined~by
\[ \label{def:J}
J(u,v) := \frac{1}{2}\|u\|^2_{L^2(\Omega)} + \langle u, v \rangle_{L^2(\Omega)} \, , \quad \text{for all } (u,v) \in H^1(\Omega) \x L^2(\Omega)   \, .
\]

In Section~\ref{sec:wp-approx} we prove the following energy balances.
	
\begin{theorem}[Energy balances] \label{th:energy}	
  Let $d \geq 1$. Let $U_0 = (u_0,v_0)$ and assume that \eqref{eq:initial data in energy space} is satisfied. 
Then  the unique solution to~\eqref{eq:PDE} obtained in Theorem~\ref{prop:well-posedness approximation initial data in energy space} satisfies the following equalities: 
	\[ 
	\label{eq:approximate energy estimate II}
	E(u(t),\de_t u(t)) + \int_0^t \|\de_t u(s)\|_{L^2(\Omega)}^2 \, \d s = E(u_0,v_0) \, , \quad \text{for every } t \in [0,+\infty)\, , 
	\]
	and
	\[ \label{eq:estimate on Ik II}
	\begin{split}
	& J(u(t),\de_t u(t)) + \int_0^t \Big( \|\nabla u(s)\|^2_{L^2(\Omega)} + \langle u(s), \Phi'(u(s)) \rangle_{L^2(\Omega)} \Big) \, \d s \\
	& \quad = J(u_0,v_0) + \int_0^t \|\de_t u(s)\|_{L^2(\Omega)}^2 \, \d s \, , \quad \text{for every } t \in [0,+\infty) \, . 
	\end{split}
	\]
	
\end{theorem}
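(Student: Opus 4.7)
Both identities arise from multiplying the equation $\de_{tt}u - \Lap u + \de_t u + \Phi'(u) = 0$ by $\de_t u$ (for the first) and by $u$ (for the second), integrating over $\Omega$, and using the Neumann boundary condition to discard boundary contributions. At the energy-space regularity granted by Theorem~\ref{prop:well-posedness approximation initial data in energy space}, however, $\de_{tt}u$ exists only in a weak sense, so the formal multiplications cannot be carried out directly. I would therefore employ the standard approximation scheme: regularise the initial datum, derive the identities rigorously for the corresponding strong solutions, and pass to the limit using continuous dependence on initial data.

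More precisely, pick $u_0^n \in H^2(\Omega)$ with $\nabla u_0^n \cdot \nu = 0$ on $\partial \Omega$ and $v_0^n \in H^1(\Omega)$ such that $(u_0^n, v_0^n) \to (u_0, v_0)$ in $H^1(\Omega) \times L^2(\Omega)$. The standard semigroup theory underlying Proposition~\ref{prop:well posedness nonlinear abstract} then produces solutions $u_n$ in the stronger class $u_n \in C^1([0,+\infty); H^1(\Omega)) \cap C^2([0,+\infty); L^2(\Omega))$, which is enough to justify all the computations below pointwise in $t$. Pairing the equation satisfied by $u_n$ with $\de_t u_n$ in $L^2(\Omega)$ and integrating $-\Lap u_n$ by parts against $\de_t u_n$ using the Neumann condition yields
\[
\frac{\d}{\d t} E(u_n(t), \de_t u_n(t)) = - \|\de_t u_n(t)\|^2_{L^2(\Omega)}.
\]
Pairing instead with $u_n$, exploiting the identity $\langle u_n, \de_{tt} u_n \rangle_{L^2(\Omega)} = \tfrac{\d}{\d t}\langle u_n, \de_t u_n\rangle_{L^2(\Omega)} - \|\de_t u_n\|^2_{L^2(\Omega)}$ and again integrating the Laplacian by parts, one obtains
\[
\frac{\d}{\d t} J(u_n(t), \de_t u_n(t)) + \|\nabla u_n(t)\|^2_{L^2(\Omega)} + \langle u_n(t), \Phi'(u_n(t))\rangle_{L^2(\Omega)} = \|\de_t u_n(t)\|^2_{L^2(\Omega)}.
\]
Integrating on $[0,t]$ proves the two claimed balances for the regularised sequence.

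It remains to pass to the limit as $n \to \infty$. Uniqueness in Theorem~\ref{prop:well-posedness approximation initial data in energy space}, combined with the global Lipschitz bound on $\Phi'$ from \ref{it:p2}, yields continuity of the flow in $C([0,T]; H^1(\Omega) \times L^2(\Omega))$ for every $T > 0$, so that $u_n \to u$ in $C([0,T]; H^1(\Omega))$ and $\de_t u_n \to \de_t u$ in $C([0,T]; L^2(\Omega))$. The quadratic terms $\|\de_t u_n\|^2$, $\|\nabla u_n\|^2$, and $\|u_n\|^2$ therefore converge by standard arguments. For the nonlinear contributions, property \ref{it:p2} makes both $\Phi$ and $\Phi'$ globally Lipschitz on $\R$, whence $\Phi(u_n(t)) \to \Phi(u(t))$ in $L^1(\Omega)$ and $\Phi'(u_n(t)) \to \Phi'(u(t))$ in $L^2(\Omega)$ uniformly in $t \in [0,T]$, so that each term in the two identities converges and the balances persist in the limit. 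The only delicate point in this plan is extracting continuous dependence in the energy-space topology from the abstract framework; once that is available, no further obstacle remains.
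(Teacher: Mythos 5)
Your proposal is correct and follows essentially the same route as the paper: the authors first establish both balances for initial data in $D(\A)$ by the same multiplications against $\de_t u$ and $u$ (Proposition~\ref{prop:energy balance regular initial data}), and then pass to the limit along the approximating sequence $U_0^j \to U_0$ in $H$ using the Gr\"onwall-based continuous dependence already set up in the proof of Theorem~\ref{prop:well-posedness approximation initial data in energy space}. The "delicate point" you flag is handled there exactly as you anticipate, and the Lipschitz bounds on $\Phi$ and $\Phi'$ give convergence of the nonlinear terms as you describe.
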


\begin{remark}
  Theorem~\ref{th:energy} shows, in particular, that $E$ is  nonnegative and decreasing on trajectories. The same holds true for 
  \[ \label{eq:J + int}
    J(u(t),\de_t u(t)) + \int_t^{+\infty} \|\de_t u(s)\|_{L^2(\Omega)}^2 \, \d s \, .
  \] 
  Both of these facts will play a crucial role in the argument of this paper inspired by LaSalle's invariance principle. 
\end{remark}

\subsection{Long-time asymptotics}
We then turn to studying the asymptotic behavior of the solution. Our first result in this direction is qualitative and is proven in Section~\ref{sec:decay-approx}. 

\begin{theorem}[Long-time asymptotics] \label{prop:long-time limit}
	Let $d \geq 1$. Assume that $u_0, v_0$ satisfy~\eqref{eq:initial data in energy space}. Let $(u,v)$ be the unique solution to~\eqref{eq:PDE} obtained in Theorem~\ref{prop:well-posedness approximation initial data in energy space}. Then there exists $u_\infty$ constant a.e.\ in $\Omega$ with $u_\infty \in \{\Phi' = 0\}$ such that $u(t) \to u_\infty$ in $H^1(\Omega)$ and $\de_t u(t) \to 0$ in $L^2(\Omega)$ for $t \to +\infty$. 
\end{theorem}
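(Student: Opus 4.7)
The plan is to combine the LaSalle-type compactness supplied by Proposition~\ref{prop:omega-limit set} with a Haraux-inspired Lyapunov argument built on the auxiliary functional $J$, so as to select a unique limit out of the continuum $\{\Phi'=0\}$ of admissible rest points.

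From the energy balance~\eqref{eq:approximate energy estimate II}, $t \mapsto E(u(t),\de_t u(t))$ is nonincreasing, so the trajectory is bounded in $H^1(\Omega) \times L^2(\Omega)$ and $\int_0^{+\infty} \|\de_t u(s)\|_{L^2(\Omega)}^2 \, \d s < +\infty$. Proposition~\ref{prop:omega-limit set} then guarantees precompactness of the trajectory in $H^1(\Omega) \times L^2(\Omega)$ and shows that every element of the $\omega$-limit set $\omega(u_0,v_0)$ has the form $(u_\infty, 0)$ with $u_\infty$ constant and $u_\infty \in \{\Phi' = 0\} = (-\infty,-u_*] \cup \{0\} \cup [u_*,+\infty)$. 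Being the $\omega$-limit of a precompact continuous trajectory in a metric space, $\omega(u_0,v_0)$ is also compact and connected; therefore it lies in exactly one of the three connected components of $\{\Phi'=0\}$, so all its elements share the same sign or all vanish.

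The heart of the argument is the Haraux-style Lyapunov functional
\[
\Psi(t) := J(u(t), \de_t u(t)) + \int_t^{+\infty} \|\de_t u(s)\|_{L^2(\Omega)}^2 \, \d s.
\]
Differentiating~\eqref{eq:estimate on Ik II} in time and invoking property~\ref{it:p5}, one obtains
\[
\frac{\d}{\d t} \Psi(t) = -\|\nabla u(t)\|_{L^2(\Omega)}^2 - \langle u(t), \Phi'(u(t))\rangle_{L^2(\Omega)} \leq 0,
\]
so $\Psi$ is nonincreasing. Since $J$ is bounded along the trajectory (Cauchy--Schwarz and the $H^1 \times L^2$ bound) and the tail integral is nonnegative, $\Psi$ admits a finite limit $\ell$. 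Along any sequence $t_n \to +\infty$ with $(u(t_n), \de_t u(t_n)) \to (u_\infty, 0)$ in $H^1(\Omega) \times L^2(\Omega)$, the cross term $\langle u(t_n), \de_t u(t_n)\rangle_{L^2(\Omega)}$ vanishes and $\int_{t_n}^{+\infty} \|\de_t u\|_{L^2(\Omega)}^2 \, \d s \to 0$; hence, as $u_\infty$ is constant,
\[
\ell = \tfrac{1}{2}\|u_\infty\|_{L^2(\Omega)}^2 = \tfrac{1}{2}|u_\infty|^2 |\Omega|,
\]
so $|u_\infty|$ takes the same value on every element of $\omega(u_0,v_0)$.

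Combining this with the connectedness of $\omega(u_0,v_0)$ already established forces the $\omega$-limit set to collapse to a single point $(u_\infty,0)$; the precompactness of the trajectory then upgrades this subsequential information to the full convergence $u(t) \to u_\infty$ in $H^1(\Omega)$ and $\de_t u(t) \to 0$ in $L^2(\Omega)$. The main obstacle is precisely the continuum of equilibria, which rules out the direct application of the classical LaSalle principle; the Haraux-inspired functional $\Psi$ is the creative ingredient that, coupled with the connectedness of the $\omega$-limit set, singles out a unique limit.
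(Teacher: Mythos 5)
Your proposal is correct and follows essentially the same strategy as the paper: it combines the compactness and the characterization of $\omega$-limit points from Proposition~\ref{prop:omega-limit set} with the monotone quantity built from $J$ and the tail of the dissipation integral, which forces every accumulation point to have the same modulus $\sqrt{2\ell}\,|\Omega|^{-1/2}$. The only difference is one of packaging: you invoke the standard connectedness of the $\omega$-limit set of a precompact continuous trajectory to exclude the residual $\pm$ ambiguity, whereas the paper reproves this fact ad hoc via an intermediate-value contradiction with an alternating sequence of times $s_j$; both are valid, though you should derive the monotonicity of $\Psi$ directly from the integrated identity~\eqref{eq:estimate on Ik II} evaluated at two times $s \le t$ rather than by pointwise differentiation, which is not justified for data that are merely in the energy space.
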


We remark that constant functions valued in $\{\Phi' = 0\}$ are the weak solutions to the stationary problem
  \[  
    \begin{cases}
         - \Lap u  + \Phi'(u ) = 0 \, , & x \in  \Omega   \, , \\
        \nabla u  \cdot \nu = 0  \, , & x \in    \de \Omega \,  .
    \end{cases}
\]

\begin{remark}
  Even though not explicitly quantified, the long-time limit $u_\infty$ in Theorem~\ref{prop:long-time limit} depends only the unique long-time limit of~\eqref{eq:J + int}, which in turn only depends on the initial data. 
\end{remark}

\subsection{Exponential decay rate}
Our final result is the exponential decay of the solutions proven in Section~\ref{sec:rate-approx}. For this, we need to exploit the compact  embedding of $H^1(\Omega) \hookrightarrow C(\Omega)$, which holds in one space-dimension, and exclude the critical case $u_\infty \equiv \pm u_*$. 

\begin{theorem}[Exponential decay rate]\label{prop:decay-rate}
	Let us assume that $d =1$ and $\Omega = (0,L) \subset \R$. Given initial data $u_0, v_0$ satisfying~\eqref{eq:initial data in energy space}, let $(u,v)$ be the unique solution to~\eqref{eq:PDE} obtained in Theorem~\ref{prop:well-posedness approximation initial data in energy space}. Let  $u_\infty$ be as in Theorem~\ref{prop:long-time limit}. Let us assume that $u_\infty = 0$ or $|u_\infty| > u_*$. Then 
	\[ \label{eq:exponential decay approximate solutions}
	\|u(t) - u_{\infty}\|_{H^1(\Omega)} + \|\de_t u(t)\|_{L^2(\Omega)} \leq M_\Phi e^{-\kappa t} \, ,  
	\]
	for some $M_\Phi > 0$ (possibly depending on the Lipschitz constant of $\Phi'$) and $\kappa > 0$. 
\end{theorem}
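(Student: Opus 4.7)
The plan is to upgrade the qualitative convergence from Theorem~\ref{prop:long-time limit} into a linear equation valid on a forward time-interval $[T,+\infty)$, and then to extract exponential decay by a perturbed-energy Gr\"onwall argument. Combining the one-dimensional compact embedding $H^1((0,L)) \hookrightarrow C([0,L])$ with Theorem~\ref{prop:long-time limit} yields $u(t) \to u_\infty$ uniformly on $[0,L]$ as $t \to +\infty$. Because by hypothesis $u_\infty$ lies strictly inside one of the two flat regions of $\Phi$, there exists $T > 0$ such that for every $t \geq T$ either (Case A, $u_\infty = 0$) $|u(t,x)| \leq u_* - \tfrac{1}{\sigma}$ for all $x$, hence $\Phi'(u(t,x)) = 2u(t,x)$; or (Case B, $|u_\infty| > u_*$) $|u(t,x)| > u_*$ for all $x$, hence $\Phi'(u(t,x)) = 0$. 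Setting $w(t,x) := u(t,x) - u_\infty$, in Case A the perturbation solves the linear damped Klein--Gordon equation $\de_{tt}^2 w + \de_t w - \de_{xx}^2 w + 2w = 0$ with Neumann conditions for $t \geq T$, while in Case B it solves the pure damped wave equation $\de_{tt}^2 w + \de_t w - \de_{xx}^2 w = 0$.

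For Case A I would introduce the perturbed energy $\mathcal{E}_\e(t) := \tfrac{1}{2}\|\de_t w\|_{L^2}^2 + \tfrac{1}{2}\|\de_x w\|_{L^2}^2 + \|w\|_{L^2}^2 + \e\langle \de_t w, w\rangle_{L^2}$ for small $\e > 0$, chosen so that $\mathcal{E}_\e$ is equivalent to the squared $H^1 \x L^2$ norm of $(w,\de_t w)$. Differentiating along the equation, using Neumann boundary conditions to kill the trace terms after integration by parts, and absorbing the mixed term via Young's inequality, one obtains $\frac{d}{dt}\mathcal{E}_\e \leq -c_\e \mathcal{E}_\e$ for a suitable $c_\e > 0$; here the mass term $2w$ supplies coercivity of $-\de_{xx}^2 + 2$ on the entire $H^1((0,L))$ with no appeal to Poincaré--Wirtinger. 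Gr\"onwall then gives~\eqref{eq:exponential decay approximate solutions} on $[T,+\infty)$, while the initial segment $[0,T]$ is absorbed into the constant $M_\Phi$ thanks to the global energy bound from Theorem~\ref{th:energy}.

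Case B is subtler because $-\de_{xx}^2$ with Neumann conditions has the nontrivial kernel of constants. I would split $w = \bar w + w^\perp$ into its spatial mean and zero-mean part. Integrating the equation over $(0,L)$ produces the scalar ODE $\ddot{\bar w} + \dot{\bar w} = 0$, whose general solution on $[T,+\infty)$ is $\bar w(t) = \bar w(T) + \dot{\bar w}(T)(1 - e^{-(t-T)})$; the a priori fact that $\bar w(t) \to 0$, inherited from Theorem~\ref{prop:long-time limit}, forces the compatibility $\bar w(T) + \dot{\bar w}(T) = 0$, which collapses the formula to $\bar w(t) = -\dot{\bar w}(T)\, e^{-(t-T)}$. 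The zero-mean part solves the same PDE on the subspace where Poincaré--Wirtinger restores coercivity of $-\de_{xx}^2$, so the analogue $\tfrac{1}{2}\|\de_t w^\perp\|_{L^2}^2 + \tfrac{1}{2}\|\de_x w^\perp\|_{L^2}^2 + \e \langle \de_t w^\perp, w^\perp\rangle_{L^2}$ satisfies the same Gr\"onwall differential inequality after choosing $\e$ small enough. The main obstacle is exactly the mean compatibility: exponential decay of $\bar w$ is invisible to the linear ODE in isolation, and it is only the upstream LaSalle-type information $\bar w(t) \to 0$ that selects the decaying solution. This also explains why the borderline cases $u_\infty = \pm u_*$ must be excluded: there uniform convergence alone does not confine $u(t,\cdot)$ to a single linear regime of $\Phi'$, and the entire linearization above breaks down.
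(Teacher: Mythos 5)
Your proposal is correct and follows essentially the same route as the paper: uniform convergence via the one-dimensional embedding $H^1((0,L)) \hookrightarrow C([0,L])$ confines the dynamics to a single linear regime of $\Phi'$ for $t \geq T$, a perturbed energy with cross term $\lambda\langle u - u_\infty, \de_t u\rangle$ yields the Gr\"onwall inequality in the Klein--Gordon case, and in the pure damped-wave case the paper likewise combines Poincar\'e--Wirtinger on the zero-mean part with the explicit solution of $\ddot{\bar u} + \dot{\bar u} = 0$, identifying $u_\infty = \bar u(T_\sigma) + \dot{\bar u}(T_\sigma)$ exactly as your compatibility condition does. Your upfront mean/zero-mean splitting is a minor reorganization of the same ingredients.
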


\begin{remark}[Dependence of the decay rate on the Lipschitz constant of $\Phi'$]\label{rk:sigma}
	In the proof of Theorem \ref{prop:decay-rate}, the decay rates are given for time $t > T_\sigma$, depending on the constant $\sigma$ in the definition \eqref{def:Phik}. However, it is not clear whether $T_\sigma$ is bounded independently of $\sigma$ or $T_\sigma \to + \infty$ as $\sigma \to 0^+$. This issue is addressed in Appendix \ref{app:ODE} for a finite-dimensional system.  
\end{remark}

\section{Preliminary results on abstract evolution problems and semigroups}
\label{sec:prelim}

The well-posedness theory for semilinear evolution problems of the form 
\[\label{eq:abstract nonlinear}
\begin{dcases}
\frac{\d U}{\d t}(t)  + \A U(t) + F(U(t))= 0 \, , & t \in  (0,+\infty)  \, , \\
U(0) = U_0 \, ,  &   
\end{dcases}
\] 
where $\A$ is a maximal monotone operator, is based on a well-known machinery (see, \eg, \cite{Har,CazHar,Brez}). We collect some details in this section since some tools will be used later. 

First, we recall the following definition.

\begin{definition}[Maximal monotone operators] \label{def:maximal monotone}
	Let $H$ be a Hilbert space, let $D(\A) \subset H$ be a vector subspace, and let $\A \colon D(\A) \to H$ be a linear operator. The linear operator $\A$ is said to be \emph{maximal monotone} if the following two conditions are satisfied:
	\begin{enumerate}
		\item[1.] $\langle \A U, U \rangle_H \geq 0$ for all $U \in D(\A)$;
		\item[2.] there exists $\lambda > 0$ such that $( \mathrm{Id} + \lambda \A ) D(\A) = H$.   
	\end{enumerate} 
\end{definition}

The starting point for the analysis of \eqref{eq:abstract nonlinear} is the well-posedness for the linear homogeneous problem. 

\begin{proposition}[Semigroup of contractions and abstract homogeneous Cauchy problem]  \label{prop:semigroup of contractions}
	Let $\A \colon D(\A) \to H$ be a maximal monotone operator. Then $\A$ generates a continuous semigroup of contractions $\{S_\A(t)\}_{t  \geq 0}$, \ie, for every $t \geq 0$, $S_\A(t) \colon H \to H$ is a linear operator with $\|S_\A(t)\|_H \leq 1$, $S_\A(s+t) = S_\A(s)S_\A(t)$, $S_\A(0) = \mathrm{Id}$, and $\lim_{t \to 0^+} \| S_\A(t)U_0 - U_0\|_{H} = 0$ for every $U_0 \in H$. Moreover, if $U_0 \in D(\A)$, then $U(t) := S(t)U_0 \in C^1([0,+\infty);H) \cap C([0,+\infty);D(\A))$ is the unique solution to 
	\[ \label{eq:homogeneous ODE}
	\begin{dcases}
	\frac{\d U}{\d t}(t)  + \A U(t) = 0 \, , & t \in  (0,+\infty)  \, , \\
	U(0) = U_0 \, .  &   
	\end{dcases}
	\] 
\end{proposition}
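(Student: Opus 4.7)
The plan is to prove Proposition~\ref{prop:semigroup of contractions} via the classical Hille--Yosida / Yosida-approximation machinery for maximal monotone operators, following the standard strategy found in Brezis. The approach has three ingredients: construction of the resolvent and Yosida regularization, passage to the limit to define $S_\A(t)$, and verification of the ODE and uniqueness for $U_0 \in D(\A)$.

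First, I would introduce the resolvent and the Yosida approximation. The maximality condition combined with monotonicity gives that for every $\lambda > 0$ the operator $\mathrm{Id} + \lambda \A \colon D(\A) \to H$ is bijective (injectivity is immediate from $\langle \A U, U\rangle \geq 0$; for surjectivity one uses the hypothesis together with a standard argument showing the range condition is independent of $\lambda > 0$). Hence $J_\lambda := (\mathrm{Id} + \lambda \A)^{-1}$ is well defined on $H$, and monotonicity yields $\|J_\lambda\|_{\mathcal{L}(H)} \leq 1$. Setting $\A_\lambda := \frac{1}{\lambda}(\mathrm{Id} - J_\lambda) = \A J_\lambda$ produces a bounded, Lipschitz, monotone linear operator. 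A further key point is that $D(\A)$ is dense in $H$: if $f \in H$ were orthogonal to $D(\A)$, then for $U_\lambda := J_\lambda f$ one shows $U_\lambda \to f$ weakly while $U_\lambda \in D(\A)$, forcing $f = 0$.

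Next, since $\A_\lambda$ is bounded, I define the contraction semigroup $S_\lambda(t) := e^{-t \A_\lambda}$ by the norm-convergent exponential series; monotonicity of $\A_\lambda$ gives $\|S_\lambda(t)\|_{\mathcal{L}(H)} \leq 1$. The crucial estimate is then that for $U_0 \in D(\A)$ the family $\{S_\lambda(t) U_0\}_{\lambda > 0}$ is Cauchy as $\lambda \to 0^+$, uniformly for $t$ in compact subsets of $[0,+\infty)$. This is obtained by differentiating $t \mapsto \|S_\lambda(t) U_0 - S_\mu(t) U_0\|_H^2$, using the commutation of $\A_\lambda$ with $S_\lambda(t)$ and the identity $\A_\lambda U_0 - \A U_0 \to 0$ (which follows from $J_\lambda U_0 \to U_0$ for $U_0 \in D(\A)$). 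This yields a pointwise limit $S_\A(t) U_0 := \lim_{\lambda \to 0^+} S_\lambda(t) U_0$, first on $D(\A)$ and then, by density and the uniform contraction bound, on all of $H$. The semigroup properties ($S_\A(0) = \mathrm{Id}$, $S_\A(s+t) = S_\A(s) S_\A(t)$, contraction) and strong continuity at $t = 0$ transfer directly from the approximants.

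Finally, to obtain the regularity and ODE statement for $U_0 \in D(\A)$, I would pass to the limit in the identity $\frac{\d}{\d t} S_\lambda(t) U_0 = -\A_\lambda S_\lambda(t) U_0 = -S_\lambda(t) \A_\lambda U_0$. The right-hand side converges uniformly on compact intervals to $-S_\A(t) \A U_0$, hence $U(t) := S_\A(t) U_0$ belongs to $C^1([0,+\infty); H)$, satisfies $\frac{\d U}{\d t} + \A U = 0$, and a posteriori $\A U(t) = -\frac{\d U}{\d t}(t)$ shows $U \in C([0,+\infty); D(\A))$ when $D(\A)$ is endowed with the graph norm. Uniqueness is immediate: if $U$ and $\tilde U$ are two such solutions and $W = U - \tilde U$, then
\[
\frac{1}{2}\frac{\d}{\d t} \|W(t)\|_H^2 = -\langle \A W(t), W(t)\rangle_H \leq 0 \, ,
\]
so $W \equiv 0$ from $W(0) = 0$. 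The main technical obstacle is the Cauchy estimate for $S_\lambda(t) U_0$ in $\lambda$, which is where monotonicity and the specific structure of the Yosida approximation are essential; everything else is essentially formal once that step is in place.
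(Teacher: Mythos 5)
Your proposal is correct: it is the standard Hille--Yosida/Yosida-approximation construction for maximal monotone operators, which is precisely the classical result the paper invokes here without proof (it simply cites Haraux, Theorem~I.2.2.1 and Remark~I.2.2.3). All the key steps --- resolvent bounds, the Cauchy estimate in $\lambda$ for $S_\lambda(t)U_0$ on $D(\A)$, extension by density, and the energy-based uniqueness --- are the ones found in the cited reference, so there is nothing to reconcile.
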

\begin{proof}
	The result is classical: see, \eg, \cite[Theorem~I.2.2.1 \& Remark~I.2.2.3]{Har}. 
\end{proof}

Owing to the previous result, the curve $U(t) = S_\A(t)U_0$ is interpreted as the solution to~\eqref{eq:homogeneous ODE} even when the initial datum $U_0 \in H$ (rather than the smaller space $D(\A)$). 

To deal with the (linear) inhomogeneous problem, we need to resort to \emph{Duhamel's formula}.

\begin{proposition}[Duhamel's formula and abstract inhomogeneous Cauchy problem] \label{prop:solution less regular data}
	Let $\A \colon D(\A) \to H$ be a maximal monotone operator and let $\{S_\A(t)\}_{t \geq 0}$ be the continuous group of contractions given by Proposition~\ref{prop:semigroup of contractions}.  Let $T > 0$ and let $f \in W^{1,1}([0,T]; H)$. Let $U_0 \in D(\A)$. Then the curve defined by  
	\[ 
	U(t) := S_\A(t)U_0 - \int _0^t S_\A(t-s) f(s) \, \d s \, , \quad t \in [0,T]
	\]  
	satisfies $U \in C^1([0,T];H) \cap C([0,T];D(\A))$ and is the unique solution to 
	\[
	\begin{dcases} \label{eq:inhomogeneous ODE}
	\frac{\d U}{\d t}(t)  + \A U(t) + f(t)= 0 \, , & t \in  (0,T)  \, , \\
	U(0) = U_0 \, .  &   
	\end{dcases}
	\] 
\end{proposition}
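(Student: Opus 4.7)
The plan is to decompose the candidate by linearity as $U = U_1 + U_2$ with
\[
U_1(t) := S_\A(t) U_0 \, , \qquad U_2(t) := -\int_0^t S_\A(t-s) f(s) \, \d s \, .
\]
The homogeneous part $U_1$ is handled directly by Proposition~\ref{prop:semigroup of contractions}: since $U_0 \in D(\A)$, one has $U_1 \in C^1([0,T];H) \cap C([0,T];D(\A))$, $U_1(0) = U_0$, and $\frac{\d}{\d t} U_1 + \A U_1 = 0$. The entire content of the proof then reduces to analyzing the Duhamel convolution $U_2$.

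Continuity of $U_2$ in $H$ with $U_2(0)=0$ is immediate from the strong continuity of $\{S_\A(t)\}_{t \geq 0}$, the bound $\|S_\A(t)\| \leq 1$, and dominated convergence. The non-trivial task is to show that $U_2(t) \in D(\A)$ for every $t \in [0,T]$, and I would do this by exploiting the $W^{1,1}$ regularity of $f$. Writing $f(s) = f(0) + \int_0^s f'(r)\,\d r$, substituting into the convolution, and reordering integrations via Fubini recasts
\[
U_2(t) = -G(t) f(0) - \int_0^t G(t-r) f'(r) \, \d r \, , \qquad G(t) h := \int_0^t S_\A(\tau) h \, \d \tau \, .
\]
The cornerstone fact from the theory of $C_0$-semigroups generated by maximal monotone operators is that $G(t)h \in D(\A)$ for \emph{every} $h \in H$ (with no extra regularity assumed), and $\A G(t) h = h - S_\A(t) h$. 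Because $\|\A G(t-r) f'(r)\|_H \leq 2\|f'(r)\|_H$ is $L^1$ in $r$, the second Bochner integral can be taken as valued in $D(\A)$ equipped with the graph norm. This yields both $U_2(t) \in D(\A)$ and the explicit identity
\[
\A U_2(t) = -f(t) + S_\A(t) f(0) + \int_0^t S_\A(t-s) f'(s) \, \d s \, .
\]

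With $U_2(t) \in D(\A)$ secured, a direct difference-quotient computation on $U_2(t+h) - U_2(t)$---splitting $\int_0^{t+h}$ into $\int_0^t + \int_t^{t+h}$ and exploiting the semigroup identity $S_\A(t+h-s) = S_\A(h) S_\A(t-s)$---produces
\[
\frac{\d U_2}{\d t}(t) = -\A U_2(t) - f(t) \, ,
\]
which is exactly~\eqref{eq:inhomogeneous ODE}. Continuity of $\A U_2$ in $H$ (from the explicit formula above) promotes $U_2$ to $C^1([0,T];H) \cap C([0,T];D(\A))$, and summing with $U_1$ closes the existence and regularity claim. Uniqueness follows from a standard monotonicity argument: for two solutions $U, \tilde U$, the difference $W := U - \tilde U$ satisfies the homogeneous problem with $W(0)=0$, whence $\frac{1}{2}\frac{\d}{\d t}\|W(t)\|_H^2 = -\langle \A W(t), W(t)\rangle_H \leq 0$ and $W \equiv 0$.

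The main obstacle is the step proving $U_2(t) \in D(\A)$: neither the raw formula for $U_2$ nor the bare integrability of $f$ would suffice. The maneuver of rewriting the convolution through the primitive operator $G(t)$ is what absorbs the time regularity of $f$ into the range of $\A$, circumventing the less effective alternative of approximating $f$ by $D(\A)$-valued smooth functions and having to pass to the limit in the graph norm.
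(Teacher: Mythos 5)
Your proof is correct and is essentially the standard argument behind the reference the paper invokes --- the paper itself offers no details and simply points to \cite[Proposition~4.1.6]{CazHar} --- namely, absorbing the $W^{1,1}$ regularity of $f$ through the primitive operator $G(t)h = \int_0^t S_\A(\tau)h\,\d\tau$, whose range lies in $D(\A)$ with $\A G(t)h = h - S_\A(t)h$. The only points worth making explicit are that pulling $\A$ through the Bochner integral $\int_0^t G(t-r)f'(r)\,\d r$ uses the closedness of $\A$ (automatic for maximal monotone operators) together with the $L^1$ bound you state, and that the difference-quotient limit $\frac{1}{h}\int_t^{t+h}S_\A(t+h-s)f(s)\,\d s \to f(t)$ uses the continuity of $f$, which is guaranteed since $W^{1,1}([0,T];H)$ functions are absolutely continuous.
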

\begin{proof}
	The result is obtained by approximating the initial datum with initial data in $D(\A)$ and applying Proposition~\ref{prop:semigroup of contractions}. For details, see, \eg, \cite[Proposition~4.1.6]{CazHar}.
\end{proof}
As above, thanks to Duhamel's formula one interprets the curve $U(t) := S_\A(t)U_0 - \int _0^t S_\A(t-s) f(s) \, \d s$ as the solution to~\eqref{eq:inhomogeneous ODE} even when $U_0 \in H$ (rather than the smaller space $D(\A)$).

We conclude the preliminary results by recalling the well-posedness for semilinear problems with a Lipschitz-continuous nonlinearity. Duhamel's formula allows us to give a notion of solution here as well. 

\begin{proposition}[Well-posedness of an abstract semilinear problem] \label{prop:well posedness nonlinear abstract}
	Let $\A \colon D(\A) \to H$ be a maximal monotone operator and let $\{S_\A(t)\}_{t \geq 0}$ be the continuous group of contractions given by Proposition~\ref{prop:semigroup of contractions}.  Let $F \colon H \to H$ be Lipschitz continuous and let $U_0 \in H$. Then there exists a unique curve $U \in C([0,+\infty);H)$ such that 
	\[ \label{eq:ODE in integral form}
	U(t) = S_\A(t) U_0 - \int _0^t S_\A(t-s) F(U(s)) \, \d s \, , \quad t \in [0,+\infty)\, .
	\]
	If, in addition, $U_0 \in D(\A)$, then $U \in C^1([0,+\infty);H) \cap C([0,+\infty);D(\A))$ and it satisfies
	\[ 
	\begin{dcases}
	\frac{\d U}{\d t}(t)  + \A U(t) +  F(U(t)) = 0 \, , & t \in  (0,+\infty)  \, , \\
	U(0) = U_0 \, .  &   
	\end{dcases}
	\]
\end{proposition}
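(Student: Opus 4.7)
The plan is a standard two-stage argument: first a Banach fixed-point argument for the integral equation \eqref{eq:ODE in integral form}, then a bootstrap from $C([0,+\infty);H)$ to $C^1 \cap C(D(\A))$ regularity under the assumption $U_0 \in D(\A)$, using Proposition~\ref{prop:solution less regular data} as a black box.

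\emph{Local and global well-posedness in $C([0,+\infty);H)$.} Fix $T>0$ and on the Banach space $X_T := C([0,T];H)$ introduce the weighted norm $\|V\|_\lambda := \sup_{t\in[0,T]} e^{-\lambda t}\|V(t)\|_H$, which is equivalent to the sup norm for every $\lambda>0$. Define $\Psi\colon X_T \to X_T$ by
\[
\Psi(V)(t) := S_\A(t) U_0 - \int_0^t S_\A(t-s) F(V(s))\, \d s \, .
\]
Continuity of $t\mapsto \Psi(V)(t)$ in $H$ follows from strong continuity of $\{S_\A(t)\}$ and uniform continuity of the integrand on $[0,T]$. Using the contraction property $\|S_\A(t)\|\le 1$ and the Lipschitz constant $L := \Lip(F)$, one gets, for $V_1,V_2\in X_T$,
\[
e^{-\lambda t}\|\Psi(V_1)(t) - \Psi(V_2)(t)\|_H \le L\int_0^t e^{-\lambda(t-s)}\, e^{-\lambda s}\|V_1(s)-V_2(s)\|_H\, \d s \le \tfrac{L}{\lambda}\|V_1 - V_2\|_\lambda \, .
\]
Choosing $\lambda > L$ makes $\Psi$ a strict contraction on $X_T$ for every $T>0$, so Banach's fixed-point theorem yields a unique $U\in C([0,T];H)$ solving \eqref{eq:ODE in integral form} on $[0,T]$. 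Since $T$ is arbitrary and local solutions must agree on overlaps by uniqueness, they glue to a unique $U\in C([0,+\infty);H)$.

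\emph{Bootstrapping the regularity for $U_0\in D(\A)$.} The main extra ingredient is Lipschitz continuity of $U$ in time. For $t\in[0,T]$ and $h>0$, splitting the integral in $\eqref{eq:ODE in integral form}$ at $h$ and performing the change of variable $s\mapsto s+h$ in the second piece, I obtain the identity
\[
U(t+h)-U(t) = S_\A(t)\bigl[S_\A(h)U_0 - U_0\bigr] - \int_0^h S_\A(t+h-s) F(U(s))\, \d s + \int_0^t S_\A(t-s)\bigl[F(U(s))-F(U(s+h))\bigr]\, \d s \, .
\]
Since $U_0\in D(\A)$, Proposition~\ref{prop:semigroup of contractions} gives $\|S_\A(h)U_0 - U_0\|_H \le h\|\A U_0\|_H$, while the Lipschitz character of $F$ together with continuity of $U$ yields a bound $\|F(U(s))\|_H\le C_T$ on $[0,T]$. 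Estimating in $H$, contractivity of $S_\A$ and Lipschitzness of $F$ give
\[
\|U(t+h)-U(t)\|_H \le h\bigl(\|\A U_0\|_H + C_T\bigr) + L\int_0^t \|U(s+h)-U(s)\|_H\, \d s \, ,
\]
and Grönwall's lemma then delivers $\|U(t+h)-U(t)\|_H \le C_T' h$ on $[0,T]$. Hence $U\in W^{1,\infty}_{\mathrm{loc}}([0,+\infty);H)$, and composing with the Lipschitz map $F$ we obtain $f(\cdot) := F(U(\cdot))\in W^{1,\infty}_{\mathrm{loc}}([0,+\infty);H) \subset W^{1,1}_{\mathrm{loc}}([0,+\infty);H)$. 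Applying Proposition~\ref{prop:solution less regular data} to \eqref{eq:inhomogeneous ODE} with this forcing $f$ on each interval $[0,T]$ and comparing with \eqref{eq:ODE in integral form}, we conclude that $U\in C^1([0,+\infty);H)\cap C([0,+\infty);D(\A))$ and satisfies the pointwise ODE.

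\emph{Main obstacle.} The only non-mechanical step is the Lipschitz-in-time estimate for $U$ when $U_0\in D(\A)$: one has to choose the right splitting of the Duhamel integral so that a Grönwall argument closes. Once this Lipschitz bound is in place, the regularity conclusion reduces to an application of Proposition~\ref{prop:solution less regular data}, and the existence/uniqueness part is routine via the weighted-norm contraction, which has the added benefit of yielding a \emph{global} solution in one shot, avoiding a separate continuation argument.
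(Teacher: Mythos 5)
Your proof is correct and, for the existence--uniqueness part, follows essentially the same route as the paper: a Banach fixed-point argument for the Duhamel integral equation in an exponentially weighted norm (you work on $[0,T]$ for arbitrary $T$ and glue, the paper weights on $[0,+\infty)$ directly; both are fine). For the extra regularity when $U_0 \in D(\A)$ the paper simply cites \cite[Proposition 4.3.9]{CazHar}, whereas you correctly reproduce that argument in full --- the translated-Duhamel identity, the bound $\|S_\A(h)U_0-U_0\|_H\le h\|\A U_0\|_H$, Gr\"onwall to get local Lipschitz continuity of $U$, hence $F(U(\cdot))\in W^{1,1}_{\loc}$ (using that Lipschitz curves into a Hilbert space are $W^{1,\infty}$), and then Proposition~\ref{prop:solution less regular data} --- so your write-up is, if anything, more self-contained than the paper's.
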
 
\begin{proof}
	The proof of the existence and uniqueness claim is based on Banach's fixed-point theorem (see, \eg, \cite[Proposition 4.3.3]{CazHar}). For completeness, we provide a sketch of the proof below. 
	
Following \cite[Theorem 7.3]{Brez}, we note that $C([0,+\infty) ; H)\eqqcolon E$ is a Banach space when endowed with the exponentially-weighted norm $\|U\|_{E}:=\sup _{t \geqslant 0} e^{-\gamma t}\|U(t)\|_{H}$.

We fix $\gamma \ge  \mathrm{Lip}(F)$ and apply Banach's fixed point theorem to the map 
$$T: V \mapsto  S_\A(t) U_0 - \int _0^t S_\A(t-s) F(V(s))$$
  in the space $X\coloneqq \{V \in C([0,+\infty); H): \, \|V\|_E \le \|U_0\|_H\}$. 
 First, we check that $T$ is a self mapping of $X$: 
 \begin{align*}
\|V(t)\|_H &\le \|U_0\|_H + \int_0^t \mathrm{Lip}(F) \|V(s)\|_H \, \d s \\
&\le \|U_0\|_H \exp(\mathrm{Lip}(F)t) \, ,
 \end{align*}
which yields 
\begin{align*}
\|V\|_{E} \le \|U_0\|_H \underbrace{\exp((\mathrm{Lip}(F) - \gamma)t)}_{\le 1} \,. 
\end{align*}
	
Second, we check that $T$ is a contraction: 
\begin{align*}
\|T(U)(t)-T(V)(t)\|_{H} &\leq \int_{0}^{t} \underbrace{\|S_{\mathscr{A}}(t-s)\|}_{\le c < 1}\mathrm{Lip}(F)\|U(t)-V(t)\|_H \\
&  \le   c \, \mathrm{Lip}(F) \,  \int_{0}^{t}  e^{\gamma s}\, \d s \,  \|U-V\|_E \\
&  \le  c \frac{\mathrm{Lip}(F)}{\gamma} (e^{\gamma t} - 1)\, \|U - V\|_E,
\end{align*}	
	which yields 
\begin{align*}
\|T(U)(t)-T(V)(t)\|_{E} &\leq    \underbrace{c \frac{\mathrm{Lip}(F)}{\gamma} (1- e^{\gamma t})}_{\le c < 1} \, \|U-V\|_E.	
	\end{align*} 
By the arbitrariness of the time-horizon $T>0$, we may thus deduce the claimed global well-posedness result. The additional regularity for $U_0 \in D(\A)$ is proven in \cite[Proposition 4.3.9]{CazHar}.
\end{proof}

\begin{remark}[Dynamical system] \label{rmk:dynamical system operator}
	Let us consider the family of (nonlinear) operators $\{T(t)\}_{t \geq 0}$ where $T(t) \colon H \to H$ defined as follows: For every $U_0 \in H$, let $U(t)$ be the unique solution to~\eqref{eq:ODE in integral form} provided by Proposition~\ref{prop:well posedness nonlinear abstract} and let $T(t)(U_0) := U(t)$. It satisfies the following properties: 
	\begin{enumerate}[label=(D\arabic*)]
		\item \label{it:1} $T(0) = \mathrm{Id}$;
		\item \label{it:2} $T(t)$ is continuous for every $t \geq 0$;
		\item \label{it:3} $T(t + s) = T(t) \circ T(s)$ for every $t,s \geq 0$;
		\item \label{it:4} $t \mapsto T(t)(U_0)$ is continous for every $U_0 \in H$.
	\end{enumerate}
	Following the notation of \cite[Definition 9.1.1]{CazHar}, we say that $\{T(t)\}_{t \ge 0}$ is a \emph{dynamical system on $H$}.
	Properties~\ref{it:1} and~\ref{it:4} are consequences of the definition and the continuity in time of solutions to the equation. Property~\ref{it:3} follows from the uniqueness of solutions.  Property~\ref{it:2} amounts to continuous dependence on the initial data for the solution of the PDE, which follows from the Gr\"onwall-type argument in \cite[Proposition 4.3.7]{CazHar}.
\end{remark}

\section{Proof of the well-posedness result}
\label{sec:wp-approx}

\subsection{ODE in Hilbert space} 
\label{ssec:ODE formulation} 

To apply the general well-posedness theory for semilinear evolution problems illustrated in Section~\ref{sec:prelim}, it is convenient to recast the Cauchy problem~\eqref{eq:recast PDE full} in the form~\eqref{eq:abstract nonlinear}. We let $H = H^1(\Omega) \x L^2(\Omega)$ be endowed with the scalar product 
\[ \label{eq:scalar product in H}
\begin{split}
\langle (u,v), (\tilde u, \tilde v) \rangle_H & := \int_{\Omega} u \tilde u \, \d x + \int_{\Omega} \nabla u \cdot \nabla \tilde u \, \d x  + \int_{\Omega} v \tilde  v \, \d x \\
& = \langle u,\tilde u\rangle_{L^2(\Omega)} + \langle \nabla u,\nabla  \tilde u\rangle_{L^2(\Omega)} + \langle v,\tilde v\rangle _{L^2(\Omega)} \quad \text{for } (u, v), (\tilde u, \tilde v) \in H \, ,
\end{split}
\]
and we consider the linear operator $\A \colon D(\A) \to H$ 
\[ \label{eq:L}
\left\{
\begin{aligned}
& D(\A) := \{ (u,v) \in H \ : \ u \in H^2(\Omega) \text{ with } \Tr(\nabla u \cdot \nu) = 0 \text{ on } \de \Omega  \, ,  \  v \in H^1(\Omega) \} \, , \\
& \A(u,v) := (-v, - \Lap u + v + u) \, , \text{ for } (u,v) \in D(\A) \, .
\end{aligned}
\right.    
\]

In this framework, problem~\eqref{eq:recast PDE full} reads
\[    \label{eq:recast PDE compact}
\begin{dcases}
\frac{\d U}{\d t}(t)  + \A U(t) +  F(U(t)) = 0 \, , & t \in  (0,+\infty)  \, , \\
U(0) = U_0 \, ,  &   
\end{dcases}
\] 
with $U = (u,v)$ and $U_0 = (u_0, v_0)$.

\begin{remark}[$\mathscr A$ is maximal monotone] \label{rmk:maximal monotone}
	The operator $\A$ is maximal monotone according to Definition~\ref{def:maximal monotone}. To see this, integrating by parts we obtain for all $(u,v) \in D(\A)$
	\[ \label{eq:skew-adjoint}
	\begin{split}
	\langle \A(u,v), (u,v) \rangle_H &  = \langle (-v, - \Lap u + v + u), (u,v) \rangle_H   \\
	& = - \int_\Omega v u \, \d x - \int_\Omega \nabla v \cdot \nabla u \, \d x - \int_\Omega \Lap u v \, \d x  + \int_\Omega v^2 \, \d x + \int_\Omega u v \, \d x \\
	& = \int_\Omega v^2 \, \d x \geq 0 \, .
	\end{split}
	\]
	Moreover, for every $(f,g) \in H$ there exists $(u,v) \in D(\A)$ such that $(u,v) + \A(u,v) = (f,g)$. Indeed, this condition is equivalent to the existence of solutions to the problem 
	\[
	\begin{cases}
	u - v = f \, , \\ 
	v - \Lap u + v + u = g \, ,
	\end{cases}
	\iff 
	\begin{cases}
	v = u - f \, , \\ 
	3 u  - \Lap u  = 2f + g \, .
	\end{cases}
	\]
	Since $2f + g \in L^2(\Omega)$, there exists a unique solution to 
	\[
	\begin{cases}
	3 u  - \Lap u  = 2f + g \, , & x \in \Omega \, , \\
	\nabla u \cdot \nu = 0 \, , & x \in \de \Omega \, ,
	\end{cases}
	\]
	which, by elliptic regularity theory, belongs to $H^2(\Omega)$ and satisfies the previous equation in a classical sense. Then, setting $v = u - f$, we have $v \in H^1(\Omega)$, hence $(u,v) \in D(\A)$. This proves that $\A$ is maximal monotone.
\end{remark}

\begin{remark}[Lipschitz continuity of the nonlinearity]
	We observe that the nonlinear perturbation $F \colon H \to H$ given by
	\[ \label{eq:Fk}
	F(U) = (0, -u + \Phi'(u)) \, , \quad \text{for } U = (u,v) \in H  \, , 
	\]
	is Lipschitz continuous. Indeed, the continuity of $\Phi'$ yields 
	\[ \label{eq:F lipschitz}
	\begin{split}
	\|F(U) - F(\tilde U)\|_H & = \| -u + \Phi'(u) + \tilde u - \Phi'(\tilde u) \|_{L^2(\Omega)} \\
	& \leq \|u - \tilde u\|_{L^2(\Omega)}  + \Lip(\Phi') \|u - \tilde u\|_{L^2(\Omega)} \\
	& \leq (1 + \Lip(\Phi')) \|U -  \tilde U \|_{H} \, ,
	\end{split}
	\]
	for every $U = (u, v), \ \tilde U = (\tilde u, \tilde v) \in  H$. Notice that the Lipschitz constant explodes in the limit $k \to +\infty$. 
\end{remark}

\subsection{Solutions with regular initial data}
\label{ssec:sol-regd}

In this subsection, we study the well-posedness of~\eqref{eq:recast PDE compact} under higher regularity assumptions on initial data. Specifically, we assume  that 
\[ \label{eq:regularity initial data}
  u_0 \in H^2(\Omega) \text{ with } \Tr(\nabla u_0 \cdot \nu) = 0 \text{ on } \de \Omega\, , \quad   \quad v_0 \in H^1(\Omega) \, ,
\]
\ie, $(u_0,v_0) \in D(\A)$.

Thanks to the observations in Subsection~\ref{ssec:ODE formulation}, we obtain the following result.

\begin{proposition}[Well-posedness of the problem with regular initial data] \label{prop:well-posedness approximation with regular data}
  Let $U_0 = (u_0,v_0)$ and assume that $u_0,v_0$ satisfy~\eqref{eq:regularity initial data}. Then there exists a unique solution $U = (u,v) \in C^1([0,+\infty);H) \cap C([0,+\infty); D(\A))$ to~\eqref{eq:recast PDE compact} with initial datum $u(0) = U_0$, in the sense of Proposition~\ref{prop:solution less regular data}. In particular, $U = (u, v)$ satisfies~\eqref{eq:recast PDE full}.
\end{proposition}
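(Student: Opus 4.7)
The statement is a direct application of the abstract semilinear well-posedness result of Proposition~\ref{prop:well posedness nonlinear abstract} to the Cauchy problem~\eqref{eq:recast PDE compact}, so the real content is checking the two structural hypotheses, both of which are already essentially available in the preceding Remarks~\ref{rmk:maximal monotone} and~(4.7).

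\textbf{Step 1: hypotheses on $\A$ and $F$.} First, I would quote Remark~\ref{rmk:maximal monotone} to conclude that $\A$ is maximal monotone on $H = H^1(\Omega) \x L^2(\Omega)$: the monotonicity reduces by integration by parts to $\langle \A(u,v),(u,v)\rangle_H = \|v\|_{L^2(\Omega)}^2 \geq 0$ on $D(\A)$, and surjectivity of $\Id + \A$ reduces to solving the coercive Neumann problem $3u - \Lap u = 2f + g$ with $\nabla u \cdot \nu = 0$ on $\de \Omega$, whose solution sits in $H^2(\Omega)$ by elliptic regularity (using the $C^2$-regularity of $\de \Omega$ when $d \geq 2$). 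Second, the nonlinearity $F(u,v) = (0, -u + \Phi'(u))$ is globally Lipschitz on $H$ by property~\ref{it:p2} of $\Phi$, with $\Lip(F) \leq 1 + \Lip(\Phi') \leq 1 + 2(u_*\sigma - 1)$, as shown in the computation preceding the statement.

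\textbf{Step 2: applying the abstract theorem.} With these two facts in hand, and since $U_0 = (u_0, v_0) \in D(\A)$ by~\eqref{eq:regularity initial data}, Proposition~\ref{prop:well posedness nonlinear abstract} produces a unique $U = (u,v) \in C^1([0,+\infty); H) \cap C([0,+\infty); D(\A))$ solving $\frac{\d U}{\d t} + \A U + F(U) = 0$ with $U(0) = U_0$. I would then unpack this equation componentwise: the first coordinate yields $\frac{\d u}{\d t} = v$ in $H^1(\Omega)$, while the second coordinate yields $\frac{\d v}{\d t} = \Lap u - v - \Phi'(u)$ in $L^2(\Omega)$, where the cancellation $+u - u = 0$ comes from the form of $\A$ and $F$ chosen in~\eqref{eq:L}--\eqref{eq:Fk}. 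Since $u(t) \in D(\A)$ means $u(t) \in H^2(\Omega)$ with $\Tr(\nabla u(t) \cdot \nu) = 0$ on $\de \Omega$ for every $t$, the Laplacian acts classically and the Neumann boundary condition is satisfied pointwise in $t$; combining with $u(0) = u_0$, $v(0) = v_0$ recovers the system~\eqref{eq:recast PDE full}.

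\textbf{Main obstacle.} There is no genuine obstacle; the proof is essentially bookkeeping. The only thing one must be careful about is the algebraic trick of writing the nonlinearity as $-u + \Phi'(u)$ rather than $\Phi'(u)$ alone, which is what lets the abstract operator $\A$ be maximal monotone on $H$ with its natural $H^1 \x L^2$ inner product~\eqref{eq:scalar product in H} (without the $+u$ in $\A$, the cross term $\langle \nabla v, \nabla u\rangle$ in the monotonicity computation would not cancel cleanly). Once that splitting is fixed, everything reduces to invoking Proposition~\ref{prop:well posedness nonlinear abstract} and reading off the components.
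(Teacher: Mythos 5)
Your argument is correct and follows exactly the paper's route: the paper's proof is the one-line observation that the verifications of Subsection~\ref{ssec:ODE formulation} (maximal monotonicity of $\A$ and global Lipschitz continuity of $F$) combined with Proposition~\ref{prop:well posedness nonlinear abstract} and the fact that $U_0 \in D(\A)$ yield the claim. Your additional unpacking of the components and the remark on the $-u + \Phi'(u)$ splitting are consistent with what the paper does implicitly.
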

\begin{proof} 
The result follows by Subsection~\ref{ssec:ODE formulation} and Proposition~\ref{prop:well posedness nonlinear abstract}, observing that $U_0 = (u_0,v_0) \in D(\A)$.
\end{proof}

In the next result, we deduce energy estimates on the unique strong solution to~\eqref{eq:PDE}. 

%We introduce the energy corresponding to the approximate problems $E \colon H^1(\Omega) \x L^2(\Omega) \to [0,+\infty)$ defined~by  
%    \[ \label{def:E}
%        E(u, v) := \frac{1}{2}\|v\|_{L^2(\Omega)}^2 + \frac{1}{2}\|\nabla u\|_{L^2(\Omega)}^2 + \| \Phi(u)\|_{L^1(\Omega)} \, , \quad \text{for all } (u,v) \in H^1(\Omega) \x L^2(\Omega)   \, .
%    \]

\begin{proposition}[Energy balance for regular initial data]
	 \label{prop:energy balance regular initial data}
 Assume that $u_0,v_0$ satisfy~\eqref{eq:regularity initial data}. Let $(u,v)$ be the unique strong solution to~\eqref{eq:PDE} with initial data  $u_0,v_0$ obtained in Proposition~\ref{prop:well-posedness approximation with regular data}. Let $E$ and $J$ be as in~\eqref{def:E} and~\eqref{def:J}, respectively. Then the following energy balances hold true:
  \[ 
        \label{eq:approximate energy estimate}
  E(u(t),\de_t u(t)) + \int_0^t \|\de_t u(s)\|_{L^2(\Omega)}^2 \, \d s = E(u_0,v_0) \, , \quad \text{for every } t \in [0,+\infty)\, , 
  \]
  and
  \[ \label{eq:estimate on Ik}
  \begin{split}
      & J(u(t),\de_t u(t)) + \int_0^t \Big( \|\nabla u(s)\|^2_{L^2(\Omega)} + \langle u(s), \Phi'(u(s)) \rangle_{L^2(\Omega)} \Big) \, \d s \\
      & \quad = J(u_0,v_0) + \int_0^t \|\de_t u(s)\|_{L^2(\Omega)}^2 \, \d s \, , \quad \text{for every } t \in [0,+\infty) \, . 
  \end{split}
  \]
\end{proposition}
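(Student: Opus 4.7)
My plan is to obtain both identities by testing the equation against a suitable function and integrating by parts, exploiting the higher regularity $U = (u,v) \in C^1([0,+\infty);H) \cap C([0,+\infty);D(\mathscr A))$ granted by Proposition~\ref{prop:well-posedness approximation with regular data}. In particular, $u \in C([0,+\infty);H^2(\Omega))$ with $\nabla u \cdot \nu = 0$ on $\de \Omega$ in the sense of traces, $\de_t u = v \in C([0,+\infty);H^1(\Omega))$, and $\de_{tt}^2 u = \de_t v \in C([0,+\infty);L^2(\Omega))$, so the equation $\de_{tt}^2 u - \Lap u + \de_t u + \Phi'(u) = 0$ holds as an identity in $C([0,+\infty);L^2(\Omega))$.

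For the balance \eqref{eq:approximate energy estimate}, I would take the $L^2(\Omega)$-pairing of the equation with $\de_t u$. Using that $v \in C^1([0,+\infty);L^2(\Omega))$, the term $\langle \de_t v, v\rangle_{L^2(\Omega)}$ equals $\tfrac12 \tfrac{\d}{\d t}\|v\|_{L^2(\Omega)}^2$. Since $u \in C^1([0,+\infty);H^1(\Omega))$ and $u(t) \in H^2(\Omega)$ with vanishing Neumann trace, integration by parts gives $\langle -\Lap u, \de_t u\rangle_{L^2(\Omega)} = \langle \nabla u,\nabla \de_t u\rangle_{L^2(\Omega)} = \tfrac12 \tfrac{\d}{\d t}\|\nabla u\|_{L^2(\Omega)}^2$. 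Finally, using that $\Phi \in C^1(\R;\R)$ with $\Phi'$ Lipschitz and $u \in C^1([0,+\infty);L^2(\Omega))$, a standard chain-rule argument (e.g., by approximation of $u$ by smooth functions and passage to the limit, exploiting $|\Phi(u)| \leq u_*^2$ and $|\Phi'(u)| \leq 2$) yields $\langle \Phi'(u),\de_t u\rangle_{L^2(\Omega)} = \tfrac{\d}{\d t}\int_\Omega \Phi(u)\,\d x$. Summing these identities gives $\tfrac{\d}{\d t} E(u,\de_t u) + \|\de_t u\|_{L^2(\Omega)}^2 = 0$, and integration over $[0,t]$ produces \eqref{eq:approximate energy estimate}.

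For the balance \eqref{eq:estimate on Ik}, I would proceed analogously but test against $u$ itself. The key algebraic step is the integration-by-parts-in-time identity $\langle \de_t v, u\rangle_{L^2(\Omega)} = \tfrac{\d}{\d t}\langle v,u\rangle_{L^2(\Omega)} - \|v\|_{L^2(\Omega)}^2$, which is legitimate since $v \in C^1([0,+\infty);L^2(\Omega))$ and $u \in C^1([0,+\infty);L^2(\Omega))$. Combined with $\langle -\Lap u, u\rangle_{L^2(\Omega)} = \|\nabla u\|_{L^2(\Omega)}^2$ (again using the Neumann boundary condition and $u\in H^2$) and $\langle \de_t u, u\rangle_{L^2(\Omega)} = \tfrac12 \tfrac{\d}{\d t}\|u\|_{L^2(\Omega)}^2$, we arrive at
\[
\tfrac{\d}{\d t} J(u,\de_t u) + \|\nabla u\|_{L^2(\Omega)}^2 + \langle u, \Phi'(u)\rangle_{L^2(\Omega)} = \|\de_t u\|_{L^2(\Omega)}^2 ,
\]
and integration over $[0,t]$ yields \eqref{eq:estimate on Ik}.

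The computations themselves are routine; the only mild subtlety is justifying the chain rule $\tfrac{\d}{\d t}\int_\Omega \Phi(u)\,\d x = \int_\Omega \Phi'(u)\de_t u\,\d x$ at the given regularity. This is where I would spend a few lines: since $\Phi'$ is globally Lipschitz by property~\ref{it:p2} and $u \in C^1([0,+\infty);L^2(\Omega))$, the map $t \mapsto \int_\Omega \Phi(u(t,x))\,\d x$ is absolutely continuous with the claimed derivative, which can be verified either by approximating $u$ by piecewise-smooth functions in $C^1([0,+\infty);L^2(\Omega))$ or by applying the chain rule in $W^{1,2}$ (valid because $\Phi$ is $C^1$ with a Lipschitz derivative). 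Everything else is a direct consequence of the product rule, the divergence theorem, and the Neumann boundary condition encoded in $D(\mathscr A)$.
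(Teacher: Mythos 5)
Your proposal is correct and follows essentially the same route as the paper: the paper differentiates $E(u(t),\de_t u(t))$ and $J(u(t),\de_t u(t))$ in time and substitutes the equation, which is algebraically identical to your testing against $\de_t u$ and against $u$, with the same spatial integration by parts using the Neumann condition encoded in $D(\A)$. Your extra care in justifying the chain rule $\tfrac{\d}{\d t}\int_\Omega \Phi(u)\,\d x = \langle \Phi'(u),\de_t u\rangle_{L^2(\Omega)}$ is a welcome refinement of a step the paper takes for granted, but it does not change the argument.
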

\begin{proof}
  Let us prove~\eqref{eq:approximate energy estimate}. By~\eqref{eq:recast PDE full}, by the regularity of $u$, and integrating by parts, we get that for every $t \in (0,+\infty)$ 
  \[
      \begin{split}
          & \frac{\d }{\d t}\Big(E(u(t),\de_t u(t))\Big) \\
          &  \quad  = \Big\langle \frac{\d^2 u}{\d t^2}(t) ,  \frac{\d u}{\d t}(t) \Big\rangle_{L^2(\Omega)} + \Big\langle \frac{\d \nabla u}{\d t}(t) , \nabla u(t) \Big\rangle_{L^2(\Omega)} + \Big\langle \Phi'(u(t)) , \frac{\d u}{\d t}(t) \Big\rangle_{L^2(\Omega)}  \\
          & \quad = \langle \Lap u(t) - v(t) - \Phi'(u(t)) , v(t) \rangle_{L^2(\Omega)} \\
          & \quad \quad + \langle \nabla v(t) , \nabla u(t) \rangle_{L^2(\Omega)} + \langle \Phi'(u(t)) , v(t) \rangle_{L^2(\Omega)}  = - \|v(t)\|_{L^2(\Omega)}^2 = - \Big\|\frac{\d u}{\d t}(t)\Big\|_{L^2(\Omega)}^2  \, .
      \end{split}
  \]
  Integrating in time we get~\eqref{eq:approximate energy estimate}.

  Let us prove~\eqref{eq:estimate on Ik}.  By~\eqref{eq:recast PDE full} and integrating by parts, we obtain that 
  \[
      \begin{split}
          & \frac{\d}{\d t} \Big( J(u(t),\de_t u(t)) \Big)  =  \Big\langle u(t), \frac{\d u}{\d t}(t) \Big\rangle_{L^2(\Omega)}  + \Big\|\frac{\d u}{\d t}(t)\Big\|^2_{L^2(\Omega)} +  \Big\langle u(t), \frac{\d^2 u}{\d t^2}(t) \Big\rangle_{L^2(\Omega)} \\
          & \quad =  \Big\langle u(t), \frac{\d v}{\d t}(t) + v(t) \Big\rangle_{L^2(\Omega)} + \Big\|\frac{\d u}{\d t}(t)\Big\|^2_{L^2(\Omega)} \\
          & \quad = \langle u(t), \Lap u(u(t)) - \Phi'(u(t)) \rangle_{L^2(\Omega)} + \Big\|\frac{\d u}{\d t}(t)\Big\|^2_{L^2(\Omega)} \\
          &  \quad = - \|\nabla u(t)\|_{L^2(\Omega)}^2  - \langle u(t), \Phi'(u(t)) \rangle_{L^2(\Omega)} + \Big\|\frac{\d u}{\d t}(t)\Big\|^2_{L^2(\Omega)} \, .
      \end{split}
  \]
  Integrating in time we get~\eqref{eq:estimate on Ik}.
\end{proof}

\subsection{Solution with initial data in energy space} 
\label{ssec:sol-enespace}

In this subsection we drop the higher regularity assumptions~\eqref{eq:regularity initial data}. 

\begin{proof}[Proof of Theorem \ref{prop:well-posedness approximation initial data in energy space}]
	We split the proof into several steps. 
	
	\begin{steps}
		\item (Existence and uniqueness of mild solutions)   Existence and uniqueness are obtained by Proposition~\ref{prop:solution less regular data}, which guarantees that $U = (u,v) \in C([0,+\infty);H)$ is the unique curve satisfying 
  \[ \label{eq:integral formulation Uk}
    U(t) = S_\A(t) U_0 - \int _0^t S_\A(t-s) F(U(s)) \, \d s \, , \quad t \in [0,+\infty)\, .
    \]
  
  \item (Weak solution) We need to show that $u$ is a weak solution in the sense given in the statement.  In Subsection~\ref{ssec:ODE formulation} we have shown that the operator $\A$ defined in~\eqref{eq:L} is maximal monotone. As a consequence, its domain $D(\A)$ is dense in $H=H^1(\Omega) \x L^2(\Omega)$, see~\cite[Proposition~I.1.1.2]{Har}. Hence, there exists a sequence $U_0^j = (u_0^j,v_0^j) \in D(\A)$ such that $U_0^j =  (u_0^j,v_0^j) \to U_0 = (u_0, v_0)$ in $H$ as $j \to +\infty$. Let $U^j = (u^j, v^j)$ be the unique solution to~\eqref{eq:recast PDE full} with initial datum $U^j(0) = U^j_0$, in the sense of Proposition~\ref{prop:well posedness nonlinear abstract}, satisfying 
  \[ \label{eq:integral formulation Ukj}
    U^j(t) = S_\A(t) U^j_0 - \int _0^t S_\A(t-s) F(U^j(s)) \, \d s \, , \quad t \in [0,+\infty)\, .
    \]
  Notice that, by Proposition~\ref{prop:well posedness nonlinear abstract}, $U^j =(u^j,v^j)\in C^1([0,+\infty);H) \cap C([0,+\infty); D(\A))$ and it satisfies~\eqref{eq:recast PDE compact} in a strong sense. This is the unique solution provided by Proposition~\ref{prop:well-posedness approximation with regular data}.
    
    Let us show that $U^j = (u^j,v^j)$ is a Cauchy sequence in $C([0,T];H)$ with respect to $j$ for every $T > 0$. To do so, we exploit~\eqref{eq:integral formulation Ukj}, the contraction property of $\{S_\A(t)\}_{t \geq 0}$, and~\eqref{eq:F lipschitz} to estimate for $i$ and $j$ and for every $t \in (0,+\infty)$ 
    \[
      \begin{split}
        \|U^i(t) - U^j(t)\|_H & = \Big\| S_\A(t) ( U^i_0 - U^j_0) - \int _0^t S_\A(t-s) ( F(U^i(s)) - F(U^j(s)) ) \, \d s \Big\|_H   \\
        & \leq \| U^i_0 - U^j_0 \|_H + \int_0^t (1+\Lip(\Phi'))\|U^i(s) - U^j(s)\|_H \, \d s  \, .
      \end{split}
    \]
    By Gr\"onwall's inequality, given $T > 0$, we deduce that for every $t \in [0,T]$
    \[
      \|U^i(t) - U^j(t)\|_H \leq \|U^i_0 - U^j_0\|_H e^{(1+\Lip(\Phi'))t} \leq \|U^i_0 - U^j_0\|_H^2 e^{(1+\Lip(\Phi'))T} \, ,
    \]
Since $U^j_0$ converges, we conclude that $U^j$ is a Cauchy sequence with respect to $j$ in $C([0,T];H)$. Since~\eqref{eq:integral formulation Uk} is stable with respect to the uniform convergence, we conclude that  $U^j = (u^j,v^j) \to U = (u,v)$ in $C([0,T];H)$ as $j \to +\infty$ for every $T > 0$. Furthermore, since $u^j \in C^1([0,+\infty);H^1(\Omega))$ and $\de_t u^j  = v^j \to v$ in $C([0,T];L^2(\Omega))$ as $j \to + \infty$ for every $T > 0$, we deduce that $u \in C^1([0,+\infty);L^2(\Omega))$ with derivative $\de_t u = v$ in $L^2(\Omega)$. Hence $u \in C([0,+\infty);H^1(\Omega)) \cap C^1([0,+\infty);L^2(\Omega))$ and~\eqref{eq:ODE in L2} is satisfied.

Let us prove Remark~\ref{rem:weak solution}. We multiply by $\varphi \in C^\infty_c(\R \x \R^d)$ the equation~\eqref{eq:recast PDE full} solved by $U^j = (u^j,v^j)$ in a strong sense and integrate by parts to obtain that for every $t \in (0,+\infty)$
\[
  \begin{split}
  0 & = \Big\langle \frac{\d^2 U^j}{\d t^2}(t), \varphi(t) \Big\rangle_{L^2(\Omega)} \!\! - \langle \Lap U^j(t), \varphi(t) \rangle_{L^2(\Omega)} + \Big\langle \frac{\d U^j}{\d t}(t), \varphi(t) \Big\rangle_{L^2(\Omega)} \!\! + \langle \Phi'(U^j(t)) , \varphi(t) \rangle_{L^2(\Omega)}  \\
  & = \frac{\d}{\d t} \Big\langle \frac{\d U^j}{\d t}(t), \varphi(t) \Big\rangle_{L^2(\Omega)} -  \Big\langle \frac{\d U^j}{\d t}(t),  \frac{\d \varphi}{\d t}(t) \Big\rangle_{L^2(\Omega)} + \langle \nabla U^j(t), \nabla \varphi(t) \rangle_{L^2(\Omega)} \\
  & \quad + \Big\langle \frac{\d U^j}{\d t}(t), \varphi(t) \Big\rangle_{L^2(\Omega)} + \langle \Phi'(U^j(t)) , \varphi(t) \rangle_{L^2(\Omega)} \, .
  \end{split}
\]
% \footnote{We are using the equalities $\frac{\d}{\d t}\big\langle \frac{\d U^j}{\d t}(t),  \varphi(t)  \big\rangle_{L^2(\Omega)}-  \frac{\d}{\d t}\big\langle U^j(t), \frac{\d \varphi}{\d t}(t)  \big\rangle_{L^2(\Omega)} + \big\langle U^j(t), \frac{\d^2 \varphi}{\d t^2}(t)  \big\rangle_{L^2(\Omega)} =  \big\langle \frac{\d^2 U^j}{\d t^2}(t),  \varphi(t)  \big\rangle_{L^2(\Omega)}$ and $\big\langle  \frac{\d U^j}{\d t}(t),  \varphi(t)  \big\rangle_{L^2(\Omega)} = \frac{\d}{\d t}\big\langle  U^j(t),  \varphi(t)  \big\rangle_{L^2(\Omega)} - \big\langle  U^j(t),  \frac{\d \varphi}{\d t}(t)  \big\rangle_{L^2(\Omega)}$.}
    % \[  
    %     \begin{split}
    %         0 & = \Big\langle \frac{\d^2 U^j}{\d t^2}(t), \varphi(t) \Big\rangle_{L^2(\Omega)} \!\! - \langle \Lap U^j(t), \varphi(t) \rangle_{L^2(\Omega)} + \Big\langle \frac{\d U^j}{\d t}(t), \varphi(t) \Big\rangle_{L^2(\Omega)} \!\! + \langle \Phi'(U^j(t)) , \varphi(t) \rangle_{L^2(\Omega)} \\
    %         &  =   \frac{\d}{\d t}\langle v^j(t), \varphi(t) \rangle_{L^2(\Omega)} -  \frac{\d}{\d t}\Big\langle U^j(t), \frac{\d \varphi}{\d t}(t)  \Big\rangle_{L^2(\Omega)} + \frac{\d}{\d t}\big\langle  U^j(t),  \varphi(t)  \big\rangle_{L^2(\Omega)}  \\
    %         & \quad + \Big\langle U^j(t), \frac{\d^2 \varphi}{\d t^2}(t)   \Big\rangle_{L^2(\Omega)} + \langle \nabla U^j(t), \nabla \varphi(t) \rangle_{L^2(\Omega)}    - \big\langle  U^j(t),  \frac{\d \varphi}{\d t}(t)  \big\rangle_{L^2(\Omega)} \\
    %         & \quad +  \langle \Phi'(U^j(t)) , \varphi(t) \rangle_{L^2(\Omega)} \, .
    %     \end{split} 
    % \] 
    Integrating in time, we get that
    \[
      \begin{split}
      0 & = \langle v_0, \varphi(0) \rangle_{L^2(\Omega)} +  \int_0^{+\infty}\Big\langle v^j(t), - \frac{\d \varphi}{\d t}(t) + \varphi(t)\Big\rangle_{L^2(\Omega)}  \, \d t + \int_0^{+\infty} \langle \nabla U^j(t), \nabla \varphi(t) \rangle_{L^2(\Omega)} \\
      & \quad + \int_0^{+\infty} \langle \Phi'(U^j(t)) , \varphi(t) \rangle_{L^2(\Omega)} \, \d t \, .
      \end{split}
    \]
    % \[  
    %     \begin{split}
    %           0  & =  - \langle v^j_0, \varphi(0) \rangle_{L^2(\Omega)} + \Big\langle u^j_0, \frac{\d \varphi}{\d t}(0) \Big\rangle_{L^2(\Omega)} - \langle u^j_0, \varphi(0) \rangle_{L^2(\Omega)} \\
    %         & \quad  + \int_0^{+\infty} \Big\langle U^j(t), \frac{\d^2 \varphi}{\d t^2}(t) \Big\rangle_{L^2(\Omega)} \, \d t  + \int_0^{+\infty} \! \langle \nabla U^j(t), \nabla \varphi(t) \rangle_{L^2(\Omega)} \, \d t \\
    %         &  \quad - \int_0^{+\infty} \! \Big\langle U^j(t), \frac{\d \varphi}{\d t}(t) \Big\rangle_{L^2(\Omega)} \, \d t  +  \int_0^{+\infty} \! \langle \Phi'(U^j(t)) , \varphi(t) \rangle_{L^2(\Omega)}  \, \d t\, .
    %     \end{split} 
    % \] 
    Letting $j \to +\infty$, we conclude that
    % \[  
    %     \begin{split}
    %         0  & =  - \langle v_0, \varphi(0) \rangle_{L^2(\Omega)} + \Big\langle u_0, \frac{\d \varphi}{\d t}(0) \Big\rangle_{L^2(\Omega)} - \langle u_0, \varphi(0) \rangle_{L^2(\Omega)} \\
    %       & \quad  + \int_0^{+\infty} \Big\langle u(t), \frac{\d^2 \varphi}{\d t^2}(t) \Big\rangle_{L^2(\Omega)} \, \d t  + \int_0^{+\infty} \! \langle \nabla u(t), \nabla \varphi(t) \rangle_{L^2(\Omega)} \, \d t \\
    %       &  \quad - \int_0^{+\infty} \! \Big\langle u(t), \frac{\d \varphi}{\d t}(t) \Big\rangle_{L^2(\Omega)} \, \d t  +  \int_0^{+\infty} \! \langle \Phi'(u(t)) , \varphi(t) \rangle_{L^2(\Omega)}  \, \d t \, .
    %   \end{split} 
    % \] 
    \[
      \begin{split}
      0 & = \langle v_0, \varphi(0) \rangle_{L^2(\Omega)} +  \int_0^{+\infty}\Big\langle v(t), - \frac{\d \varphi}{\d t}(t) + \varphi(t)\Big\rangle_{L^2(\Omega)}  \, \d t + \int_0^{+\infty} \langle \nabla u(t), \nabla \varphi(t) \rangle_{L^2(\Omega)} \\
      & \quad + \int_0^{+\infty} \langle \Phi'(u(t)) , \varphi(t) \rangle_{L^2(\Omega)} \, \d t \, .
      \end{split}
    \]
    % This means that $u$ is a weak solution to~\eqref{eq:PDE}, \ie,
    % \[   \label{eq:weak solution step 2}
    %     \begin{split} 
    % & \int_0^{+\infty}\int_\Omega u(t,x)\de^2_{tt} \varphi(t,x) \, \d x \, \d t  + \int_0^{+\infty} \int_\Omega \nabla u(t,x) \cdot \nabla \varphi(t,x) \, \d x \, \d t \\
    %       &  \quad \quad - \int_0^{+\infty} \int_\Omega u(t,x) \de_t \varphi(t,x) \, \d x \, \d t  +  \int_0^{+\infty} \int_\Omega \Phi'(u(t,x))  \varphi(t,x) \, \d x \, \d t \\
    %       & \quad = \int_\Omega u_0(x) \varphi(0,x) \, \d x - \int_\Omega u_0(x) \de_t \varphi(0,x)  \, \d x  + \int_\Omega v_0(x) \varphi(0,x) \, \d x \, .
    %   \end{split} 
    % \] 
    This concludes the proof.
\end{steps} 
\end{proof}

Now we turn to the proof of the energy balance. 

\begin{proof}[Proof of Theorem \ref{th:energy}]
The proof of the energy balance is based on the approximation used in the proof of Theorem~\ref{prop:well-posedness approximation initial data in energy space}. Let $U^j_0 = (u^j_0,v^j_0) \to U_0 = (u_0,v_0)$ in $H$ and let $U^j = (u^j, v^j)$ the corresponding solutions obtained therein satisfying  $U^j = (u^j, v^j) \to U = (u,v)$ in $C([0,T];H)$ as $j \to +\infty$ for every $T > 0$. By Proposition~\ref{prop:energy balance regular initial data}  we have that
  \[
    E(U^j(t),\de_t U^j(t)) + \int_0^t  \|\de_t U^j (s) \|_{L^2(\Omega)}^2 \, \d s  = E(u^j_0,v^j_0) \, ,\quad \text{for every } t \in [0,+\infty) \, ,
\] 
and 
\[
  \begin{split}
      & J(U^j(t),\de_t U^j(t)) + \int_0^t \Big( \|\nabla U^j(s)\|^2_{L^2(\Omega)} + \langle U^j(s), \Phi'(U^j(s)) \rangle_{L^2(\Omega)} \Big) \, \d s \\
      & \quad = J(u^j_0,v^j_0) + \int_0^t \|\de_t U^j(s)\|_{L^2(\Omega)}^2 \, \d s \, , \quad \text{for every } t \in [0,+\infty) \, .     
  \end{split}
\]
Exploiting the convergences $U^j_0 = (u^j_0,v^j_0) \to U_0 = (u_0,v_0)$ in $H$, $u^j \to u$ in $C([0,T];H^1(\Omega))$, and $\de_t u^j  \to \de_t u$ in $C([0,T];L^2(\Omega))$, we pass to the limit as $j \to +\infty$ to get~\eqref{eq:approximate energy estimate II} and~\eqref{eq:estimate on Ik II}.

\end{proof}

\section{Proof of the long-time asymptotics} 
\label{sec:decay-approx}

In this section, we study the long-time limit of the unique solution obtained in Theorem~\ref{prop:well-posedness approximation initial data in energy space}. 

\subsection{Qualitative result}
\label{ssec:qualitative}

In Theorem~\ref{prop:long-time limit}, we claimed that the limit $\lim_{t \to +\infty} u(t)$ exists and is a solution to the stationary problem. The proof requires several intermediate results. 

Before discussion the case of the semilinear problem, we recall the asymptotic behavior of solutions to the linear homogeneous PDE
\[   \label{eq:linear homogeneous}
    \begin{cases}
        \de^2_{tt} u - \Lap u + \de_t u  + u = 0 \, , & (t,x) \in (0,+\infty) \x \Omega   \, , \\
        \nabla u \cdot \nu = 0  \, , & (t,x) \in  (0,+\infty) \x \de \Omega \, , \\
        u(0,x) = u_0(x) \, , \  \de_t u(0,x) = v_0(x) \, , &  x \in \Omega   \, .
    \end{cases}
\]   

\begin{proposition}[Long-time asymptotics for the linear homogeneous problem] \label{prop:asymptotic linear homogeneous}
  Let $\{S_\A(t)\}_{t \geq 0}$ be the continuous semigroup of contractions generated by $\A$, see Proposition~\ref{prop:semigroup of contractions}. Then there exists a constant $c > 0$ such that 
  \[
  \|S_\A(t)\|_{\L(H)} \leq c \,  e^{-t/2} \, ,  
  \]
  where $\|S_\A(t)\|_{\L(H)}$ denotes the operator norm of $S_\A(t) \colon H \to H$. 
\end{proposition}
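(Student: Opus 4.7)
The plan is to construct a Lyapunov functional equivalent to $\tfrac12\|\cdot\|_H^2$ whose time derivative along the linear homogeneous flow equals its own negative, thereby yielding the rate $e^{-t/2}$ after taking square roots. Guided by the dissipation structure of~\eqref{eq:linear homogeneous} and by the monotonicity computation in Remark~\ref{rmk:maximal monotone}, I would introduce the modified energy
\[
\mathcal{E}(u,v) := \tfrac{1}{2}\|v\|_{L^2(\Omega)}^2 + \tfrac{1}{2}\|\nabla u\|_{L^2(\Omega)}^2 + \tfrac{1}{2}\|u\|_{L^2(\Omega)}^2 + \tfrac{1}{2}\langle u,v\rangle_{L^2(\Omega)},
\]
namely half the squared $H$-norm perturbed by the cross term $\tfrac{1}{2}\langle u,v\rangle_{L^2(\Omega)}$. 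Since $|\langle u,v\rangle_{L^2(\Omega)}|\le\tfrac{1}{2}(\|u\|_{L^2(\Omega)}^2+\|v\|_{L^2(\Omega)}^2)\le\tfrac{1}{2}\|(u,v)\|_H^2$, this perturbation is innocuous at the level of equivalence of norms: one has the two-sided bound $\tfrac14\|(u,v)\|_H^2\le \mathcal{E}(u,v)\le\tfrac34\|(u,v)\|_H^2$.

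The natural approach is then to work first with regular data $U_0=(u_0,v_0)\in D(\A)$, so that $U(t)=S_\A(t)U_0=(u(t),v(t))$ is a classical solution of $\de_t u=v$, $\de_t v=\Lap u-v-u$ in $\Omega$ with $\nabla u\cdot\nu=0$ on $\de\Omega$, and to differentiate $\mathcal{E}(u(t),v(t))$ in time. Repeating essentially verbatim the computation in the proof of Proposition~\ref{prop:energy balance regular initial data} (now for the homogeneous linear problem, with the additional $+u$ producing the additional $\tfrac12\|u\|_{L^2(\Omega)}^2$ term in the energy) one obtains
\[
\frac{d}{dt}\Big(\tfrac{1}{2}\|(u,v)\|_H^2\Big) = -\|v\|_{L^2(\Omega)}^2,
\]
while a direct integration by parts exploiting the Neumann condition yields
\[
\frac{d}{dt}\langle u,v\rangle_{L^2(\Omega)} = \|v\|_{L^2(\Omega)}^2 - \|\nabla u\|_{L^2(\Omega)}^2 - \|u\|_{L^2(\Omega)}^2 - \langle u,v\rangle_{L^2(\Omega)}.
\]
Combining these with the coefficient $\tfrac{1}{2}$ on the cross term, all quadratic contributions regroup exactly so that
\[
\frac{d}{dt}\mathcal{E}(u(t),v(t))=-\mathcal{E}(u(t),v(t)).
\]

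Integration then gives $\mathcal{E}(u(t),v(t))=e^{-t}\mathcal{E}(u_0,v_0)$, and the two-sided equivalence above delivers
\[
\|S_\A(t)U_0\|_H^2 \le 4\mathcal{E}(u(t),v(t)) = 4e^{-t}\mathcal{E}(u_0,v_0) \le 3e^{-t}\|U_0\|_H^2,
\]
proving the claim with $c=\sqrt{3}$ for $U_0\in D(\A)$. The extension to arbitrary $U_0\in H$ follows from the density of $D(\A)$ in $H$ (recalled in the proof of Theorem~\ref{prop:well-posedness approximation initial data in energy space}) together with the contraction property of $\{S_\A(t)\}_{t\ge 0}$, passing the exponential bound to the limit on approximating sequences. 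I do not foresee a genuine obstacle: the only non-routine ingredient is the precise weight $\tfrac{1}{2}$ on the cross term $\langle u,v\rangle_{L^2(\Omega)}$, which is forced by the algebraic requirement that $\tfrac{d}{dt}\mathcal{E}$ closes as $-\mathcal{E}$; this in turn reflects the spectral fact that every eigenvalue of $\A$ has real part exactly $\tfrac{1}{2}$, so that the rate $e^{-t/2}$ is sharp.
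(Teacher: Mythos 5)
Your proposal is correct and follows essentially the same route as the paper: the functional $\mathcal{E}$ you introduce is exactly the paper's auxiliary energy $G(u,v) = \tfrac{1}{2}\|v\|_{L^2(\Omega)}^2 + \tfrac{1}{2}\|\nabla u\|_{L^2(\Omega)}^2 + \tfrac{1}{2}\|u\|_{L^2(\Omega)}^2 + \tfrac{1}{2}\langle u,v\rangle_{L^2(\Omega)}$, with the same two-sided equivalence constants $\tfrac14$ and $\tfrac34$, the same identity $\tfrac{\d}{\d t}G = -G$, the same conclusion $\|S_\A(t)U_0\|_H^2 \leq 3e^{-t}\|U_0\|_H^2$, and the same density argument to pass from $D(\A)$ to $H$.
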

% \textcolor{red}{NB: Enrique observed that the constant $>1$ is expected because the operator Neumann Laplacian is not self-adjoint (hence the eigenfunctions are not mutually orthogonal); at most, we could write $\le \min\{1, \sqrt{3}e^{-t/2}\}$ by exploiting the fact that $S_\mathscr{A}$ is a semigroup of contractions.}

The previous result states in an abstract form the following fact: If $u_0, v_0$ satisfy~\eqref{eq:initial data in energy space} and $(u(t),v(t))$ is the unique solution to~\eqref{eq:linear homogeneous}, then $\|u(t)\|_{H^1(\Omega)} \to 0$ and $\|\de_t u(t)\|_{L^2(\Omega)} \to 0$ exponentially fast. The main result in this paper is about the analogous result for the semilinear problem under investigation. Before going on with the discussion in the semilinear case, we provide some details for the proof of Proposition~\ref{prop:asymptotic linear homogeneous}, which follows the lines of~\cite[Lemma~9.5.1]{CazHar}.

\begin{proof}[Proof of Proposition~\ref{prop:asymptotic linear homogeneous}]
  We consider the following energy functional
  \[
    G(u,v) = \frac{1}{2}\|v\|_{L^2(\Omega)}^2 + \frac{1}{2}\|\nabla u\|_{L^2(\Omega)}^2 + \frac{1}{2}\|u\|_{L^2(\Omega)}^2 +  \frac{1}{2} \langle u,v \rangle_{L^2(\Omega)} \, .
  \]
  We observe that 
  \[
    \frac{1}{2} | \langle u,v \rangle_{L^2(\Omega)}|  \leq   \frac{1}{4} \|v\|_{L^2(\Omega)}^2 + \frac{1}{4} \|u\|_{L^2(\Omega)}^2 \, , 
  \] 
  hence
  \[ \label{eq:F bounds norms}  
   \frac{1}{4}  \|u\|_{H^1(\Omega)}^2 + \frac{1}{4}  \|v\|_{L^2(\Omega)}^2  \leq G(u,v) \leq \frac{3}{4}  \|u\|_{H^1(\Omega)}^2 + \frac{3}{4}  \|v\|_{L^2(\Omega)}^2  \, .
  \]

  Let us fix $U_0 = (u_0,v_0) \in D(\A)$ and let $U(t) = (u(t), v(t)) = S_\A(t)U_0$. By Proposition~\ref{prop:semigroup of contractions} we have that $U(t)\in C^1([0,+\infty);H) \cap C([0,+\infty); D(\A))$ solves~\eqref{eq:homogeneous ODE}, \ie, 
  \[  
\begin{dcases}
  \frac{\d u}{\d t}(t) - v(t) = 0 \, , & \text{for } t \in (0,+\infty) \, ,  \\
    \frac{\d v}{\d t}(t) - \Lap u(t) + v(t) + u(t)   =  0 \, , & \text{for } t \in (0,+\infty)  \, ,   \\
    u(0) = u_0 \, , \  v(0) = v_0 \, .
\end{dcases}
\]
Integrating by parts and exploiting the previous equation, we have that  
\[
  \begin{split}
    & \frac{\d}{\d t} \Big( G(u(t),\de_t u(t)) \Big) \\
    &  =   \frac{\d}{\d t} \Big( \frac{1}{2} \Big\|\frac{\d u}{\d t}(t)\Big\|_{L^2(\Omega)}^2 + \frac{1}{2}\|\nabla u(t)\|_{L^2(\Omega)}^2 + \frac{1}{2} \|u(t)\|_{L^2(\Omega)}^2  + \frac{1}{2}\Big\langle u(t),\frac{\d u}{\d t}(t) \Big\rangle_{L^2(\Omega)} \Big) \\
    & = \Big\langle \frac{\d^2 u}{\d t^2}(t), \frac{\d u}{\d t}(t) \Big\rangle_{L^2(\Omega)} + \Big\langle \frac{\d \nabla u}{\d t}(t), \nabla u(t) \Big\rangle_{L^2(\Omega)} + \Big\langle u(t), \frac{\d u}{\d t}(t)  \Big\rangle_{L^2(\Omega)} \\
    & \quad  + \frac{1}{2} \Big\| \frac{\d u}{\d t}(t) \Big\|_{L^2(\Omega)}^2 + \frac{1}{2} \Big\langle u(t), \frac{\d^2  u}{\d t^2}(t) \Big\rangle_{L^2(\Omega)} \\
    & = \Big\langle \Lap u(t) - \frac{\d u}{\d t}(t) - u(t) , \frac{\d u}{\d t}(t) \Big\rangle_{L^2(\Omega)} - \Big\langle \frac{\d u}{\d t}(t), \Lap u(t) \Big\rangle_{L^2(\Omega)} + \Big\langle u(t), \frac{\d u}{\d t}(t)  \Big\rangle_{L^2(\Omega)} \\
    & \quad + \frac{1}{2} \Big\| \frac{\d u}{\d t}(t) \Big\|_{L^2(\Omega)}^2   + \frac{1}{2}\Big\langle u(t),\Lap u(t) - \frac{\d u}{\d t}(t) - u(t) \Big\rangle_{L^2(\Omega)} \\
    & = - \frac{1}{2} \Big\| \frac{\d u}{\d t}(t) \Big\|_{L^2(\Omega)}^2 - \frac{1}{2} \|\nabla u(t)\|_{L^2(\Omega)}^2  - \frac{1}{2} \|u(t)\|_{L^2(\Omega)}^2 - \frac{1}{2} \Big\langle  u(t), \frac{\d u}{\d t}(t)  \Big\rangle_{L^2(\Omega)}\\
    & = - G(u(t),\de_t u(t))  \, .
  \end{split}
\]
This implies, together with~\eqref{eq:F bounds norms}, that 
\[
  \begin{split}
    \frac{1}{4} \|U(t)\|_{H}^2 & = \frac{1}{4}  \|u(t)\|_{H^1(\Omega)}^2 + \frac{1}{4}  \|\de_t u(t)\|_{L^2(\Omega)}^2  \leq G(u(t),\de_t u(t)) \\
    &  = G(u_0,v_0) e^{-t}  \leq \Big( \frac{3}{4} \|u_0\|_{H^1(\Omega)}^2 + \frac{3}{4} \|v_0\|_{L^2(\Omega)}^2 \Big) e^{-t} = \frac{3}{4} \|U_0\|_H^2 e^{-t} \, ,
  \end{split}
\]
\ie 
\[
  \|S_\A(t) U_0\|_{H}^2  \leq 3 \|U_0\|_H^2 e^{-t} \, ,
\]
By the density of $D(\A)$ in $H$, we conclude the proof. 
\end{proof}

We are now in a position to start the asymptotic analysis of solutions to the semilinear problem. The first step is to deduce a bound on $u(t)$ in $H^1(\Omega)$.
 
\begin{proposition}[$H^1$-bound] \label{prop:H1 bound}
  Assume that $u_0, v_0$ satisfy~\eqref{eq:initial data in energy space}. Let $(u,v)$ be the unique solution to~\eqref{eq:PDE} obtained in Theorem~\ref{prop:well-posedness approximation initial data in energy space}. Then 
    \[ 
      \sup_{t \geq 0} \|u(t)\|_{H^1(\Omega)} < +\infty \, .
    \]
\end{proposition}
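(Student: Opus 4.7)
\medskip

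\textbf{Plan.} The strategy is to read off the two bounds $\|\nabla u(t)\|_{L^2}$ and $\|u(t)\|_{L^2}$ directly from the two energy balances of Theorem~\ref{th:energy}: the first balance~\eqref{eq:approximate energy estimate II} controls the gradient (and, as a bonus, $\|\de_t u(t)\|_{L^2}$ and the full time integral $\int_0^t\|\de_t u(s)\|_{L^2}^2\,\d s$), while the second balance~\eqref{eq:estimate on Ik II} combined with the sign condition~\ref{it:p5} upgrades these to an $L^2$-control of $u(t)$ itself.

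\medskip

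\textbf{Step 1: gradient bound via~\eqref{eq:approximate energy estimate II}.} The explicit form~\eqref{def:Phik} (together with the standing assumption $u_*\sigma\ge 1$ implicit in~\ref{it:p2}) gives $\Phi \ge 0$ on $\R$, so each term in $E(u(t),\de_t u(t))$ is nonnegative. Dropping $\|\Phi(u(t))\|_{L^1(\Omega)}$ in~\eqref{eq:approximate energy estimate II} yields, uniformly in $t\ge 0$,
\[
\tfrac{1}{2}\|\nabla u(t)\|_{L^2(\Omega)}^2 + \tfrac{1}{2}\|\de_t u(t)\|_{L^2(\Omega)}^2 + \int_0^t \|\de_t u(s)\|_{L^2(\Omega)}^2\,\d s \le E(u_0,v_0).
\]

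\medskip

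\textbf{Step 2: $L^2$-bound via~\eqref{eq:estimate on Ik II}.} Property~\ref{it:p5} gives $\langle u(s),\Phi'(u(s))\rangle_{L^2(\Omega)}\ge 0$. Dropping this term and the $\|\nabla u(s)\|_{L^2(\Omega)}^2$ term in~\eqref{eq:estimate on Ik II}, and invoking Step~1 to estimate the right-hand side integral, we get
\[
J(u(t),\de_t u(t)) \le J(u_0,v_0) + \int_0^t\|\de_t u(s)\|_{L^2(\Omega)}^2\,\d s \le J(u_0,v_0) + E(u_0,v_0).
\]
To convert this upper bound into an $L^2$-bound on $u(t)$, I apply Young's inequality $\langle u,v\rangle_{L^2(\Omega)}\ge -\tfrac{1}{4}\|u\|_{L^2(\Omega)}^2-\|v\|_{L^2(\Omega)}^2$ to the cross term in $J$, obtaining
\[
J(u(t),\de_t u(t)) \ge \tfrac{1}{4}\|u(t)\|_{L^2(\Omega)}^2 - \|\de_t u(t)\|_{L^2(\Omega)}^2 \ge \tfrac{1}{4}\|u(t)\|_{L^2(\Omega)}^2 - 2E(u_0,v_0),
\]
where the last inequality is again Step~1. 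Combining the two displays,
\[
\|u(t)\|_{L^2(\Omega)}^2 \le 4\bigl(J(u_0,v_0) + 3E(u_0,v_0)\bigr).
\]

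\medskip

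\textbf{Step 3: conclusion.} Adding the $L^2$-bound from Step~2 to the gradient bound from Step~1 produces the uniform $H^1$-bound
\[
\sup_{t\ge 0}\|u(t)\|_{H^1(\Omega)}^2 \le 4J(u_0,v_0) + 14\,E(u_0,v_0) < +\infty,
\]
finiteness of the right-hand side being guaranteed by $u_0\in H^1(\Omega)\subset L^2(\Omega)$, $v_0\in L^2(\Omega)$, and $\Phi\in L^\infty(\R)$ by~\ref{it:p1}.

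\medskip

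There is no genuine obstacle here: all the ingredients are already in place (the two balances of Theorem~\ref{th:energy}, the sign condition~\ref{it:p5}, and the non-negativity of $\Phi$). The only choice to make is how to absorb the indefinite cross term $\langle u,\de_t u\rangle_{L^2(\Omega)}$ hidden in $J$, and Young's inequality with the right weights does this cleanly.
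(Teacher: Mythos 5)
Your proof is correct, and its first step (the gradient and velocity bounds from the energy balance \eqref{eq:approximate energy estimate II}) coincides with the paper's. Where you genuinely diverge is in the $L^2$-bound on $u(t)$. The paper proves the stronger statement that $J(u(t),\de_t u(t))$ converges to a limit $\ell$ as $t\to+\infty$ (via the monotonicity of $t\mapsto J(u(t),\de_t u(t))-\int_0^t\|\de_t u(s)\|_{L^2(\Omega)}^2\,\d s$ coming from \eqref{eq:estimate on Ik II} and \ref{it:p5}, plus a lower bound on $J$ by completing the square), and then identifies $\langle u,\de_t u\rangle_{L^2(\Omega)}$ as $\frac{\d}{\d t}\frac12\|u\|_{L^2(\Omega)}^2$, solves the resulting linear ODE by Duhamel, and applies l'H\^opital to conclude $\frac12\|u(t)\|_{L^2(\Omega)}^2\to\ell$; boundedness is then a corollary. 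You instead only bound $J(u(t),\de_t u(t))$ from above by $J(u_0,v_0)+E(u_0,v_0)$ (dropping the two nonnegative integrands in \eqref{eq:estimate on Ik II}) and from below by $\tfrac14\|u(t)\|_{L^2(\Omega)}^2-\|\de_t u(t)\|_{L^2(\Omega)}^2$ via Young, absorbing the cross term with the already-available velocity bound. This is shorter and more elementary, and it fully proves the proposition as stated; what it does not deliver is the convergence of $\|u(t)\|_{L^2(\Omega)}^2$ to $2\ell$, which the paper's heavier argument establishes here and then reuses in the proof of Theorem~\ref{prop:long-time limit} to pin down the modulus of the limit profile. So if one adopted your route, that convergence argument would still have to be supplied later; within the scope of this proposition, your version is a clean simplification.
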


\begin{proof} We split the proof of the estimate in two steps. 
  \begin{steps}
    \item (Estimating $\|\nabla u(t)\|_{L^2(\Omega)}$) By~\eqref{eq:approximate energy estimate II} and the definition of $E$ in~\eqref{def:E}, we obtain that 
    \[ \label{eq:bound on detu and nablau}
      \frac{1}{2} \| \de_t u(t) \|^2_{L^2(\Omega)}  + \frac{1}{2} \| \nabla u(t) \|^2_{L^2(\Omega)} \leq E(u(t), \de_t u(t))\leq E(u_0, v_0) \leq E(u_0,v_0)\, ,
      \]
      where we used that $\Phi \leq \Phi$ in the last inequality. In particular, $\sup_{t \geq 0} \|\nabla u(t)\|_{L^2(\Omega)} < +\infty$. 
      \item (Estimating $\|u(t)\|_{L^2(\Omega)}$) Estimating $\|u(t)\|_{L^2(\Omega)}$ requires several steps. The first observation is that the functional $J$ defined in~\eqref{def:J} satisfies that  
      \[ \label{eq:limit of J}
      \lim_{t \to +\infty}  J(u(t),\de_t u(t)) = \ell \, , \quad \text{for some } \ell \in \R \, .
      \]
      To prove~\eqref{eq:limit of J}, we start by observing that $t \mapsto J(u(t), \de_t u(t))$ is bounded from below, since~\eqref{eq:bound on detu and nablau} yields
      \[ \label{eq:J bounded from below}
        \begin{split}
          J(u(t), \de_t u(t)) & = \frac{1}{2} \|u(t)\|_{L^2(\Omega)}^2 + \langle u(t), \de_t u(t) \rangle_{L^2(\Omega)} \\
          & = \frac{1}{2}\| u(t) +  \de_t u(t) \|_{L^2(\Omega)}^2 - \frac{1}{2} \|\de_t u(t)\|_{L^2(\Omega)}^2 \\
          & \geq - \frac{1}{2} \|\de_t u(t)\|_{L^2(\Omega)}^2  \geq - E(u_0,v_0) \, .
        \end{split}
      \] 
      Let us show that $t \mapsto J(u(t), \de_t u(t)) - \int_0^t \|\de_t u(r)\|_{L^2(\Omega)}^2 \, \d r$ is nonincreasing. By~\eqref{eq:estimate on Ik II} and property \ref{it:p5} of $\Phi$, we have that, for every $s \leq t$,
      \[ 
      \begin{split}
          & J(u(s),\de_t u(s)) - \int_0^s \|\de_t u(r)\|_{L^2(\Omega)}^2 \, \d r  \\
          & \ = J(u(t),\de_t u(t))  - \int_0^t \|\de_t u(r)\|_{L^2(\Omega)}^2 \, \d r  \\
          & \quad +  \int_s^t  \Big( \|\nabla u(r)\|^2_{L^2(\Omega)}   + \langle u(r), \Phi'(u(r)) \rangle_{L^2(\Omega)}  \Big) \, \d r   \\
          & \ \geq J(u(t),\de_t u(t))  - \int_0^t \|\de_t u(r)\|_{L^2(\Omega)}^2 \, \d r \, .
      \end{split}
      \]
      The term $\int_0^t \|\de_t u(r)\|_{L^2(\Omega)}^2 \, \d r$ admits a limit as $t \to +\infty$. Indeed, equality~\eqref{eq:approximate energy estimate II} yields 
      \[
      \sup_{t \geq 0}\int_0^t \|\de_t u(s)\|^2_{L^2(\Omega)} \, \d s  \leq E(u_0, v_0) \leq E(u_0,v_0) \, ,
      \]
      hence $\de_t u \in L^2([0,+\infty); L^2(\Omega))$ and 
      \[ \label{eq:improper integral}
      \lim_{t \to +\infty}   \int_0^t \|\de_t u(s)\|^2_{L^2(\Omega)} \, \d s = \int_0^{
        +\infty} \|\de_t u(s)\|^2_{L^2(\Omega)} \, \d s \, .
      \]
  \end{steps}
   The monotonicity of $t \mapsto J(u(t), \de_t u(t))$, together with~\eqref{eq:J bounded from below} and \eqref{eq:improper integral}, gives

We exploit~\eqref{eq:limit of J} to show that 
\[ \label{eq:limit of L2 norm}
  \lim_{t \to +\infty}  \frac{1}{2} \|u(t)\|_{L^2(\Omega)}^2 =  \ell \, .
\]
By definition of~$J$ we have that 
  \[
    \begin{split}
      J(u(t), \de_t u(t)) & = \frac{1}{2} \|u(t)\|_{L^2(\Omega)}^2 + \langle u(t), \de_t u(t) \rangle_{L^2(\Omega)} \\
      & = \frac{1}{2} \|u(t)\|_{L^2(\Omega)}^2 + \frac{\d}{\d t} \Big( \frac{1}{2} \|u(t)\|_{L^2(\Omega)}^2 \Big) \, ,
    \end{split}
  \]
  \ie, the function $t \mapsto \frac{1}{2} \|u(t)\|_{L^2(\Omega)}^2$ solves the Cauchy problem
  \[
    \begin{dcases}
      \frac{\d}{\d t} \Big( \frac{1}{2} \|u(t)\|_{L^2(\Omega)}^2  \Big) + \frac{1}{2} \|u(t)\|_{L^2(\Omega)}^2 = J(u(t), \de_t u(t)) \, , &  t \in (0,+\infty) \, ,  \\
      \frac{1}{2} \|u(0)\|_{L^2(\Omega)}^2 = \frac{1}{2} \| u_0 \|_{L^2(\Omega)}^2 \, ,
    \end{dcases}
  \]
  hence
  \[
    \frac{1}{2} \|u(t)\|_{L^2(\Omega)}^2 =  \frac{1}{2} \| u_0 \|_{L^2(\Omega)}^2 e^{-t} + \int_0^t e^{s-t} J(u(s), \de_t u(s)) \, \d s \, .
  \]
  We observe that, by~\eqref{eq:limit of J} and l'H\^{o}pital's rule 
  \[
    \begin{split}
      \ell & = \lim_{t \to +\infty} J(u(t), \de_t u(t)) =  \lim_{t \to +\infty}  \frac{   e^{t} J(u(t), \de_t u(t))}{e^t} \\
      & =  \lim_{t \to +\infty}  \frac{ \int_0^t e^{s} J(u(s), \de_t u(s)) \, \d s}{e^t} = \lim_{t \to +\infty}  \int_0^t e^{s-t} J(u(s), \de_t u(s)) \, \d s \, ,
    \end{split}
  \]
  thus~\eqref{eq:limit of L2 norm} follows. In particular, $\sup_{t \geq 0}\|u(t)\|_{L^2(\Omega)} < +\infty$. This concludes the proof. 
  
\end{proof}

Next, we study the accumulation points of $u(t)$  as $t \to +\infty$. 

\begin{proposition}[$\omega$-limit set]  \label{prop:omega-limit set}
  Assume that $u_0, v_0$ satisfy~\eqref{eq:initial data in energy space}. Let $(u,v)$ be the unique solution to~\eqref{eq:PDE} obtained in Theorem~\ref{prop:well-posedness approximation initial data in energy space}. For every sequence $(t_n)_{n \in \N}$ with $t_n \nearrow+\infty$ there exists a subsequence $(t_{n_m})_{m\in \N}$ and $u_\infty \in H^1(\Omega), v_\infty  \in L^2(\Omega)$ such that $u(t_{n_m}) \to u_\infty$  strongly in $H^1(\Omega)$ and $\de_t u(t_{n_m}) \to v_\infty$ strongly in $L^2(\Omega)$ as $m \to +\infty$. Moreover, $v_\infty = 0$ and $u_\infty$ is a weak solution to the stationary problem
  \[  \label{eq:stationary PDE}
    \begin{cases}
         - \Lap u_\infty + \Phi'(u_\infty) = 0 \, , & x \in  \Omega   \, , \\
        \nabla u_\infty \cdot \nu = 0  \, , & x \in    \de \Omega \,  .
    \end{cases}
\]
\end{proposition}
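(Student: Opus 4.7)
The plan is to combine a LaSalle-type argument using shifted trajectories with the two energy balances of Theorem~\ref{th:energy}. The delicate point is that, while weak compactness is immediate, upgrading to strong convergence at the specific instant $t = t_{n_m}$ requires pinning down the limit of the energy $E(u(t),\partial_t u(t))$ as $t\to+\infty$.

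I would introduce the shifted solution $w_n(s,x) := u(t_n+s,x)$ for $s\in[-1,1]$, which satisfies the same PDE on its domain. By Proposition~\ref{prop:H1 bound}, $(w_n)$ is bounded in $L^\infty(-1,1;H^1(\Omega))$; by~\eqref{eq:approximate energy estimate II}, $(\partial_s w_n)$ is bounded in $L^\infty(-1,1;L^2(\Omega))$; and inserting these bounds into the PDE, $(\partial_{ss} w_n)$ is bounded in $L^\infty(-1,1;H^{-1}(\Omega))$. Since $H^1(\Omega)\hookrightarrow L^2(\Omega)$ compactly, Arzelà--Ascoli yields, along a subsequence, $w_{n_m} \to u_\infty$ in $C([-1,1];L^2(\Omega))$ and $\partial_s w_{n_m} \to v_\infty^*$ in $C([-1,1];H^{-1}(\Omega))$, with weak-$*$ convergence in the corresponding $L^\infty$ spaces. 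The integrability $\int_0^{+\infty}\|\partial_t u\|^2_{L^2(\Omega)}\,\d t<+\infty$ (from~\eqref{eq:approximate energy estimate II}) forces $\partial_s w_{n_m}\to 0$ in $L^2((-1,1);L^2(\Omega))$, so $v_\infty^* = 0$ and $u_\infty$ is independent of $s$. Evaluating at $s=0$ gives $u(t_{n_m})\to u_\infty$ strongly in $L^2(\Omega)$, $\partial_t u(t_{n_m})\to 0$ in $H^{-1}(\Omega)$, and hence, by $L^2$-boundedness, $\partial_t u(t_{n_m})\rightharpoonup 0$ weakly in $L^2(\Omega)$. Passing to the limit in the weak formulation of the PDE tested against $\varphi(s,x)=\eta(s)\psi(x)$ (with $\eta\in C^\infty_c((-1,1))$, $\int\eta\ne 0$) and using the Lipschitz continuity of $\Phi'$ shows that $u_\infty$ weakly solves~\eqref{eq:stationary PDE}. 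Testing this identity with $u_\infty$ and invoking property~\ref{it:p5} forces $\|\nabla u_\infty\|^2_{L^2(\Omega)} = \langle u_\infty,\Phi'(u_\infty)\rangle_{L^2(\Omega)}=0$, so $u_\infty$ is a constant belonging to $\{\Phi'=0\}$.

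The main obstacle is to pin down $E_\infty := \lim_{t \to +\infty} E(u(t),\partial_t u(t))$, whose existence follows from~\eqref{eq:approximate energy estimate II}. The key step is to test the PDE for $w_n$ against $w_n$ itself on $(-\delta,\delta)$ and integrate by parts using the Neumann boundary condition. This produces an identity expressing $\int_{-\delta}^{\delta}\|\nabla w_n\|^2_{L^2(\Omega)}\,\d s$ as a sum of: boundary terms $\langle\partial_s w_n(\pm\delta),w_n(\pm\delta)\rangle_{L^2(\Omega)}$ and $\tfrac{1}{2}\|w_n(\pm\delta)\|^2_{L^2(\Omega)}$, whose limits vanish (the former because $\partial_t u(t_n\pm\delta)\rightharpoonup 0$ weakly in $L^2(\Omega)$ and $u(t_n\pm\delta)\to u_\infty$ strongly in $L^2(\Omega)$; the latter because the contributions at $\pm\delta$ cancel); the kinetic term $\int_{-\delta}^{\delta}\|\partial_s w_n\|^2_{L^2(\Omega)}\,\d s\to 0$; and the nonlinear term $\int_{-\delta}^{\delta}\langle w_n,\Phi'(w_n)\rangle_{L^2(\Omega)}\,\d s \to 2\delta\,\langle u_\infty,\Phi'(u_\infty)\rangle_{L^2(\Omega)} = 0$. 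Therefore $\int_{-\delta}^{\delta}\|\nabla w_n\|^2_{L^2(\Omega)}\,\d s\to 0$. On the other hand, $E(w_n(s),\partial_s w_n(s))\to E_\infty$ uniformly in $s\in[-\delta,\delta]$, and $\int_{-\delta}^{\delta}\|\Phi(w_n)\|_{L^1(\Omega)}\,\d s\to 2\delta\|\Phi(u_\infty)\|_{L^1(\Omega)}$ by dominated convergence; combining these facts yields $E_\infty = \|\Phi(u_\infty)\|_{L^1(\Omega)}$.

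Evaluating the energy balance at $s=0$ now gives $\tfrac{1}{2}\|\partial_t u(t_{n_m})\|^2_{L^2(\Omega)} + \tfrac{1}{2}\|\nabla u(t_{n_m})\|^2_{L^2(\Omega)} + \|\Phi(u(t_{n_m}))\|_{L^1(\Omega)} \to \|\Phi(u_\infty)\|_{L^1(\Omega)}$, and dominated convergence disposes of the $\Phi$-term. Hence both $\|\partial_t u(t_{n_m})\|_{L^2(\Omega)}\to 0$ and $\|\nabla u(t_{n_m})\|_{L^2(\Omega)}\to 0 = \|\nabla u_\infty\|_{L^2(\Omega)}$; combined with the strong $L^2$-convergence $u(t_{n_m})\to u_\infty$ this gives the strong $H^1(\Omega)$-convergence of $u(t_{n_m})$ to $u_\infty$ and identifies $v_\infty = 0$.
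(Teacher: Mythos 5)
Your argument is correct, but it follows a genuinely different route from the paper's. The paper obtains strong precompactness of the full trajectory $(u(t),\de_t u(t))$ in $H^1(\Omega)\x L^2(\Omega)$ directly from Duhamel's formula: the linear part $S_\A(t)U_0$ decays exponentially (Proposition~\ref{prop:asymptotic linear homogeneous}), and the convolution term is relatively compact because $F$ maps bounded subsets of $H$ into relatively compact sets (via $H^1(\Omega)\subset\subset L^2(\Omega)$); it then identifies the limit as a rest point by LaSalle's invariance principle, restarting the flow from $U_\infty$ (Remark~\ref{rmk:dynamical system operator}) and observing that the energy is constant on the limit trajectory, so the dissipation vanishes. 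You instead use Aubin--Lions compactness on the time-shifted windows $w_n(s)=u(t_n+s)$, which only yields strong $L^2$-convergence of $u$ and weak convergence of the remaining quantities; you identify the limit equation by passing to the limit in the weak formulation over the window, and then you recover the strong $H^1\x L^2$ convergence at $s=0$ through an energy-identification step, proving $E_\infty=\|\Phi(u_\infty)\|_{L^1(\Omega)}$ by testing the equation with $w_n$ over $(-\delta,\delta)$. Both approaches are sound. The paper's buys strong compactness of the orbit independently of any knowledge of the $\omega$-limit set, reusing the semigroup decay that is needed anyway in Section~\ref{sec:rate-approx}; yours is more elementary and self-contained (no exponential decay of $S_\A$, no compactness lemma for $F$), at the price of the extra identification of $E_\infty$ and of exploiting that the stationary states have vanishing gradient. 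Two minor points: the "test against $w_n$" computation is not directly licensed for weak solutions and should be justified either by the density argument of Section~\ref{ssec:sol-enespace} or, more simply, by invoking the already-established balance~\eqref{eq:estimate on Ik II} between $t_n-\delta$ and $t_n+\delta$, which is precisely that identity; and $H^{-1}(\Omega)$ should be read as the dual of $H^1(\Omega)$ (not of $H^1_0(\Omega)$) for the Neumann Laplacian bound on $\de_{ss}w_n$.
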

\begin{proof} We divide the proof in two steps.
\begin{steps}
  \item (Compactness) The proof follows the lines of~\cite[Lemma~9.5.2]{CazHar}. We provide the details for the sake of completeness. 

  By Theorem~\ref{prop:well-posedness approximation initial data in energy space}, we have that $u = (u,v) \in C([0,+\infty); H)$ solves
  \[ \label{eq:Uk solves}
    u(t) = S_\A(t) U_0 - \int _0^t S_\A(t-s) F(u(s)) \, \d s \, , \quad t \in [0,+\infty)\, ,  
  \]
  where $U_0 = (u_0,v_0) \in H$, $F$ is defined in Remark~\ref{rmk:maximal monotone}, and $\{S_\A(t)\}_{t \geq 0}$ is the continuous semigroup of contractions associated to $\A$ (see Proposition~\ref{prop:semigroup of contractions}). By Proposition~\ref{prop:asymptotic linear homogeneous}, we have that $\|S_\A(t) U_0 \|_H \to 0$, hence 
  \[ \label{eq:compact 1}
    \{S_\A(t) U_0\}_{t \geq 0} \subset K_1 \, , \quad \text{for some compact set } K_1 \subset H \, .
  \]
  
  To conclude, it is enough to show that the family $\{ \int _0^t S_\A(t-s) F(u(s)) \, \d s\}_{t \geq 0}$ is contained in a compact set of $H^1(\Omega)$. Let us set 
  \[
  V(t) := \int_0^t  S_\A(t-s) F(u(s)) \, \d s = \int_0^t  S_\A(s) F(u(t-s)) \, \d s \, . 
  \]
  Let us fix $\e > 0$. By Proposition~\ref{prop:H1 bound}, we have, in particular, that $\sup_{t \geq 0} \|u(t)\|_{L^2(\Omega)} < + \infty$. Recalling the boundedness of $\Phi'$, we deduce that 
  \[
    M:= \sup_{t \geq 0}\|F(u(t))\|_H \leq \sup_{t \geq 0} ( \|u(t)\|_{L^2(\Omega)} + \|\Phi'\|_\infty |\Omega|^\frac{1}{2}) < + \infty \, .
    \] 
  By the previous bound and by Proposition~\ref{prop:asymptotic linear homogeneous}, there exists $T > 0$ such that 
  \[ \label{eq:tail of integral}
  \int_T^{+\infty}  \| S_\A(s) \|_{\L(H)} \| F(u(t-s)) \|_{H}\, \d s   \leq  \int_T^{+\infty}  c \, e^{-s/2}  M \, \d s  < \e \, .
  \]

  The curve $t \mapsto V(t) \in H$ is continuous, hence it maps the interval $[0,T]$ into a compact set. This implies that the family $\{V(t)\}_{0\leq t \leq T}$ is compact and, as such,
  \[ \label{eq:V is totally bounded}
    \{V(t)\}_{0\leq t \leq T} \text{ is totally bounded.}
  \]
  
  Let us study the family $\{V(t)\}_{t \geq T}$. Thanks to~\eqref{eq:tail of integral}, we approximate $V(t)$ with 
  \[
  \tilde V(t) :=  \int_0^T   S_\A(s) F(u(t-s)) \, \d s \, .
  \]
  Indeed, by~\eqref{eq:tail of integral}, for $t \geq T$ we have that 
  \[ \label{eq:V and tilde V}
    \begin{split}
      &  \| V(t) - \tilde V(t)  \|_H   \leq \int_T^t \| S_\A(s) \|_{\L(H)} \| F(u(t-s)) \|_{H} \, \d s < \e/2 \, .
    \end{split}
  \] 
  Let us show that the family $\{\tilde V(t)\}_{t \geq T}$ is totally bounded. We start by observing that the nonlinearity $F \colon H \to H$ maps sets of the form $B \x L^2(\Omega)$, with $B \subset H^1(\Omega)$ bounded, into relatively compact sets, thanks to the compact embedding $H^1(\Omega) \subset \subset L^2(\Omega)$ and the specific expression of $F$ in~\eqref{eq:Fk}. Hence, there exists a compact set $K \subset H$ such that $\{F(u(t))\}_{t \geq 0} \subset K$. It follows that the family $\{S_\A(s)F(u(t-s))\}_{0 \leq s \leq T}$ is contained in the set $\bigcup_{0\leq s \leq T} S_\A(s)K$, which is compact, since it is given by the image of the compact set $[0,T] \x K$ through the jointly continuous map $(t,U)\in [0,+\infty) \x H \mapsto S_\A(t)U \in H$. The operator $I \colon C([0,T];H) \to H$ given by $I(U) = \int_0^T U(s) \, \d s$ is continuous, thus it maps the set $\bigcup_{0\leq s \leq T} S_\A(s)K$ into a compact set $K_2 \subset H$. We conclude that  $\{\tilde V(t)\}_{t \geq T} \subset K_2$, hence  $\{\tilde V(t)\}_{t \geq T}$ is totally bounded. In particular,  $\{\tilde V(t)\}_{t \geq T}$ can be covered by finitely-many balls with radii $\e/2$. By~\eqref{eq:V and tilde V}, we infer that $\{V(t)\}_{t \geq T}$ can be covered by finitely-many balls with radii $\e$. Together with~\eqref{eq:compact 1} and~\eqref{eq:V is totally bounded}, this shows that the family $\{u(t)\}_{t \geq 0}$ is totally bounded. In particular, we deduce that for every sequence $t_n \nearrow +\infty$ we can extract a subsequence (that we do not relabel) $t_{n}$ such that 
  \[ \label{eq:convergence to Uinfty}
    U(t_{n}) = (u(t_{n}), \de_t u(t_{n}))\to U_\infty = (u_\infty, v_\infty) \quad \text{in }H \text{ as } n \to +\infty \, .
  \]
  This concludes the proof of compactness.   

  \item (Stationary problem) We use $U_\infty = (u_\infty, v_\infty)$ as initial datum for the PDE, \ie, we consider the unique solution $w(t)$ to 
  \[   
    \begin{cases}
        \de^2_{tt} w - \Lap w + \de_t w + \Phi'(w) = 0 \, , & \text{in } (0,+\infty) \x \Omega   \, , \\
        \nabla w \cdot \nu = 0  \, , & \text{on }  (0,+\infty) \x \de \Omega \, , \\
        w(0,\cdot) = u_\infty \, , \  \de_t w(0,\cdot) = v_\infty \, , & \text{in } \Omega   \, .
    \end{cases}
\]  
  More rigorously, $W(t) = (w(t),\de_t w(t)) \in C([0,+\infty); H)$ is the unique curve satisfying 
  \[
    W(t) = S_\A(t)U_\infty - \int_0^t S_\A(s)F(W(t-s)) \, \d s \, , 
  \]
  provided by Theorem~\ref{prop:well-posedness approximation initial data in energy space}. 

  By Remark~\ref{rmk:dynamical system operator}, there exists a family of operators $\{T(t)\}_{t \geq 0}$ satisfying Properties~\ref{it:1}--\ref{it:2} and such that $T(t)U_0$ is the unique solution to the evolution problem with initial datum $U_0$. Adopting this operator, we have that the solution $u(t)$ considered in the previous step and satisfying~\eqref{eq:Uk solves} is given by $u(t) = T(t) (U_0)$. Analogously, $W(t) = T(t) (U_\infty)$.
  
  In the previous step we have shown that $T(t_n)(U_0) = u(t_n) \to U_\infty$ in $H$. By continuity of $T(t)$, we get that
  \[ \label{eq:convergence t plus tn}
    u(t+t_n) = T(t+t_n)(U_0) = T(t)\circ T(t_n)(U_0) \to T(t)(U_\infty) = W(t) \, , \quad \text{ in } H \text{ as } n \to +\infty \, . 
  \]
  Let us now consider the energy functional $E$ defined in~\eqref{def:E}. By Theorem \ref{th:energy}, we have that for every $s \leq t$
  \[
    E(u(t),\de_t u(t)) + \int_s^t \|\de_t u(r)\|_{L^2(\Omega)}^2 \, \d r = E(u(s),\de_t u(s))  \, ,
  \]
  and, in particular, that $t \mapsto E(u(t),\de_t u(t))$ is nonincreasing. Being bounded from below, we deduce the existence of $e_\infty \geq 0$ such that  $\lim_{t \to +\infty}E(u(t),\de_t u(t)) = e_\infty$. The functional $E$ is continuous with respect to the convergence $u(t_{n}) = (u(t_{n}), \de_t u(t_{n}))\to U_\infty = (u_\infty, v_\infty)$ in $H$ as $n \to +\infty$, hence 
  \[
    e_\infty  = \lim_{n \to +\infty}   E(u(t_n),\de_t u(t_n)) =  E(u_\infty,v_\infty)\, .
  \]
  Analogously, by~\eqref{eq:convergence t plus tn} we have that 
  \[
   e_\infty = \lim_{n \to +\infty}   E(u(t+t_n),\de_t u(t+t_n)) = E(w(t),\de_t w(t)) \, .
  \]
  The energy balance obtained in Theorem \ref{th:energy} yields that 
  \[
  e_\infty +\int_0^t \|\de_t w(s)\|_{L^2(\Omega)}^2 \, \d s =  E(w(t),\de_t w(t))  + \int_0^t \|\de_t w(s)\|_{L^2(\Omega)}^2 \, \d s = E(u_\infty,v_\infty) = e_\infty \, ,
  \]
  for every $t \in [0,+\infty)$, therefore 
  \[
  \int_0^t  \|\de_t w(s)\|_{L^2(\Omega)}^2 \, \d s = 0 \, ,
  \]
  for every $t \in [0,+\infty)$. This implies that $\de_t w(t) = 0$ for every $t \in (0,+\infty)$, \ie, $w(t)$ is constant in time. In particular, $v_\infty = 0$ and $u_\infty$ is a weak solution to the stationary problem~\eqref{eq:stationary PDE}.
\end{steps}
\end{proof}

In the next proposition, we characterize the solutions to the stationary problem. 
\begin{proposition}[Solutions to the stationary problem] \label{prop:stationary problem}
  Let $u_\infty \in H^1(\Omega)$ be a weak solution to the stationary problem \eqref{eq:stationary PDE}. 
%  \[ 
%  \begin{cases}
%       - \Lap u_\infty + \Phi'(u_\infty) = 0 \, , & x \in   \Omega   \, , \\
%      \nabla u_\infty \cdot \nu = 0  \, , & x \in    \de \Omega \,  .
%  \end{cases}
%\] 
Then $u_\infty$ is constant a.e.\ in $\Omega$ and $u_\infty \in \{\Phi' = 0\}$.
\end{proposition}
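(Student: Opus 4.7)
The plan is to use the weak formulation of the stationary problem tested against $u_\infty$ itself, exploiting the sign property \ref{it:p5} of the potential, namely $u\Phi'(u) \geq 0$.

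Since $u_\infty \in H^1(\Omega)$ is admissible as a test function, the weak formulation of \eqref{eq:stationary PDE} yields
\[
\int_\Omega |\nabla u_\infty|^2 \, \d x + \int_\Omega u_\infty \, \Phi'(u_\infty) \, \d x = 0 .
\]
By property \ref{it:p5}, the integrand $u_\infty \Phi'(u_\infty)$ is nonnegative a.e.\ in $\Omega$, while $|\nabla u_\infty|^2 \geq 0$ trivially. Hence both integrals must vanish separately: $\nabla u_\infty = 0$ a.e.\ in $\Omega$ and $u_\infty \Phi'(u_\infty) = 0$ a.e.\ in $\Omega$.

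Next, I would use that $\Omega$ is connected (as assumed in Section~\ref{sec:main}) together with $\nabla u_\infty = 0$ a.e.\ to conclude that $u_\infty \equiv c$ is constant a.e.\ in $\Omega$ for some $c \in \R$. Substituting into the pointwise identity $u_\infty \Phi'(u_\infty) = 0$ yields $c \, \Phi'(c) = 0$. Finally, to upgrade this to $c \in \{\Phi' = 0\}$, I would distinguish two cases: if $c \neq 0$, then necessarily $\Phi'(c) = 0$, so $c \in \{\Phi' = 0\}$; if $c = 0$, then we note directly from the definition \eqref{def:Phik} that $\Phi'(0) = 0$ (since $\Phi(u) = u^2$ in a neighborhood of the origin), so again $c \in \{\Phi' = 0\}$.

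I do not expect any real obstacle here: the argument is a clean application of the energy identity obtained by testing the equation with the solution, the sign condition \ref{it:p5}, and the connectedness of $\Omega$. The only point worth being careful about is ensuring that $c=0$ is not left out, which is handled by the explicit form of $\Phi$ near the origin.
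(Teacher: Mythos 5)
Your argument is correct and coincides with the paper's own proof: testing the weak formulation with $u_\infty$, invoking property \ref{it:p5} to force both nonnegative terms to vanish, using connectedness of $\Omega$ to get a constant, and noting that $0 \in \{\Phi' = 0\}$ to handle the case $c = 0$. No gaps.
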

\begin{proof}
  We test the equation with $u_\infty$. We get that 
  \[
    \int_{\Omega} |\nabla u_\infty|^2 \, \d x + \int_{\Omega} u_\infty \Phi'(u_\infty) \, \d x = 0 \, .
  \]
  By property \ref{it:p5} of $\Phi$, we have that $u_\infty \Phi'(u_\infty) \geq 0$, hence both terms must equal zero. By the connectedness of $\Omega$ we deduce that $u_\infty$ is constant a.e.\ in $\Omega$. From the condition $u_\infty \Phi'(u_\infty)$ we infer that either $u_\infty = 0$ or $u_\infty \in \{\Phi' = 0\}$. Note that $0 \in \{\Phi' = 0\}$. This concludes the proof. 
\end{proof}

The previous result does not guarantee uniqueness of the accumulation points of $u(t)$ for large $t$, since there are infinitely many values that annihilate $\Phi'$. To obtain this result and therefore existence of the limit for large $t$ (as claimed in Theorem \ref{prop:long-time limit}),  we exploit the energy balance~\eqref{eq:estimate on Ik II}.

\begin{proof}[Proof of Theorem \ref{prop:long-time limit}]
  The fact that 
  \[ \label{eq:detu to zero}
  \de_t u(t) \to 0 \quad  \text{ strongly in } L^2(\Omega) \text{ for } t \to +\infty
  \]
  follows from the equality $v_\infty = 0$ obtained in Proposition~\ref{prop:omega-limit set}. 

  Let us now fix two sequences $t_n \nearrow +\infty$ and $t_m \nearrow +\infty$ and $u_\infty, u_\infty' \in H^1(\Omega)$ such that $u(t_n) \to u_\infty$ strongly in $H^1(\Omega)$ as $n \to +\infty$ and $u(t_m) \to u_\infty'$ strongly in $H^1(\Omega)$ as $m \to +\infty$. By Proposition~\ref{prop:stationary problem}, $u_\infty$ and $u_\infty'$ are a.e.\ constant in $\Omega$ and satisfy $u_\infty, u_\infty' \in \{\Phi' = 0\}$.
  
  On the one hand, as a consequence of the energy balance in  Theorem~\ref{prop:well-posedness approximation initial data in energy space}, we have that (see the proof of Proposition~\ref{prop:H1 bound} for the details) the following limit exists:  
  \[
  \lim_{t \to +\infty}  J(u(t), \de_t u(t)) = \ell \, . 
  \]
  By definition~\eqref{def:J}, $\frac{1}{2} \|u(t)\|_{L^2(\Omega)}^2 + \langle u(t), \de_t u(t) \rangle_{L^2(\Omega)} \to \ell$ as $t \to +\infty$. By the convergence~\eqref{eq:detu to zero} and the bound obtained in Proposition~\ref{prop:H1 bound}, we get that $\langle u(t), \de_t u(t) \rangle_{L^2(\Omega)} \to 0$ as $t \to +\infty$. It follows that $\|u(t)\|_{L^2(\Omega)} \to \sqrt{2\ell}$. 

  On the other hand, the convergences $u(t_n) \to u_\infty$ and $u(t_m) \to u_\infty'$ yield $\|u(t_n)\|_{L^2(\Omega)} \to \|u_\infty\|_{L^2(\Omega)}$ and $\|u(t_m)\|_{L^2(\Omega)} \to \|u_\infty'\|_{L^2(\Omega)}$, therefore 
  \[ \label{eq:modulus independent}
  |u_\infty| = \sqrt{2 \ell}\, |\Omega|^{-\frac{1}{2}}   =  |u_\infty'| \, ,
  \]
  \ie, the modulus of the limit does not depend on the sequence of times. 
  
  If $\ell = 0$, then $|u_\infty| = |u_\infty'| = 0$ independently of the subsequence. Thus $\|u(t)\|_{H^1(\Omega)} \to 0$ as $t \to +\infty$.

  If $\ell > 0$, let us show that $u_\infty = u_\infty'$ arguing by contradiction. Let us assume that $u_\infty \neq u_\infty'$, \ie, $u_\infty' = - u_\infty$. Let $\delta > 0$ be such that the balls $B_\delta(u_\infty), B_\delta(u_\infty') \subset H^1(\Omega)$ of radius $\delta$ centered in $u_\infty$ and $u_\infty'$, respectively, are disjoint, \ie, $B_\delta(u_\infty) \cap B_\delta(u_\infty') = \emptyset$. By suitably extracting subsequences from $(t_n)_n$ and $(t_m)_m$, we construct a new sequence $(s_j)_j$, $s_j \nearrow +\infty$, such that $u(s_j) \in B_\delta(u_\infty)$  for  $j$  even and $u(s_j) \in B_\delta(u_\infty')$ for $j$ odd. Let us consider two consecutive times $s_j$ and $s_{j+1}$, assuming, without loss of generality, that $j$ is even. By the connectedness of the image of the continuous curve $u \colon [s_j, s_{j+1}] \to H^1(\Omega)$, there exists a time labelled $s_{j+1/2} \in (s_j, s_{j+1})$ such that $u(s_{j+1/2}) \notin  B_\delta(u_\infty) \cup B_\delta(u_\infty')$, \ie, 
  \[ \label{eq:distant from limits}
  \|u(s_{j+1/2}) - u_\infty\|_{H^1(\Omega)}  \geq \delta \, , \quad  \|u(s_{j+1/2}) - u_\infty'\|_{H^1(\Omega)}  \geq \delta \, .
  \]
  By Proposition~\ref{prop:omega-limit set} we find $u_\infty''$ and we extract a subsequence (not relabeled) from $s_{j+1/2}$ such that $u(s_{j+1/2}) \to u_\infty''$. By Proposition~\ref{prop:stationary problem}, $u_\infty''$ is a.e.\ constant in $\Omega$. Passing to the limit in~\eqref{eq:distant from limits}, we obtain that 
  \[
    |u_\infty'' - u_\infty|   \geq \delta |\Omega|^{-\frac{1}{2}}\, , \quad  |u_\infty'' + u_\infty| = |u_\infty'' - u_\infty'|   \geq \delta |\Omega|^{-\frac{1}{2}} \, ,
  \]
  and, in particular, $|u_\infty''| \neq |u_\infty|$. However, in~\eqref{eq:modulus independent} we have shown that the modulus of the limit is independent of the sequence of times. This is a contraction, thus necessarily $u_\infty = u_\infty'$.
\end{proof}

\section{Proof of the exponential decay rate}
\label{sec:rate-approx}

We now turn to the proof of the convergence rate in Theorem \ref{prop:decay-rate}. We stress that we work in dimension $d = 1$ and assume that $\Omega$ is the interval $\Omega = (0,L) \subset \R$.

Inspired by~\cite{CazHar, HarZua88}, we exploit an energy balance for the auxiliary functionals $G_\lambda \colon H^1(\Omega) \x L^2(\Omega) \to [0,+\infty)$ defined~by 
\[ \label{def:Gklambda}
    G_\lambda(u,v) = \frac{1}{2}\|v\|_{L^2(\Omega)}^2 + \frac{1}{2}\|\nabla u\|_{L^2(\Omega)}^2  + \int_{\Omega}( \Phi(u) - \Phi(u_\infty)) \, \d x + \lambda \langle u - u_\infty, v \rangle_{L^2(\Omega)} \, ,
\]
where $\lambda \in (0,1)$ will be suitably chosen later and $u_\infty \in \{\Phi' = 0\}$. This is a perturbation (depending on $\lambda$)  of $E(u,v) - E(u_\infty,0)$.

\begin{proof}[Proof of Theorem \ref{prop:decay-rate}] We split the proof in several steps.
\begin{steps} 
  \item (Energy estimate for $G_\lambda$ with regular initial data) In this step, we assume that the initial data $u_0, v_0$ are more regular and satisfy~\eqref{eq:regularity initial data}. We will relax this assumption in the next step. By Proposition~\ref{prop:well-posedness approximation with regular data}, this guarantees higher regularity for the solution, \ie, $(u,v) \in C^1([0,+\infty);H) \cap C([0,+\infty); D(\A))$, which justifies the following computations: 
\[
  \begin{split}
    G_\lambda(u(t), \de_t u(t)) & = \frac{1}{2}\|\de_t u(t)\|_{L^2(\Omega)}^2 + \frac{1}{2}\|\nabla u(t)\|_{L^2(\Omega)}^2  + \int_{\Omega} ( \Phi(u(t)) - \Phi(u_\infty) ) \, \d x \\
    & \quad  + \lambda \langle u(t) - u_\infty, \de_t u(t) \rangle_{L^2(\Omega)}  \, .
  \end{split}
\]
Recalling that $u_\infty$ is a.e.\ constant, exploiting the PDE~\eqref{eq:PDE}, and integrating by parts, we obtain that 
 \[ \label{eq:derivative of Gklambda}
  \begin{split}
   &  \frac{\d}{\d t} \Big(G_\lambda(u(t), \de_t u(t)) \Big)  = \langle \de_t u(t), \de_{tt} u(t) \rangle_{L^2(\Omega)}  +  \langle \nabla u(t), \de_t  \nabla u(t) \rangle_{L^2(\Omega)} \\
   & \quad + \langle \Phi'(u(t)), \de_t u(t) \rangle_{L^2(\Omega)}  + \lambda \| \de_t u(t) \|_{L^2(\Omega)}^2 + \lambda \langle u(t) - u_\infty, \de_{tt} u(t) \rangle_{L^2(\Omega)}  \\
   & = \langle \de_t u(t), \Lap u(t) - \de_t u(t) - \Phi'(u(t))\rangle_{L^2(\Omega)} - \langle \Lap u(t), \de_t u(t) \rangle_{L^2(\Omega)}+ \lambda \| \de_t u(t) \|_{L^2(\Omega)}^2   \\
   & \quad + \langle \Phi'(u(t)), \de_t u(t) \rangle_{L^2(\Omega)}  + \lambda \langle u(t) - u_\infty, \Lap u(t) - \de_t u(t) - \Phi'(u(t)) \rangle_{L^2(\Omega)}  \\
   & = -(1-\lambda) \|\de_t u(t)\|_{L^2(\Omega)}^2  - \lambda \|\nabla u(t)\|_{L^2(\Omega)}^2 -  \lambda \langle u(t) - u_\infty, \Phi'(u(t)) \rangle_{L^2(\Omega)} \\
   & \quad  - \lambda \langle u(t) - u_\infty, \de_t u(t) \rangle_{L^2(\Omega)} \,, \qquad \text{ for }  t >0.
  \end{split}
 \]
 
 To reconstruct $G_\lambda$ on the right-hand side of the previous expression and thus apply Gr\"onwall's inequality, we need to compare $\int_{\Omega} ( \Phi(u(t)) - \Phi(u_\infty) ) \, \d x$ and $  \langle u(t) - u_\infty, \Phi'(u(t)) \rangle_{L^2(\Omega)}$.
 
 We claim that there exists a $T_\sigma > 0$ (possibly depending on $\sigma$) such that 
 \[ \label{eq:estimate Phi with uPhi'}
  2 \int_{\Omega} ( \Phi(u(t)) - \Phi(u_\infty) ) \, \d x \leq  \langle u(t) - u_\infty, \Phi'(u(t)) \rangle_{L^2(\Omega)} \quad \text{for } t \geq T_\sigma \, .
\]
  By the convergence $u(t) \to u_\infty$ in $H^1(\Omega)$ as $t \to +\infty$ and the compact embedding in one dimension $H^1(\Omega) \subset \subset C(\overline \Omega)$, we deduce that $u(t) \to u_\infty$ \emph{uniformly} as $t \to +\infty$.  Let us distinguish two cases:
  \begin{enumerate}[align=left, label={\itshape Case \arabic*}:] 
    \item  $u_\infty = 0$.  Then there exists $T_\sigma > 0$ such that $|u(t)| < u_* - \frac{1}{\sigma}$ for $t \geq T_\sigma$, \ie, $\Phi(u(t)) = u(t)^2$ (see the definition of $\Phi$ in~\eqref{def:Phik}). Then, observing that $u(t) \Phi'(u(t)) = 2 u(t)^2$, we deduce~\eqref{eq:estimate Phi with uPhi'}.
    \item $|u_\infty| > u_*$. Then there exists $T_\sigma > 0$ such that $|u(t)| > u_*$  for $t \geq T_\sigma$, and thus, by  the definition of $\Phi$ in~\eqref{def:Phik}, both terms in~\eqref{eq:estimate Phi with uPhi'} vanish. 
  \end{enumerate}
  
Combining~\eqref{eq:derivative of Gklambda}--\eqref{eq:estimate Phi with uPhi'}, we obtain that 
\[
  \begin{split}
  & \frac{\d}{\d t} \Big(G_\lambda(u(t), \de_t u(t)) \Big)  \leq -(1-\lambda) \|\de_t u(t)\|_{L^2(\Omega)}^2  - \lambda \|\nabla u(t)\|_{L^2(\Omega)}^2 \\
  &\quad  - 2 \lambda \int_{\Omega} ( \Phi(u(t)) - \Phi(u_\infty) ) \, \d x  - \lambda \langle u(t) - u_\infty, \de_t u(t) \rangle_{L^2(\Omega)} \\
  & \leq - \kappa_\lambda G_\lambda(u(t), \de_t u(t))\, ,\qquad  \text{ for } t >T_\sigma ,
  \end{split}
\]
where $\kappa_\lambda > 0$ is a constant depending on the choice of $\lambda \in (0,1)$. 

%By Gr\"onwall's inequality we deduce that 
%\[
%  G_\lambda(u(t), \de_t u(t)) \leq G_\lambda(u(T), \de_t u(T)) e^{-\kappa_\lambda (t-T)} \, , \quad \text{for } t \geq T_\sigma \, .
%\]
%  
\item (Energy estimate for $G_{\lambda}$ with general initial data) By a density argument (as in the proof of Theorem \ref{prop:well-posedness approximation initial data in energy space}), we can establish the same result for data that are only in the energy space.

\item (Decay of the solution)  From the decay of $G_{\lambda}$, we now want to deduce the decay of the energy $E$ and of the solution itself. 

We split the analysis into two cases as above.

  \begin{enumerate}[align=left, label={\itshape Case \arabic*}:] 
	\item  $u_\infty = 0$.  Then there exists $T_\sigma > 0$ such that $|u(t)| < u_* - \frac{1}{\sigma}$ for $t \geq T:\sigma$, \ie, $\Phi(u(t)) = u(t)^2$. Thus, for $t > T_\sigma$, the problem reduces to a damped Klein--Gordon equation: 
	\[   \label{eq:KG}
	\begin{cases}
	\de^2_{tt} u - \Lap u + \de_t u + 2 u = 0\, , & (t,x) \in (T_\sigma,+\infty) \x \Omega   \, , \\
	\nabla u \cdot \nu = 0  \, , & (t,x) \in  (T_\sigma,+\infty) \x \de \Omega \, ,
	\end{cases}
	\]
	with initial data $u(T_\sigma)$ and $\partial_t u(T_\sigma)$ at time $t = T_\sigma$. 
	Then, we go back to the study of the evolution of $G_\lambda$:  
	\begin{align*}
	\frac{\d}{\d t} G_\lambda(u(t),\partial_t u(t)) &= - (1-\lambda) \|\partial_t u(t) \|^2_{L^2(\Omega)} - \lambda \|\nabla u(t)\|_{L^2(\Omega)} \\ &\qquad - 2 \lambda \|u(t)\|^2_{L^2(\Omega)} - \lambda \langle u(t), \partial_t u(t)\rangle.
	\end{align*}
	Noticing that $2u^2 \ge u^2$ (and thus $-2u^2 \le - u^2$), choosing $\lambda = 1/2$, and applying Gr\"onwall's inequality, we deduce 
	$$G_{1/2}(u(t),\partial_t u(t)) \le e^{-\frac{1}{2}(t-T_\sigma)} G_{1/2}(u(T_\sigma),\partial_t u(T_\sigma)).$$
	Using Young's inequality and the monotonicity of the energy, we deduce 
	$$E(u(t),\partial_t u(t))  \le C \, e^{-\frac{1}{2}(t-T_\sigma)} E(u(T_\sigma),\partial_t u(T_\sigma)) \le C\, e^{-\frac{1}{2}(t-T_\sigma)} E(u_0,u_1)\, , \qquad \text{ for $t> T_\sigma$},$$
	which, in turn, directly implies 
	$$\|\partial_t u(t)\|^2_{L^2(\Omega)} + \|u(t)\|^2_{H^1(\Omega)} \le C e^{-\frac{1}{2}(t-T_\sigma)} \eqqcolon M_\sigma e^{-t/2}\, , \qquad \text{ for $t> T_\sigma$}.$$
	\item $|u_\infty| > u_*$. Then there exists $T_\sigma > 0$  such that $|u(t)| > u_*$  for $t \geq T_\sigma$, and thus, by  the definition of $\Phi$ in~\eqref{def:Phik}, the problem reduces to a damped wave equation 
	\[   \label{eq:DW}
	\begin{cases}
	\de^2_{tt} u - \Lap u + \de_t u = 0 \, , & (t,x) \in (T_\sigma,+\infty) \x \Omega   \, , \\
	\nabla u \cdot \nu = 0  \, , & (t,x) \in  (T_\sigma,+\infty) \x \de \Omega \, ,
	\end{cases}
	\]
		with initial data $u(T_\sigma)$ and $\partial_t u(T_\sigma)$ at time $t = T_\sigma$. 
Then, going back to the evolution of $G_\lambda$, 
	\begin{align*}
	\frac{\d}{\d t} G_\lambda(u(t),\partial_t u(t))  &= - (1-\lambda) \|\partial_t u(t) \|^2_{L^2(\Omega)} - \lambda \|\nabla u(t)\|^2_{L^2(\Omega)} \\ & - \lambda \langle u(t)-u_\infty, \partial_t u(t)\rangle \, .
	\end{align*}
	Choosing $\lambda = 1/2$ and applying Gr\"onwall's inequality and the monotonicity of the energy, we deduce 
	\begin{align*} 
&	E(u(t),\partial_t u(t)) - E(u_\infty,0)  \\ &\le C e^{-\frac{1}{2}(t-T_\sigma)} \Big( E(u(T_\sigma),\partial_t u(T_\sigma)) - E(u_\infty,0) \Big) \\ &\le e^{-\frac{1}{2}(t-T_\sigma)} \Big( E(u_0,u_1) - E(u_\infty,0) \Big)\, , \qquad \text{ for $t> T_\sigma$}.
	\end{align*} 
	
	Now, in order to study the decay of the solution itself, we need to apply Poincaré's inequality,
	\begin{align*}
	c_p  \|u(t) - \bar u(t) \|^2_{L^2(\Omega)} \le \|\nabla u(t)\|^2_{L^2(\Omega)},
	\end{align*}
	which yields---when combined with Young's inequality---
	\begin{align*}
\|\partial_t u(t)\|^2_{L^2(\Omega)} +	\|u(t) - \bar u(t) \|^2_{H^1(\Omega)} \le C e^{-\frac{1}{2}(t-T_\sigma)} \eqqcolon M_\sigma e^{-t/2}\, , \qquad \text{ for $t> T_\sigma$}.
	\end{align*}
	On the other hand, the average $\bar u$ satisfies the ODE
	\begin{align}\label{eq:averageODE-DW}
	\ddot{\bar u}(t) + \dot{\bar u}(t)  = 0, \qquad t >T_\sigma, 
	\end{align}
	with initial data $\bar{u}(T_\sigma)$ and $\dot{\bar{u}}(T_\sigma)$ at $t = T_\sigma$, 
	which can be explicitly solved: 
	\begin{align*}
	\bar u(t) &= \bar u(T_\sigma) + \dot{\bar u}(T_\sigma) -  e^{-(t-T_\sigma)} \dot{\bar u}(T_\sigma)
	\end{align*}
	
	In conclusion, we have 
	\begin{align*}
\|\partial_t u(t)\|_{L^2(\Omega)}^2 +	\|u(t) - (\bar u(T_\sigma) + \dot{\bar u}(T_\sigma))\|^2_{H^1(\Omega)} \le \widetilde M_\sigma e^{-\kappa t} \, , \qquad \text{ for $t> T_\sigma$}.
	\end{align*}
	Since 
	\begin{align*}
	\|u(t) - u_\infty \|_{H^1(\Omega)} \to 0 \qquad \text{ as } t \to + \infty, 
	\end{align*} 	
	we have that $u_\infty \equiv (\bar u(T_\sigma) + \dot{\bar u}(T_\sigma))$. We stress that, integrating \eqref{eq:averageODE-DW}, we have that $\dot{u}(t) + u(t)$ is constant for $t > T_\sigma$; hence, the limit does not depend on the choice of the threshold time  $T_\sigma$.	
\end{enumerate}

\end{steps}

\end{proof}

\appendix

\section{Analysis of the  ODE model}
\label{app:ODE}

In the following, we assume---without loss of generality---that  $u_* = 1$.  For every $\sigma \geq 1$ we let $\Phi$ be as in~\eqref{def:Phik} and we consider the scalar ODE:
\[ \label{eq:ODE}
\begin{cases}
  \ddot z(t) + \dot z(t) + \Phi'(z(t)) = 0 \, , \quad t \in (0,+\infty) \, ,\\
  z(0) = z_0 \, , \\
  \dot z(0) = w_0 \, ,
\end{cases}  
\]
modelling a damped nonlinear spring. We drop the dependence on $\sigma$ not to overburden the notation, but we invite the reader to pay attention to its role. 

By the Cauchy--Lipschitz theorem, there exists a unique $C^1$ solution to~\eqref{eq:ODE}. We can study the long-time behavior of the trajectories of \eqref{eq:ODE} and, in particular, prove an exponential decay rate \emph{independent} of the parameter $\sigma$ (compare Remark \ref{rk:sigma} for the case of the PDE). 

\begin{theorem}[Uniform decay rate for the ODE model]\label{th:ODE}
	Let $(z_0, w_0) \in \R^2$. Let $z \in C^1([0,+\infty))$ the unique solution to \eqref{eq:ODE}. Then there exists constants $\sigma_0, \, R, \, M > 0$, depending on the initial data $(z_0, w_0)$ such that, for every $\sigma \geq \sigma_0$ there exists $z_{\infty} \in [-R,R] \subset \R$ with $\Phi'(z_{\infty}) = 0$, such that 
	\[
	|z(t) - z_{\infty}| \leq M e^{-\frac{1}{2} t} \, .  
	\]
\end{theorem}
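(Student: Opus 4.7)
The plan is to mirror the scheme of Theorem~\ref{prop:decay-rate} in this finite-dimensional setting, while tracking the dependence on $\sigma$ carefully so that the rate $\tfrac12$ emerges uniformly. The first step is a $\sigma$-uniform a priori bound: the energy $\mathcal{E}(t) := \tfrac12\dot z(t)^2 + \Phi(z(t))$ satisfies $\dot{\mathcal{E}} = -\dot z^2 \le 0$, so $|\dot z(t)| \le \sqrt{2\mathcal{E}(0)}$; for $z$, I would exploit that $\Phi' \equiv 0$ on $\{|z|\ge u_*\}$, so any excursion beyond $\pm u_*$ obeys $\ddot z + \dot z = 0$, preventing runaway. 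This yields $|z(t)| \le R := u_* + \sqrt{2\mathcal{E}(0)}$ uniformly in $\sigma$. Convergence $z(t)\to z_\infty$ with $\Phi'(z_\infty)=0$ then follows by transcribing Theorem~\ref{prop:long-time limit}: the functional $\mathcal{J}(t):=\tfrac12 z(t)^2 + z(t)\dot z(t)$ satisfies $\dot{\mathcal{J}} = \dot z^2 - z\Phi'(z)$, the $L^2$-bound on $\dot z$ together with the sign property $z\Phi'(z)\ge 0$ yields a limit for $\mathcal{J}$, whence via l'H\^opital as in Proposition~\ref{prop:H1 bound} one obtains $z(t)^2/2\to\ell$, and the modulus-matching argument of Theorem~\ref{prop:long-time limit} rules out oscillation between distinct equilibria.

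The crucial step is to choose $\sigma_0$ large enough, depending on $(z_0,w_0)$ through $\mathcal{E}(0)$ and $R$, so that the trajectory enters and remains inside one of the two pure linear regimes after a time $T$ that is bounded in $\sigma\ge\sigma_0$. Since $z_\infty \in \{\Phi'=0\} = (-\infty,-u_*]\cup\{0\}\cup[u_*,+\infty)$, I would pick $\sigma_0$ so as to exclude the borderline case $z_\infty = \pm u_*$; a sufficient condition is that $\mathcal{E}(0)$ differ from $\Phi(\pm u_*) = u_*(u_*-1/\sigma)$ by a margin that is uniform once $\sigma \ge \sigma_0$. Once $|z_\infty|\ne u_*$, the convergences $z(t)\to z_\infty$ and $\dot z(t)\to 0$ imply that past some $T$ either $|z(t)|\le u_* - 1/\sigma$ (Case A, when $z_\infty = 0$) or $|z(t)|>u_*$ (Case B, when $|z_\infty|>u_*$). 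The $\sigma$-uniformity of $T$ is what must be argued carefully: the tail $\int_T^{+\infty}\dot z^2\,ds$ is controlled only by $\mathcal{E}(0)$, and the orbit spends negligible time in the transition zone $[u_*-1/\sigma,u_*]$ because the restoring force there is of order $\sigma$ and pushes it across quickly.

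In Case A the ODE reduces to $\ddot z+\dot z+2z=0$, whose characteristic roots $-\tfrac12\pm i\tfrac{\sqrt 7}{2}$ give decay at rate $e^{-t/2}$ with constants controlled by $|z(T)|+|\dot z(T)|\le 2R$. In Case B the ODE reduces to $\ddot z+\dot z=0$, explicitly solved by $z(t)=z(T)+\dot z(T)(1-e^{-(t-T)})$, which identifies $z_\infty = z(T)+\dot z(T)$ and gives the even faster rate $|z(t)-z_\infty|=|\dot z(T)|\,e^{-(t-T)}$. In both cases we obtain $|z(t)-z_\infty|\le M e^{-t/2}$ with $M$ depending only on $(z_0,w_0)$ via $R$. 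The principal obstacle is precisely the $\sigma$-uniformity of the entry time $T$ into the pure regime: in the transition zone $[u_*-1/\sigma,u_*]$ the Lipschitz constant of $\Phi'$ grows like $\sigma$, so naive Gr\"onwall estimates in the nonlinear stage degenerate, and one must leverage the structure of the potential---specifically, that the strong restoring force in the shrinking transition zone forces the orbit across it on a time scale shorter than $1/\sigma$, yielding energy transfer bounds uniform in $\sigma$.
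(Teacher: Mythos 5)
Your skeleton (qualitative convergence via an ODE transcription of Theorem~\ref{prop:long-time limit}, followed by linear decay once the orbit is trapped in a pure regime) is a legitimate alternative to the paper's strategy, which instead solves the ODE explicitly in each of the regions $\{|z|\geq u_*\}$, $\{u_*-\tfrac{1}{\sigma}\leq |z|\leq u_*\}$, $\{|z|\leq u_*-\tfrac{1}{\sigma}\}$ and tracks the orbit case by case ({\itshape Cases I--VII}), with explicit formulas for all transit times. However, your proposal has a genuine gap exactly at the point that distinguishes Theorem~\ref{th:ODE} from a mere ODE analogue of Theorem~\ref{prop:decay-rate}: the uniform-in-$\sigma$ bound on the time $T$ after which the orbit is confined to one linear regime. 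The LaSalle-type argument gives no control on $T$ as a function of $\sigma$ (this is precisely the issue left open in Remark~\ref{rk:sigma} for the PDE), and the heuristic you offer --- that the $O(\sigma)$ restoring force ``pushes the orbit across the transition zone quickly'' --- is both unproven and misleading. In the zone $[u_*-\tfrac{1}{\sigma},u_*]$ (with $u_*=1$) the force is $-\Phi'(z)=-2(\sigma-1)(1-z)<0$, i.e., it points \emph{toward the middle region}: an orbit entering from below with speed $w>0$ is decelerated and reaches $\{z\geq u_*\}$ only if $w$ exceeds roughly $\lambda_\sigma/\sigma\sim\sqrt{2/\sigma}$ (this is the content of the paper's {\itshape Case VII}, where $T_{VII}^\sigma$ is finite only when $w_0-\lambda_\sigma/\sigma>0$); otherwise it is turned back into the middle zone. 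To close your argument you would still need (i) an a priori bound on the \emph{number} of entries into the transition zones --- the paper shows there are at most two such transits and at most one crossing of the middle zone, by inspecting the exit velocities of each explicit solution --- and (ii) quantitative lower bounds on the entry speeds; neither follows from the qualitative convergence.

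A second, more local error: you propose to choose $\sigma_0$ so as to exclude $z_\infty=\pm u_*$, with the sufficient condition that $\mathcal{E}(0)$ differ from $\Phi(\pm u_*)$ by a $\sigma$-uniform margin. This is false and also unnecessary. Take $z_0=2$, $w_0=-1$ (the borderline of the paper's {\itshape Case III}): the orbit is $z(t)=1+e^{-t}$ for every $\sigma$, so $z_\infty=u_*=1$ exactly, while $\mathcal{E}(0)-\Phi(u_*)=\tfrac12$ uniformly in $\sigma$. The statement of Theorem~\ref{th:ODE} does not exclude $z_\infty=\pm u_*$ (indeed $\Phi'(\pm u_*)=0$), and this case is harmless because such an orbit remains in $\{z\geq u_*\}$, where the dynamics is $\ddot z+\dot z=0$ and your Case~B computation applies verbatim; the exclusion step should be deleted rather than repaired.
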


\subsection{Case-by-case analysis}

We shall study the possible behavior of the trajectory depending on the initial data $(z_0,w_0)$.

\medskip 

{\itshape Case I: Initial data $z_0 \geq 1$ and $w_0 > 0$}. (Arguing by symmetry, this is analogous to the case $z_0 \leq -1$ and $w_0 < 0$.) Since $w_0 > 0$, by continuity there exists $\delta > 0$ such that $\dot z(t) > 0$ for $t \in (0,\delta)$. This implies that $z(t)$ is strictly increasing for $t \in (0,\delta)$. In particular, $z(t) > z_0 \geq 1$ and $\Phi'(z(t)) = 0$ for $t \in (0,\delta)$. The trajectory $z(t)$ solves the ODE 
\[  \label{eq:ODE Phi'=0}
\begin{cases}
  \ddot z(t) + \dot z(t) = 0 \, , \quad t \in (0,\delta) \, ,\\
  z(0) = z_0 \, , \\
  \dot z(0) = w_0 \, .
\end{cases}  
\]
The unique solution to this problem is given by
\[   \label{eq:formula zk 1}
z(t) = z_0 + w_0 (1-e^{-t}) \, , \quad \text{for every }  t \in [0,\delta) \, . 
\]  
Notice that the function $z_0 + w_0 (1-e^{-t}) \geq 1$ for every $t \geq 0$, since $w_0 > 0$. Thus equality~\eqref{eq:formula zk 1} is extended to $\delta=+\infty$ and $z(t) \to z_0 + w_0$ as $t \to +\infty$. The rate of convergence is exponential with constants independent of $k$ and depending on the size of the initial data. See Figure~\ref{fig:Case I}.
  
\begin{figure}[H]
  \begin{center}
    \includegraphics[scale=0.35]{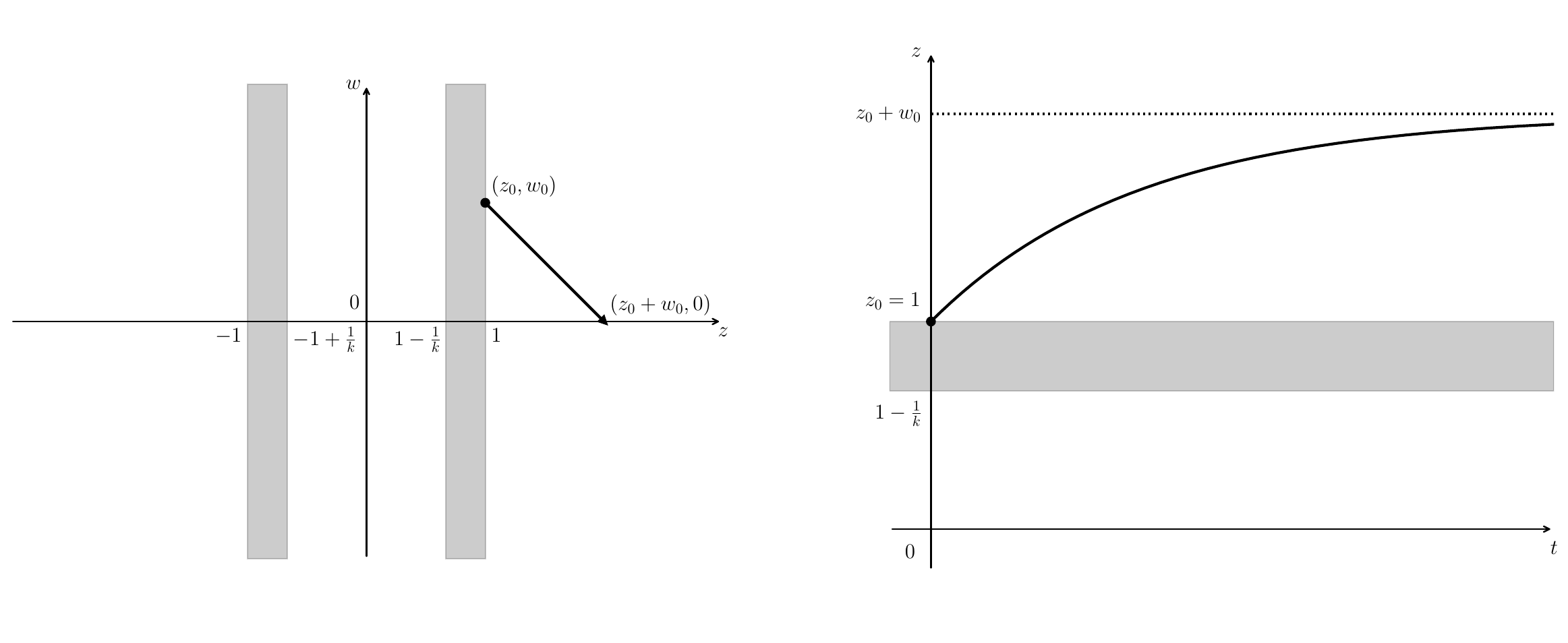}
  \end{center}
  \caption{Example of an evolution described in {\itshape Case I}. On the left, the trajectory is represented in the phase space. On the right, the trajectory is described as a function of time.}
  \label{fig:Case I}
\end{figure}

\vspace{1em}

{\itshape Case II: Initial data $z_0 \geq 1$ and $w_0 = 0$}. (Arguing by symmetry, this is analogous to  the case $z_0 \leq -1$ and $w_0 = 0$.) Since $\Phi'(z_0) = 0$ when $z_0 \geq 1$, the constant trajectory $z(t)  \equiv z_0$ is the unique solution to the ODE~\eqref{eq:ODE}.  See Figure~\ref{fig:Case II}.

\begin{figure}[H]
  \begin{center}
    \includegraphics[scale=0.35]{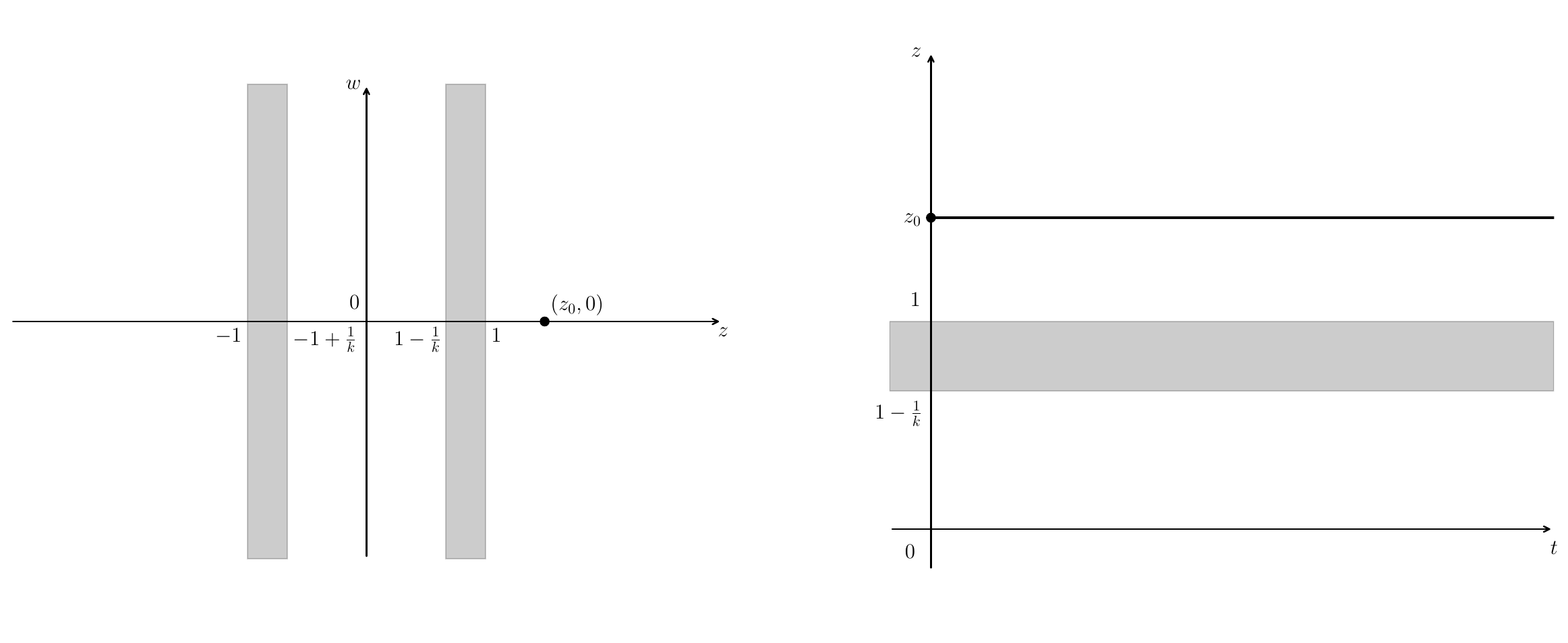}
  \end{center}
  \caption{Example of a constant evolution described in {\itshape Case II}. On the left, the trajectory (a point) is represented in the phase space. On the right, the trajectory is described as a function of time.}
  \label{fig:Case II}
\end{figure}

\vspace{1em}

{\itshape Case III: Initial data $z_0 > 1$ and $w_0 < 0$ with $1-z_0 \leq w_0$}. (Arguing by symmetry, this is analogous to  the case $z_0 < -1$ and $w_0 > 0$ with $w_0 \leq -1-z_0$.) By continuity, there exists $\delta > 0$ such that $z(t) > 1$ for $t \in (0,\delta)$. This implies that $\Phi'(z(t)) = 0$ for $t \in (0,\delta)$. The trajectory $z(t)$ solves the ODE~\eqref{eq:ODE Phi'=0}. The unique solution to this problem is given by
\[   \label{eq:formula zk 2}
z(t) = z_0 + w_0 (1-e^{-t}) = z_0 - |w_0| (1-e^{-t}) \, , \quad \text{for every } t \in [0,\delta) \, .
\] 
We observe that for every $t \geq 0$ 
\[
  z_0 - |w_0| (1-e^{-t}) \geq 1 \iff  e^{-t} \geq \frac{|w_0| + 1 - z_0}{|w_0|} 
\]
and the last inequality holds true since $1-z_0 \leq w_0$. This implies that~\eqref{eq:formula zk 2} can be extended up to $\delta = +\infty$ and $z(t) \to z_0 - |w_0|$ as $t \to +\infty$, with $z_0 - |w_0| \geq 1$. The rate of convergence is exponential with constants independent of $k$. The rate of convergence is exponential with constants independent of $k$ and depending on the size of the initial data. See Figure~\ref{fig:Case III}.

\begin{figure}[H]
  \begin{center}
    \includegraphics[scale=0.35]{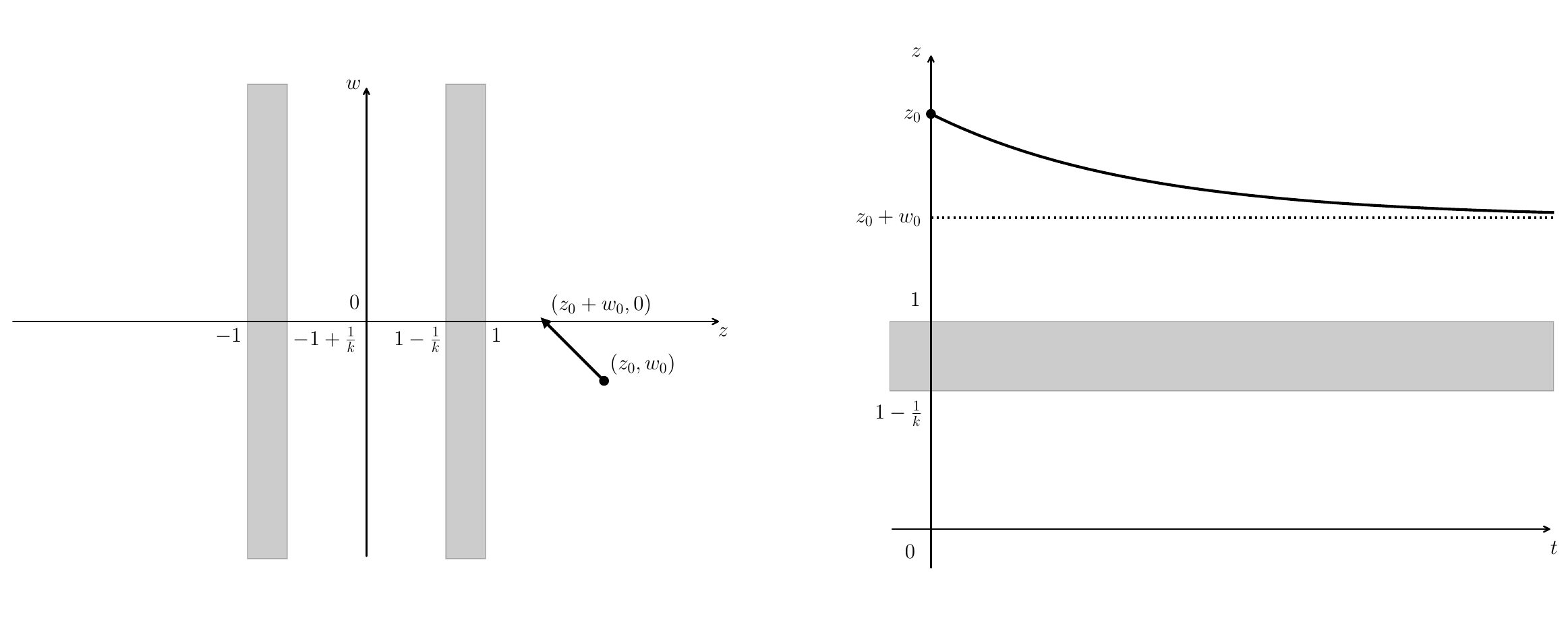}
  \end{center}
  \caption{Example of an evolution described in {\itshape Case III}. On the left, the trajectory is represented in the phase space. On the right, the trajectory is described as a function of time.}
  \label{fig:Case III}
\end{figure}

\vspace{1em}

{\itshape Case IV: Initial data $z_0 > 1$ and $w_0 < 0$ with $ 1-z_0> w_0$}. (Arguing by symmetry, this is analogous to  the case $z_0 < -1$ and $w_0 > 0$ with $w_0 > -1-z_0$.) As in the previous case, there exists $\delta > 0$ such that 
\[   \label{eq:formula zk 3}
z(t) = z_0 + w_0 (1-e^{-t}) = z_0 - |w_0| (1-e^{-t}) \, , \quad \text{for every } t \in [0,\delta) \, .
\] 
Let us find the first time $T_{IV}$ at which the function $z_0 - |w_0| (1-e^{-t})$ reaches the value 1 by solving 
\[
  z_0 - |w_0| (1-e^{-T_{IV}}) = 1 \iff  e^{-T_{IV}} = \frac{|w_0| + 1 - z_0}{|w_0|}  \iff T_{IV} = \log\Big(\frac{|w_0|}{|w_0| + 1 - z_0} \Big)\, .
\]
This implies that~\eqref{eq:formula zk 3} can be extended up to $\delta = T_{IV} = \log\Big(\frac{|w_0|}{|w_0| + 1 - z_0} \Big) > 0$. We stress that $T_{IV}$ depends only on the size of the initial data and is independent of $k$. The exiting speed at time $T_{IV} = \log\Big(\frac{|w_0|}{|w_0| + 1 - z_0} \Big)$ is $\dot z(T_{IV}) = - |w_0| e^{-T_{IV}} < 0$. The future of the evolution after time $T_{IV}$ is obtained by restarting the evolution at time $T_{IV}$ and by considering {\itshape Case~V}. See Figure~\ref{fig:Case IV}.

\begin{figure}[H]
  \begin{center}
    \includegraphics[scale=0.35]{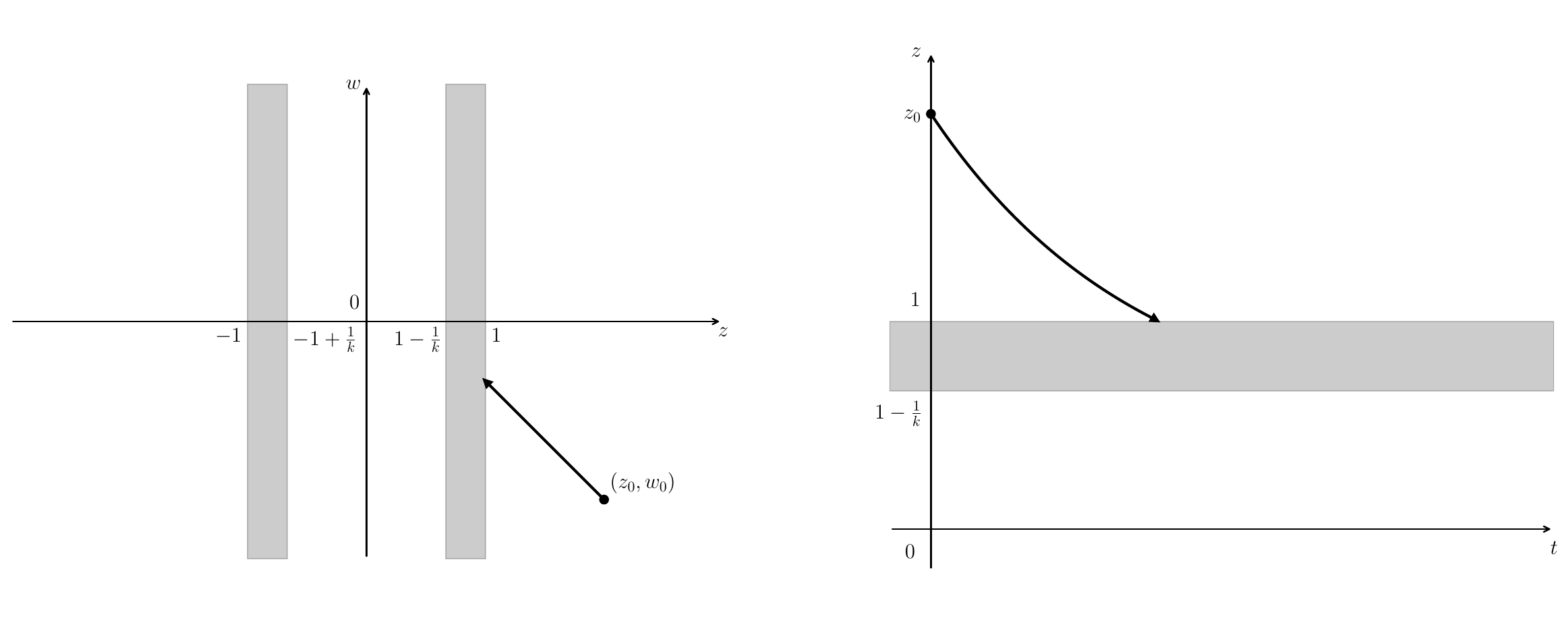}
  \end{center}
  \caption{Example of an evolution described in {\itshape Case IV}. On the left, the trajectory  is represented in the phase space. On the right, the trajectory is described as a function of time.}
  \label{fig:Case IV}
\end{figure}

\vspace{1em}

{\itshape Case V: Initial data $z_0 = 1$ and $w_0 < 0$}. (Arguing by symmetry, this is analogous to  the case $z_0 = -1$ and $w_0 > 0$.) By continuity there exists $\delta > 0$ such that $\dot z(t) < 0$ for $t \in (0,\delta)$. This implies that $z(t)$ is strictly decreasing for $t \in (0,\delta)$. For $\delta > 0$ small enough, we have, in particular, that $1-\frac{1}{\sigma} < z(t) < z_0 = 1$. This implies $\Phi'(z(t)) = 2 (\sigma-1) (1-z(t))$ for $t \in (0,\delta)$ and the trajectory $z(t)$ solves the ODE
\[  \label{eq:ODE Phi'=2 (k-1) (1-z)}
\begin{cases}
  \ddot z(t) + \dot z(t) + 2 (\sigma-1) (1-z(t))= 0 \, , \quad t \in (0,\delta) \, ,\\
  z(0) = z_0 = 1\, , \\
  \dot z(0) = w_0 \, .
\end{cases}  
\]
The unique solution to this problem is given by 
\[ \label{eq:zk exp}
z(t) =  1 + \frac{|w_0|}{\lambda_\sigma + \mu} e^{-\lambda_\sigma t} - \frac{|w_0|}{\lambda_\sigma + \mu} e^{\mu t} \, , \quad \text{for } t \in [0,\delta) \, ,
\] 
where
\[ \label{eq:lambdak muk}
\lambda_\sigma = \frac{1 + \sqrt{1 + 8(k-1)}}{2} > 0  \, , \quad \mu = \frac{- 1 + \sqrt{1 + 8(k-1)}}{2} > 0 \, .
\] 
Let us estimate the first time $T_V^\sigma$ such that the function defined in~\eqref{eq:zk exp} reaches the value $1-\frac{1}{\sigma}$. Let us observe that
\[
  1 + \frac{|w_0|}{\lambda_\sigma + \mu} e^{-\lambda_\sigma t} - \frac{|w_0|}{\lambda_\sigma + \mu} e^{\mu t} < 1 + \frac{|w_0|}{\lambda_\sigma + \mu}   - \frac{|w_0|}{\lambda_\sigma + \mu} e^{\mu t} 
\]
for every $t > 0$. At time $t = 0$ both sides of the inequality are equal to 1. Let us solve 
\[
  1 + \frac{|w_0|}{\lambda_\sigma + \mu}   - \frac{|w_0|}{\lambda_\sigma + \mu} e^{\mu t}  = 1 - \frac{1}{\sigma} \iff  t   = \frac{1}{\mu} \log \Big( 1    +   \frac{\lambda_\sigma + \mu}{k |w_0|} \Big) \, .
\]
This implies that the function $1 + \frac{|w_0|}{\lambda_\sigma + \mu} e^{-\lambda_\sigma t} - \frac{|w_0|}{\lambda_\sigma + \mu} e^{\mu t}$ reaches $1-\frac{1}{\sigma}$ in a time $T_{V}^\sigma$ satisfying
\[
  0 < T_{V}^\sigma < \frac{1}{\mu} \log \Big( 1    +   \frac{\lambda_\sigma + \mu}{k |w_0|} \Big) \, .
\]
We remark that $\frac{1}{\mu}\log ( 1    +   \frac{\lambda_\sigma + \mu}{k |w_0|} ) \to 0$ as $k \to +\infty$. In particular, $T_{V}^\sigma < 1$ for $k$ large enough, depending only on the size of the initial data. The exiting speed at time $T_{V}^\sigma$ is $\dot z(T_{V}^\sigma) = -\frac{\lambda_\sigma |w_0|}{\lambda_\sigma + \mu} e^{-\lambda_\sigma T_{V}^\sigma} - \frac{\mu |w_0|}{\lambda_\sigma + \mu} e^{\mu T_{V}^\sigma} < 0$. See Figure~\ref{fig:Case V}. The future of the evolution after time $T_V^\sigma$ is obtained by restarting the evolution at time $T_V^\sigma$ and by considering {\itshape Case VI}.

\begin{figure}[H]
  \begin{center}
    \includegraphics[scale=0.35]{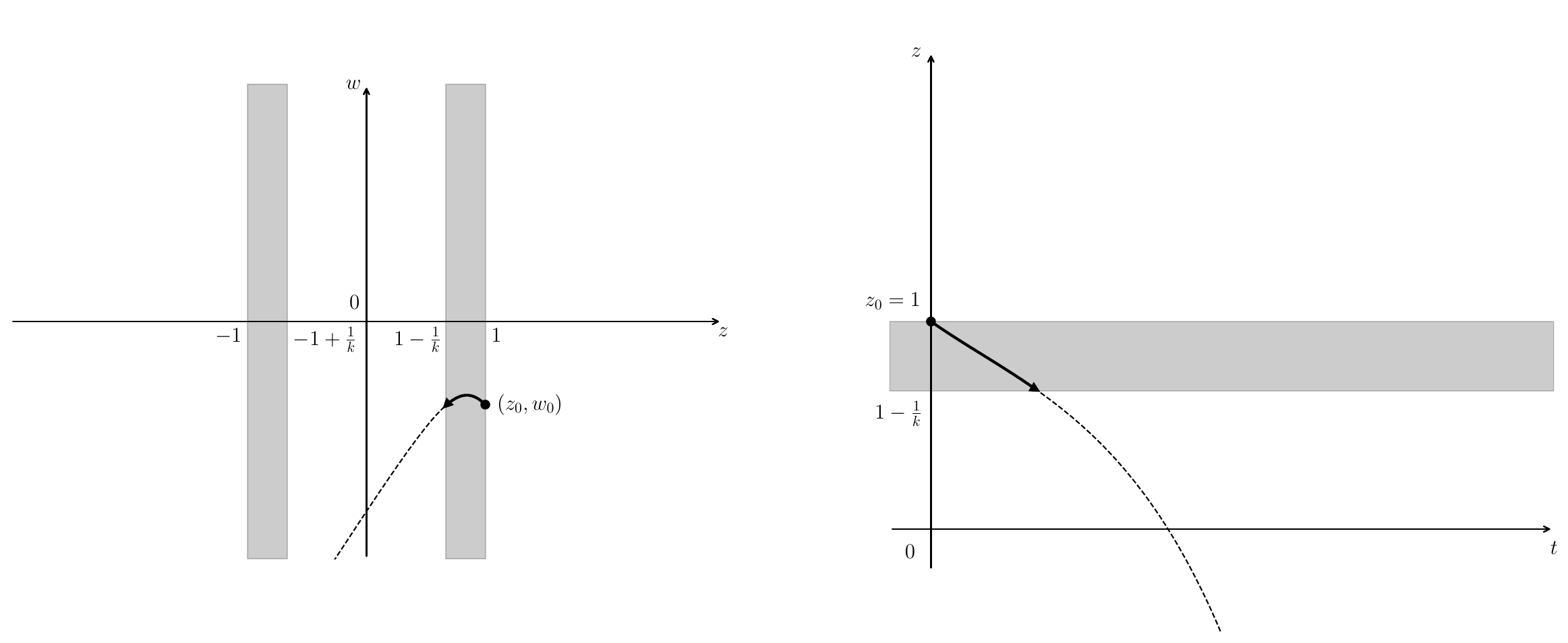}
  \end{center}
  \caption{Example of an evolution described in {\itshape Case V}. On the left, the trajectory  is represented in the phase space. On the right, the trajectory is described as a function of time. The solid line depicts $z(t)$ while the dashed line represents the function $1 + \frac{|w_0|}{\lambda_\sigma + \mu} e^{-\lambda_\sigma t} - \frac{|w_0|}{\lambda_\sigma + \mu} e^{\mu t}$ beyond time $T_{V}^\sigma$.}
  \label{fig:Case V}
\end{figure}

\vspace{1em}

We stress that the cases including the initial data satisfying $1 - \frac{1}{\sigma} \leq z_0 < 1$ or $-1 < z_0 \leq - 1 + \frac{1}{\sigma}$ are unnecessary, since for $k$ large enough (depending on the initial data), these are not satisfied. Thus, we consider directly the case $-1+\frac{1}{\sigma} < z_0 < 1-\frac{1}{\sigma}$. 

\vspace{1em}

{\itshape Case VI: Initial data $-1+\frac{1}{\sigma} \leq z_0 \leq 1-\frac{1}{\sigma}$ and $w_0 \in \R$}. Several possibilities might occur: 
\begin{itemize}
  \item If $-1+\frac{1}{\sigma} < z_0 < 1-\frac{1}{\sigma}$, by continuity there exists $\delta > 0$ such that $-1+\frac{1}{\sigma} < z(t) < 1-\frac{1}{\sigma}$.  
  \item If $z_0 = 1 - \frac{1}{\sigma}$ and $w_0 > 0$, we fall into {\itshape Case VII} and we do not study this case here as the trajectory immediately exits from the interval $[-1+\frac{1}{\sigma}, 1-\frac{1}{\sigma}]$.
  \item If $z_0 = 1 - \frac{1}{\sigma}$ and $w_0 = 0$, then there exists $\delta > 0$ such that $\dot z(t) \neq 0$ for $t \in (0,\delta)$ (for, otherwise, $1 - \frac{1}{\sigma}$ would be an equilibrium point). It cannot occur that $\dot z(t) > 0$. Indeed, this would imply that $1-\frac{1}{\sigma} < z(t) < 1$ for $t \in (0,\delta)$ with $\delta$ small enough. This would give that $\Phi'(z(t)) = 2 (k-1) (1-z(t))$ for $t \in (0,\delta)$ and the trajectory $z(t)$ would solve the ODE
  \[   
  \begin{cases}
    \ddot z(t) + \dot z(t) + 2 (k-1) (1-z(t))= 0 \, , \quad t \in (0,\delta) \, ,\\
    z(0) = z_0 = 1 - \frac{1}{\sigma}\, , \\
    \dot z(0) = 0 \, .
  \end{cases}  
  \]
  The explicit solution to this problem is
  \[  
  z(t) =  1 - \frac{\frac{\mu }{\sigma}}{\lambda_\sigma + \mu }  e^{-\lambda_\sigma t} - \frac{\frac{\lambda_\sigma}{\sigma}}{ \lambda_\sigma + \mu} e^{\mu t} \, , \quad \text{for } t \in [0,\delta) \, ,
  \] 
  where $\lambda_\sigma$ and $\mu$ are defined as in~\eqref{eq:lambdak muk}. However, the derivative of this function is 
  \[
  \dot z(t) =  \frac{\frac{\lambda_\sigma \mu }{\sigma}}{\lambda_\sigma + \mu } \Big( e^{-\lambda_\sigma t} - e^{\mu t} \Big) < 0 \, , \quad \text{for } t \in (0,\delta) \, ,
  \]
  which would lead to a contradiction. We conclude that $\dot z(t) < 0$ for $t \in (0,\delta)$, hence $z(t)$ is strictly decreasing and, in particular, $-1+\frac{1}{\sigma} < z(t) < z_0 = 1-\frac{1}{\sigma}$ for $t \in (0,\delta)$  with $\delta$ small enough.
  \item If $z_0 = 1 - \frac{1}{\sigma}$ and $w_0 < 0$, then by continuity there exists $\delta > 0$ such that $\dot z(t) < 0$ for $t \in (0,\delta)$. Hence $z(t)$ is strictly decreasing and, in particular, $-1+\frac{1}{\sigma} < z(t) < z_0 = 1-\frac{1}{\sigma}$ for $t \in (0,\delta)$  with $\delta$ small enough.
  \item If $z_0 = - 1 + \frac{1}{\sigma}$ and $w_0 < 0$, we fall into {\itshape Case VII} and we do not study this case here.
  \item If $z_0 = - 1 + \frac{1}{\sigma}$ and $w_0 = 0$, this is analogous to the case $z_0 = 1 - \frac{1}{\sigma}$ and $w_0 = 0$.
  \item If $z_0 = - 1 + \frac{1}{\sigma}$ and $w_0 > 0$, this is analogous to the case $z_0 = 1 - \frac{1}{\sigma}$ and $w_0 < 0$.
\end{itemize}
In all cases, $-1+\frac{1}{\sigma} < z(t) < z_0 = 1-\frac{1}{\sigma}$ for $t \in (0,\delta)$, thus $\Phi'(z(t)) = 2 z(t)$ and $z(t)$ solves the ODE 
\[  \label{eq:ODE Phi'=2 z}
\begin{cases}
  \ddot z(t) + \dot z(t) + 2 z(t)= 0 \, , \quad t \in (0,\delta) \, ,\\
  z(0) = z_0 \, , \\
  \dot z(0) = w_0 \, .
\end{cases}  
\]
The unique solution to this problem is given by 
\[ \label{eq:formula zk 4}
z(t) = z_0 e^{-\frac{1}{2} t} \cos( \omega t)  + \frac{2 w_0 +  z_0   }{2\omega} e^{-\frac{1}{2} t} \sin( \omega t)   \, , \quad \text{for every } t \in [0,\delta) \, .
\]
where $\omega = \frac{\sqrt{7}}{2}$. We distinguish several possibilities, depending on the extreme values attained by the function $z_0 e^{-\frac{1}{2} t} \cos( \omega t)  + \frac{2 w_0 +  z_0   }{2\omega} e^{-\frac{1}{2} t} \sin( \omega t)$ for $t \in [0,+\infty)$. The explicit values of the extreme values attained by this function are not relevant for the discussion. However, it is possible to compute them by considering the first time where the derivative vanishes. The first critical point is $\bar t =  \frac{1}{\omega}\arctan\Big( \frac{4\omega  w_0 }{ (4  \omega^2 + 1) z_0 + 2 w_0    } \Big)$ or $\bar t =  \frac{1}{\omega}\arctan\Big( \frac{4\omega  w_0 }{ (4  \omega^2 + 1) z_0 + 2 w_0    } \Big) + \frac{\pi}{\omega}$ (depending on the sign of $w_0$). All  critical points are of the form $t = \bar t + \frac{n}{\omega} \pi$, $n \in \N$. The values attained at these critical points are 
\[
  e^{-\frac{1}{2}  \frac{n}{\omega}} \Big( z_0 e^{-\frac{1}{2} \bar t} \cos( \omega \bar t)  + \frac{2 w_0 +  z_0   }{2\omega} e^{-\frac{1}{2} \bar t} \sin( \omega \bar t)\Big) \, .
\]
Note that their absolute value is therefore damped as $n$ increases. For this reason, the extreme values are attained either at $t = 0$ and $t = \bar t$ or at $t = \bar t$ and $t = \bar t + \frac{\pi}{\omega}$.
\begin{itemize}
  \item The extreme value at time $t = \bar t$ lies in the interval $( -1+\frac{1}{\sigma}, 1-\frac{1}{\sigma})$. See Figure~\ref{fig:Case VI-a}. Since at time $t = 0$ we already have that $-1+\frac{1}{\sigma} < z_0 < 1-\frac{1}{\sigma}$ and all future extreme values are damped, this implies that the function $z_0 e^{-\frac{1}{2} t} \cos( \omega t)  + \frac{2 w_0 +  z_0   }{2\omega} e^{-\frac{1}{2} t} \sin( \omega t)$ lies in the interval $(-1+\frac{1}{\sigma}, 1-\frac{1}{\sigma})$ for every $t \in [0,+\infty)$. In this case, formula~\eqref{eq:formula zk 4} can be extended up to $\delta = +\infty$. The solution satisfies $z(t) \to 0$ as $t \to +\infty$. The rate of convergence is exponential with constants independent of $k$ and depending on the size of the initial data. 
   
  \begin{figure}[H]
    \begin{center}
      \includegraphics[scale=0.35]{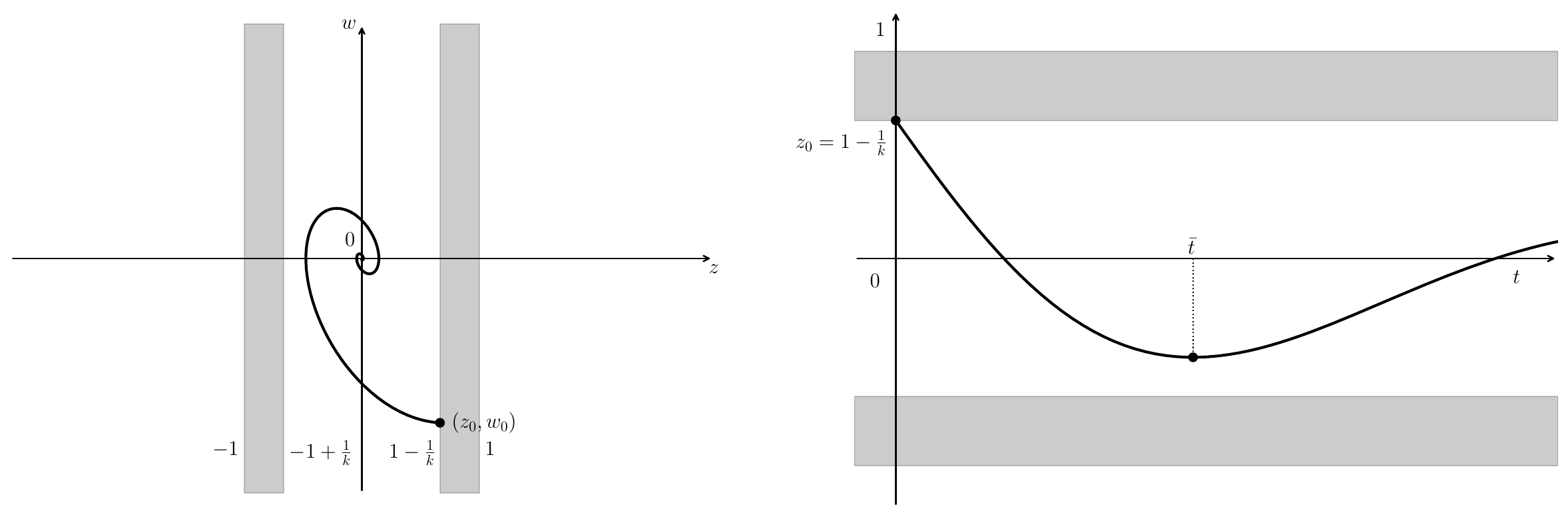}
    \end{center}
    \caption{Example of an evolution described in {\itshape Case VI} when the extreme value at time $t = \bar t$ lies in the interval $( -1+\frac{1}{\sigma}, 1-\frac{1}{\sigma})$. On the left, the trajectory  is represented in the phase space. On the right, the trajectory is described as a function of time.}
    \label{fig:Case VI-a}
  \end{figure}

  \item The extreme value at time $t = \bar t$ lies outside the interval $( -1+\frac{1}{\sigma}, 1-\frac{1}{\sigma})$. See Figure~\ref{fig:Case VI-b}. This means that either the global minimum is less than or equal to $-1+\frac{1}{\sigma}$ or the global maximum is greater that or equal to $1-\frac{1}{\sigma}$. In either case, we extend~\eqref{eq:formula zk 4} up to $\delta = T_{VI}^\sigma$, where $T_{VI}^\sigma$ is the first exiting time such that the function $|z_0 e^{-\frac{1}{2} t} \cos( \omega t)  + \frac{2 w_0 +  z_0   }{2\omega} e^{-\frac{1}{2} t} \sin( \omega t)|$ reaches the value $1-\frac{1}{\sigma}$. Note that $T_{VI}^\sigma$ is bounded by $\bar t$. At time $T_{VI}^\sigma$, the sign of the exiting speed $\dot z(T_{VI}^\sigma)$ depends on whether $z(T_{VI}^\sigma) = 1 - \frac{1}{\sigma}$ (positive, in this case) or $z(T_{VI}^\sigma) = -1 + \frac{1}{\sigma}$  (negative, in this case). After time $T_{VI}^\sigma$, the future of the evolution is obtained by restarting the evolution at time $T_{VI}^\sigma$ and by considering {\itshape Case VII}.
  
  \begin{figure}[H]
    \begin{center}
      \includegraphics[scale=0.35]{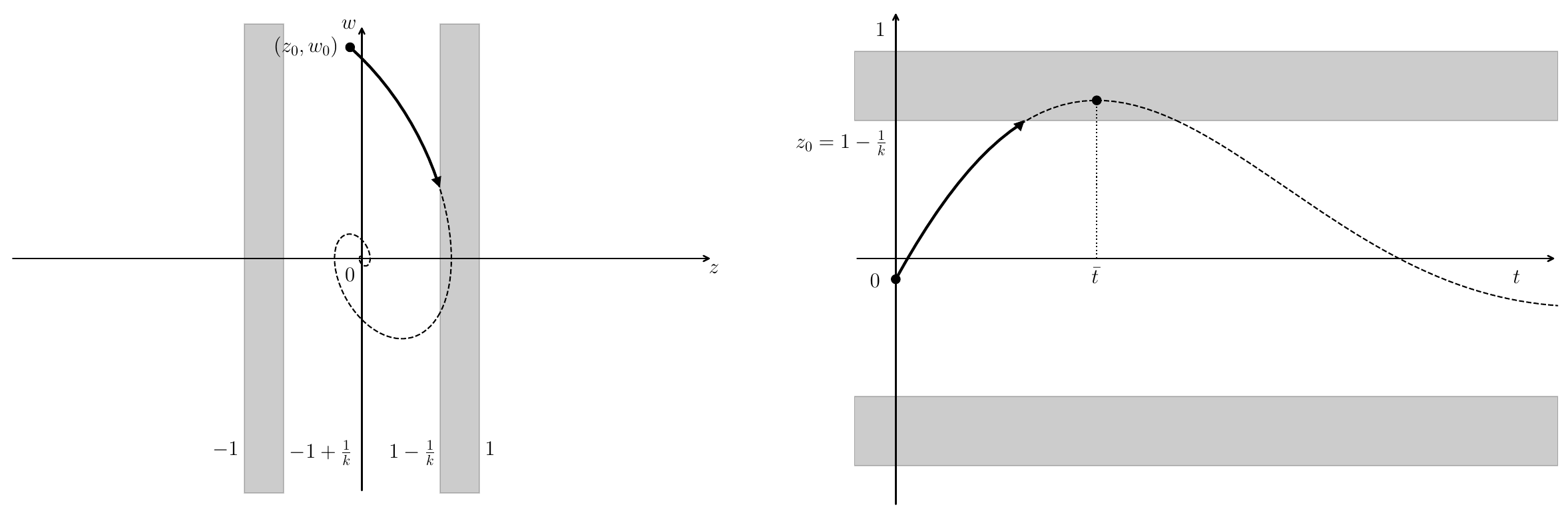}
    \end{center}
    \caption{Example of an evolution described in {\itshape Case VI} when the extreme value at time $t = \bar t$ lies outside the interval $( -1+\frac{1}{\sigma}, 1-\frac{1}{\sigma})$. On the left, the trajectory  is represented in the phase space. On the right, the trajectory is described as a function of time. The solid line depicts $z(t)$ while the dashed line represents the function $z_0 e^{-\frac{1}{2} t} \cos( \omega t)  + \frac{2 w_0 +  z_0   }{2\omega} e^{-\frac{1}{2} t} \sin( \omega t)$  beyond time $T_{VI}^\sigma$.}
    \label{fig:Case VI-b}
  \end{figure}
\end{itemize}

\vspace{1em}
  
{\itshape Case VII: Initial data $z_0 = 1 - \frac{1}{\sigma}$ and $w_0 > 0$.} (Arguing by symmetry, this is analogous to  the case $z_0 = - 1 + \frac{1}{\sigma}$ and $w_0 < 0$.) Since $w_0 > 0$, by continuity there exists $\delta > 0$ such that $\dot z(t) > 0$ for $t \in (0,\delta)$. Hence, $z(t)$ is strictly increasing and, in particular, $1 - \frac{1}{\sigma} = z_0 < z(t) < 1$ for $t \in (0,\delta)$ if $\delta$ is small enough. This implies $\Phi'(z(t)) = 2 (\sigma-1) (1-z(t))$ for $t \in (0,\delta)$ and the trajectory $z(t)$ solves the ODE
\[  \label{eq:ODE Phi'=2 (11) (1-z) 2}
\begin{cases}
  \ddot z(t) + \dot z(t) + 2 (\sigma-1) (1-z(t))= 0 \, , \quad t \in (0,\delta) \, ,\\
  z(0) = z_0 = 1 - \frac{1}{\sigma}\, , \\
  \dot z(0) = w_0 \, .
\end{cases}  
\]
The explicit solution to this problem is
\[ \label{eq:zk exp 2}
z(t) =  1 - \frac{w_0  + \frac{\mu }{\sigma}}{\lambda_\sigma + \mu }  e^{-\lambda_\sigma t} + \frac{ w_0  - \frac{\lambda_\sigma}{\sigma}}{ \lambda_\sigma + \mu} e^{\mu t} \, , \quad \text{for } t \in [0,\delta) \, ,
\] 
where $\lambda_\sigma$ and $\mu$ are defined as in~\eqref{eq:lambdak muk}. Let us estimate the time $T_{VII}^\sigma$ needed by the function $1 - \frac{w_0  + \frac{\mu }{\sigma}}{\lambda_\sigma + \mu }  e^{-\lambda_\sigma t} + \frac{ w_0  - \frac{\lambda_\sigma}{\sigma}}{ \lambda_\sigma + \mu} e^{\mu t} $ to reach the value 1 by solving 
\[
  1 - \frac{w_0  + \frac{\mu }{\sigma}}{\lambda_\sigma + \mu }  e^{-\lambda_\sigma T_{VII}^\sigma} + \frac{ w_0  - \frac{\lambda_\sigma}{\sigma}}{ \lambda_\sigma + \mu} e^{\mu T_{VII}^\sigma}  = 1 \iff    T_{VII}^\sigma = \frac{1}{\lambda_\sigma + \mu}\log \Big( \frac{w_0  + \frac{\mu }{\sigma}}{ w_0  - \frac{\lambda_\sigma}{\sigma}}     \Big) \, .
\] 
Note that $w_0 - \frac{\lambda_\sigma}{\sigma} > 0$ for $k$ large enough depending on the size of the initial data. We remark that $T_{VII}^\sigma \to 0$ as $k \to +\infty$, hence $T_{VII}^\sigma < 1$ for $k$ large enough, depending only on the size of the initial data. The exiting speed is $\dot z(T_{VII}^\sigma) = \lambda_\sigma \frac{w_0  + \frac{\mu }{\sigma}}{\lambda_\sigma + \mu }  e^{-\lambda_\sigma T_{VII}^\sigma} + \mu \frac{ w_0  - \frac{\lambda_\sigma}{\sigma}}{ \lambda_\sigma + \mu} e^{\mu T_{VII}^\sigma}> 0$. Thus the future evolution is obtained by restarting the evolution at time $T_{VII}^\sigma$ and by considering {\itshape Case I}. 

\begin{figure}[H]
  \begin{center}
    \includegraphics[scale=0.35]{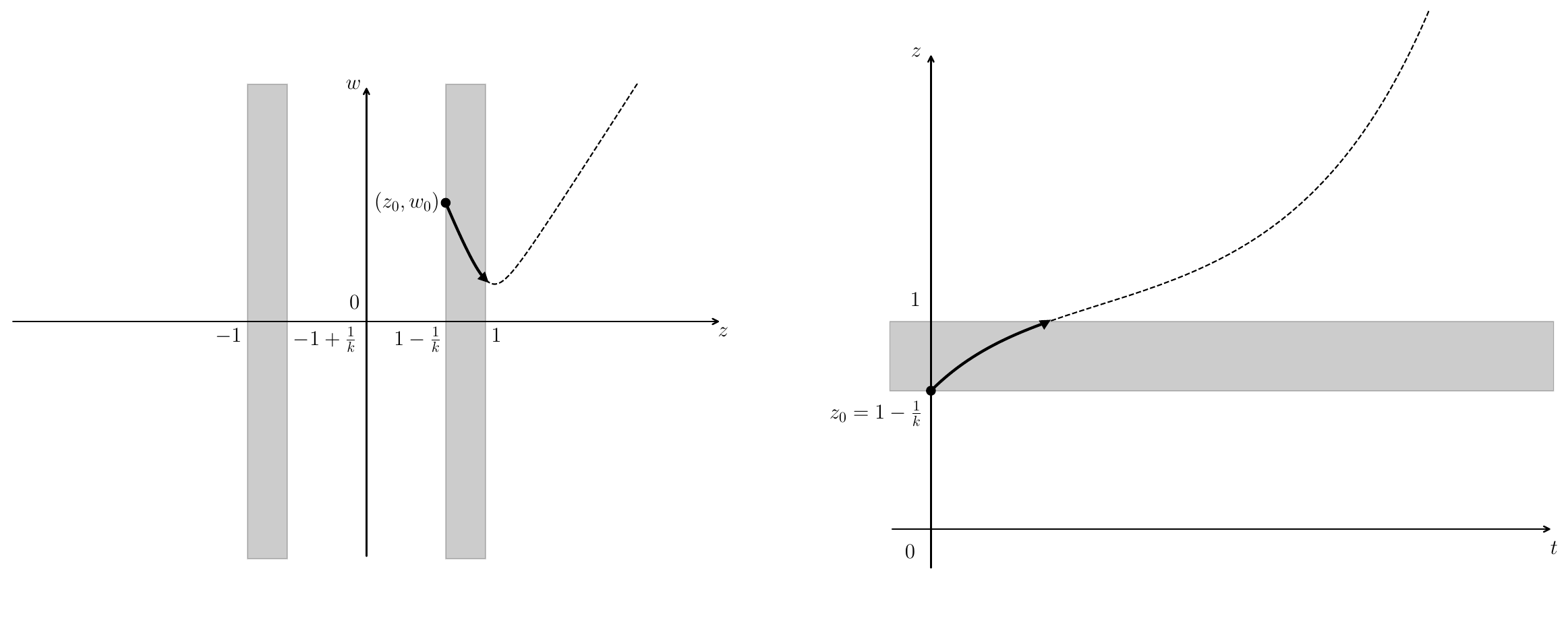}
  \end{center}
  \caption{Example of an evolution described in {\itshape Case VII}. On the left, the trajectory  is represented in the phase space. On the right, the trajectory is described as a function of time. The solid line depicts $z(t)$ while the dashed line represents the function $1 - \frac{w_0  + \frac{\mu }{\sigma}}{\lambda_\sigma + \mu }  e^{-\lambda_\sigma t} + \frac{ w_0  - \frac{\lambda_\sigma}{\sigma}}{ \lambda_\sigma + \mu} e^{\mu t}$ beyond time $T_{VII}^\sigma$.}
  \label{fig:Case VII}
\end{figure}

\subsection{Proof of the decay rate}

Combining all the cases discussed above, we can prove Theorem \ref{th:ODE}

\begin{proof}[Proof of Theorem \ref{th:ODE}]
By the previous discussion, we deduce that $z(t)$ transits in the intervals $(1-\frac{1}{\sigma},1)$ and $(-1,-1+\frac{1}{\sigma})$ at most twice. The time spent in these intervals is either $T_{V}^\sigma$ or $T_{VII}^\sigma$. In either case, the time spent in these intervals is bounded by a constant depending only on the size of the initial data. The time spent to do a transition from $(1-\frac{1}{\sigma},1)$ to $(-1,-1+\frac{1}{\sigma})$  (or viceversa) is $T_{VI}^\sigma$, which is bounded by the size of the initial data. This transition may occur only once. Then the solution $z(t)$ lies in only one of the intervals $[-1+\frac{1}{\sigma}, 1-\frac{1}{\sigma}]$ $[1,+\infty)$, $(-\infty,1]$ for the rest of the evolution, converging to equilibrium exponentially fast. The worst exponential rate is $\frac{1}{2}$. The constant $M$ depends on the bounds of the times $T_{V}^\sigma$, $T_{VI}^\sigma$, and $T_{VII}^\sigma$.
\end{proof}

\vspace{5mm}
\section*{Acknowledgments}

G. M. Coclite, N. De Nitti, F. Maddalena, and G. Orlando are members of the Gruppo Nazionale per l'Analisi Matematica, la Probabilità e le loro Applicazioni (GNAMPA) of the Istituto Nazionale di Alta Matematica (INdAM).

G. M. Coclite, F. Maddalena, and G. Orlando were supported by the Italian Ministry of University and Research under the Programme “Department of Excellence” Legge 232/2016 (Grant No. CUP - D93C23000100001).

G. M. Coclite, F. Maddalena, and G. Orlando have received funding from the INdAM - GNAMPA 2023 Project Codice CUP E53C22001930001, ``Modelli variazionali ed evolutivi per problemi di adesione e di contatto''.

G. M. Coclite, F. Maddalena, and G. Orlando have been partially supported by the Research Project of National Relevance ``Evolution problems involving interacting scales'' granted by the Italian Ministry of Education, University and Research (MIUR Prin 2022, project code 2022M9BKBC, Grant No. CUP D53D23005880006).

G. M. Coclite has been partially supported by the Project  funded  under  the  National  Recovery  and  Resilience  Plan  (NRRP),  Mission  4 Component  2  Investment  1.4 -Call  for  tender  No.  3138  of  16/12/2021
of  Italian  Ministry  of  University and Research funded by the European Union -NextGenerationEUoAward  Number:  
CN000023,  Concession  Decree  No.  1033  of  17/06/2022  adopted  by  the  Italian Ministry of University and Research, CUP: D93C22000410001, Centro Nazionale per la Mobilit\`a Sostenibile
and the Italian
Ministry of Education, University and Research under the Programme Department of Excellence Legge 232/2016 (Grant No. CUP - D93C23000100001).

G. Orlando has been supported by the project ``Approccio integrato e predittivo per il controllo della criminalità marittima'' in the program ``Research for Innovation'' (REFIN) - POR Puglia FESR FSE 2014-2020, Codice CUP: D94I20001410008.

E. Zuazua has been funded by the Alexander von Humboldt-Professorship program, the ModConFlex Marie Curie Action, HORIZON-MSCA-2021-DN-01, the COST Action MAT-DYN-NET, the Transregio 154 Project ``Mathematical Modelling, Simulation and Optimization Using the Example of Gas Networks'' of the DFG, grants PID2020-112617GB-C22 and TED2021-131390B-I00 of MINECO (Spain), and by the Madrid Goverment -- UAM Agreement for the Excellence of the University Research Staff in the context of the V PRICIT (Regional Programme of Research and Technological Innovation).

\vspace{5mm}
\bibliographystyle{abbrv}  
\bibliography{bibliography.bib} 
\vfill 
\end{document}